\documentclass[10pt]{amsart}

\usepackage{amssymb}
\usepackage{amsmath}
\usepackage{amsthm}
\usepackage{latexsym}
\usepackage{amssymb}
\usepackage{mathtools}
\usepackage[english]{babel}
\usepackage{enumerate}
\usepackage{graphicx} 
\usepackage{float}
\usepackage{mathrsfs}
\usepackage[left=2.5cm,right=2.5cm,top=2.75cm,bottom=2.75cm]{geometry}
\usepackage{tikz}
\usetikzlibrary{arrows,automata,shapes,calc,patterns}

\usepackage[colorlinks=true,urlcolor=green!50!black,
citecolor=red!60!black,linkcolor=blue!75!black,linktocpage,pdfpagelabels,
bookmarksnumbered,bookmarksopen]{hyperref}
\usepackage{ulem}

\newtheorem*{unthm*}{Theorem}
\newtheorem{thm}{Theorem}[section]
\newtheorem{prop}[thm]{Proposition}
\newtheorem{lem}[thm]{Lemma}

\theoremstyle{definition}

\theoremstyle{remark}
\newtheorem{remark}{Remark}[section]
\theoremstyle{notation}
\newtheorem*{notation}{Notation}

\numberwithin{equation}{section}

\newcommand{\R}{\mathbb{R}}
\newcommand{\RN}{\mathbb{R}^N}
\newcommand{\N}{\mathbb{N}}

\newcommand{\E}{\mathbb{E}}
\newcommand{\Cc}{\mathbb{C}}

\renewcommand{\epsilon}{\varepsilon}
\newcommand{\wh}{\widehat}

\newcommand{\cF}{{\mathcal F}}

\begin{document}

\title[Some remarks on a minimization problem associated to a $4$-NLS]{Some remarks on a minimization problem associated to a fourth order nonlinear Schr\"odinger equation}

\thanks{This work has been carried out in the framework of the project NONLOCAL (ANR-14-CE25-0013), funded by the French National Research Agency (ANR).}

\author[N. Boussa\"id, A. J. Fern\'andez and L. Jeanjean]{Nabile Boussa\"id, Antonio J. Fern\'andez and Louis Jeanjean}

\address{
\vspace{-0.4cm}
\newline
\textbf{{\small Nabile Boussa\"id}}
\newline \indent Laboratoire de Math\' ematiques (UMR 6623), Universit\' e Bourgogne Franche-Comt\'e,
\newline \indent 16, Route de Gray 25030 Besan\c con Cedex, France}
\email{nabile.boussaid@univ-fcomte.fr}

\address{
\vspace{-0.45cm}
\newline
\textbf{{\small Antonio J. Fern\'andez}} 
\newline \indent Universit\'e Polytechnique Hauts-de-France, EA 4015-LAMAV-FR CNRS 2956, F-59313 Valenciennes, France \&
\newline \indent Laboratoire de Math\'ematiques (UMR 6623), Universit\'e de Bourgogne Franche-Comt\'e, 
\newline \indent 16, Route de Gray 25030 Besan\c con Cedex, France \&
\newline \indent Department of Mathematical Sciences, University of Bath, Bath BA2 7AY, United Kingdom,}
\email{ajf77@bath.ac.uk}

%Universit\'e Polytechnique Hauts-de-France, EA 4015-LAMAV-FR CNRS 2956, F-59313 Valenciennes, France \&
%\newline \indent Laboratoire de Math\'ematiques (UMR 6623), Universit\'e de Bourgogne Franche-Comt\'e,
%\newline \indent 16, Route de Gray 25030 Besan\c con Cedex, France}
%\email{antonio\_jesus.fernandez\_sanchez@univ-fcomte.fr}

\address{
\vspace{-0.45cm}
\newline
\textbf{{\small Louis Jeanjean}}
\newline \indent Laboratoire de Math\' ematiques (UMR 6623), Universit\' e Bourgogne Franche-Comt\'e,
\newline \indent 16, Route de Gray 25030 Besan\c con Cedex, France}
\email{louis.jeanjean@univ-fcomte.fr}

\begin{abstract} 
Let $\gamma > 0\,$, $\beta > 0\,$, $\alpha > 0$ and $0 < \sigma N < 4$. In the present work, we study, for $c > 0$ given, the constrained minimization problem
 \begin{equation*}
\label{MinL2fixed}
m(c):=\inf_{u\in S (c) }E(u),
\end{equation*}
where \begin{equation*}
%\label{def:E}
E (u):=\frac{\gamma}{2}\int_{\R^N}|\Delta u|^2\, dx -\frac{\beta}{2}\int_{\R^N}|\nabla u|^2\, dx-\frac{\alpha}{2\sigma+2}\int_{\R^N}|u|^{2\sigma+2}\, dx, 
\end{equation*}
and
\begin{equation*} 
S(c):=\left\{u\in H^2(\R^N):\int_{\R^N}|u|^{2}\, dx=c\right\}.
\end{equation*}
The aim of our study is twofold. On the one hand, this minimization problem is related to the existence and orbital stability of standing waves for the mixed dispersion nonlinear biharmonic Schr\"odinger equation 
\begin{equation*}
i \partial_t \psi -\gamma \Delta^2 \psi - \beta \Delta \psi + \alpha |\psi|^{2\sigma} \psi =0, \quad \psi (0, x)=\psi_0 (x),\quad (t, x) \in \R \times \R^N.
\end{equation*}
On the other hand, in most of the applications of the Concentration-Compactness principle of P.-L. Lions, the difficult part is to deal with the possible dichotomy of the minimizing sequences. The problem under consideration provides an example for which, to rule out the dichotomy is rather standard while, to rule out the vanishing, here for $c > 0$ small, is challenging.  We also provide, in the limit $c \to 0$, a precise description of the behavior of the minima. Finally, some extensions and open problems are proposed. 
\end{abstract}

\maketitle
 
\section{Introduction}

In this work we consider a constrained minimization problem which is motivated by the search of standing waves solutions for the  biharmonic 
NLS (Nonlinear Schr\"odinger Equation) with mixed dispersion
\begin{equation}
\label{4nlsdis}
i \partial_t \psi -\gamma \Delta^2 \psi - \beta \Delta \psi + \alpha |\psi|^{2\sigma} \psi =0, \quad \psi (0, x)=\psi_0 (x),\quad (t, x) \in \R \times \R^N.
\end{equation}
Here $\gamma >0$, $\beta \in \R$ and $\alpha >0$ are given parameters and $0 < \sigma N < 4^*$ where we define $4^{\ast}:= 4N/(N-4)^{+}$, namely $4^{\ast} = +\infty$ if $N \leq 4$ and $4^{\ast} = 4N/(N-4)$ if $N \geq 5$.
%$$4^* := \frac{4N}{(N-4)^+}, \mbox{ namely } 4^*= \infty \mbox{ if } N\leq 4, \mbox{ and  }4^* = \frac{4N}{N-4} \mbox{ if } N \geq 5.$$ 
%For $1 \leq q < \infty,$ we denote by $L^q(\R^N)$  the usual Lebesgue space with norm $\|u\|_q^q:= \int_{\R^N}|u|^q \, dx. $  
%For $1 \leq q < \infty,$ we denote by $L^q(\R^N)$  the usual Lebesgue space with norm $\|u\|_q^q:= \int_{\R^N}|u|^q \, dx. $ 

%The space $H^2 (\R^N)$ is equipped with its standard norm. \medskip
% $\|u\|_{H^2}^2:=\|\Delta u\|_2^2 + \|\nabla u\|_2^2 + \|u\|_2^2$.
\medbreak
About twenty years ago, equation~\eqref{4nlsdis} was introduced for several distinct physical motivations, see in particular~\cite{Ka, KaSh} and~\cite{FiIlPa}. It has been since then the object of intensive studies, some dealing with dynamical issues such as local or global well-posedness, others dealing with the existence and properties of certain kind of solutions. We refer to the introductions of the works~\cite{BoCaMoNa, BoCaGoJe, BoLe, NaPa} for a presentation of the more recent results concerning~\eqref{4nlsdis}.

\medbreak
Of particular interest are the so called standing waves solutions, i.e. solutions of the form $\psi (t,x)=e^{i\lambda t} u(x)$ with $\lambda >0$. The function $u$ then satisfies the elliptic equation
\begin{equation}\tag{$4$-NLS}\label{4nls}
\gamma \Delta^2 u + \beta \Delta u+\lambda u= \alpha |u|^{2\sigma}u, \quad u \in H^2(\R^N).
\end{equation}
A possible choice is to consider that $\lambda >0$ in~\eqref{4nls} is given and to look for solutions as critical points of the functional
%$u \in H^2(\R^N)$ of~\eqref{4nls}. Such solutions correspond to critical points of the functional 
\begin{equation} \label{J-quadr}
\E(u):=\frac{\gamma}{2}\int_{\R^N} |\Delta u|^2\, dx - \frac{\beta}{2} \int_{\R^N}|\nabla u|^2\, dx+\frac{\lambda}{2} \int_{\R^N}|u|^2\, dx - \dfrac{\alpha}{2\sigma +2}\int_{\R^N} | u|^{2\sigma +2}\, dx.
\end{equation}
Then, for physical reasons, one usually focuses on the so-called least energy solutions, namely solutions which minimize $\E$ on the set
% and of particular interest are the least energy solutions, namely solutions which minimize $\E$ on the set
\begin{equation} \label{ground-state}
\mathcal{N}:= \big\{u \in H^2(\R^N) \backslash \{0 \} : \E'(u) =0 \big\}.
\end{equation}
This is the approach followed in~\cite{BoCaMoNa,BoNa}. \medskip

Alternatively, one can consider the existence of solutions to~\eqref{4nls} having a prescribed $L^2$-norm. This way the parameter $\lambda$ is viewed as a Lagrange multiplier. For $c>0$ given, we consider the problem of finding solutions to 
\begin{equation}\label{Pc}\tag{$P_c$}
\gamma \Delta^2 u + \beta \Delta u +  \lambda u = \alpha |u|^{2 \sigma}u,  \quad u \in H^2(\RN) \quad \mbox{ with } \quad  \int_{\R^N}|u|^2 dx = c.
\end{equation}
 
\noindent It is standard to show that a critical point of the energy functional
\begin{equation}\label{def:E}
E (u):=\frac{\gamma}{2}\int_{\R^N}|\Delta u|^2\, dx - \frac{\beta}{2}\int_{\R^N}|\nabla u|^2\, dx-\frac{1}{2\sigma+2}\int_{\R^N}|u|^{2\sigma+2}\, dx,
\end{equation}
restricted to
\begin{equation}\label{de:Mmu}
S(c):=\left\{u\in H^2(\R^N):\int_{\R^N}|u|^{2}\, dx=c\right\},
\end{equation}
corresponds to a solution of~\eqref{Pc}. The value of $\lambda \in \R$ in~\eqref{Pc} is then an unknown of the problem. 

\medbreak
In~\cite{BoCaMoNa, BoCaGoJe} the authors study this problem assuming that $\beta <0$. First, in~\cite{BoCaMoNa} the mass subcritical case $0 < \sigma N < 4$ was considered. In this case, the functional $E$ is bounded from below on $S(c)$ for any $c>0$. Hence, it is possible to search for a critical point of $E$ restricted to $S(c)$ as a global minimizer. Setting, for $c>0$ given,
\begin{equation}\label{MinL2fixed}
m(c):=\inf_{u\in S (c) }E(u),
\end{equation}
the following  result was obtained.
\begin{thm}[{\protect\cite[Theorem 1.1]{BoCaMoNa}}]
\label{Compact-Min-Sol} 
Assume $\gamma > 0$, $\beta \leq 0$ and $\alpha > 0$. If $0<\sigma N <2$, then $m(c)$ is achieved for every $c>0$. If $2 \leq \sigma N <4$  then there exists a critical mass $\tilde{c} \in [0, \infty)$ such that %= \tilde{c}(\gamma, \beta, \alpha, \sigma, N) \geq 0$ such that 
\begin{itemize}
\item[i)] $ m(c)$ is not achieved if $c< \tilde c$;
\item[ii)] $ m(c)$ is achieved if $c > \tilde c$ and $\sigma = 2/N$;
\item[iii)] $ m(c)$ is achieved if $c \ge \tilde c$ and $\sigma \neq 2/N$.
\end{itemize}
Moreover if $\sigma$ is an integer and $m(c)$ is achieved, then there exists at least one radially symmetric minimizer.
\end{thm}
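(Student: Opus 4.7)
The plan is to obtain the result via the direct method of the calculus of variations, combined with Lions' concentration-compactness principle. First I would verify coercivity: using $\beta \le 0$ (so $-\tfrac{\beta}{2}\|\nabla u\|_2^2 \ge 0$) together with the $H^2$-Gagliardo-Nirenberg inequality $\|u\|_{2\sigma+2}^{2\sigma+2} \le C\|\Delta u\|_2^{N\sigma/2}\|u\|_2^{2\sigma+2-N\sigma/2}$ (whose exponent $N\sigma/2 < 2$ since $\sigma N < 4$), one sees that $E$ is bounded below on $S(c)$ and minimizing sequences are bounded in $H^2(\R^N)$ (the gradient being controlled by $\|\nabla u\|_2^2 \le \|\Delta u\|_2\|u\|_2$). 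The sign of $m(c)$ is then read off from the $L^2$-preserving dilation $v_t(x) = t^{N/2} v(tx)$:
\[
E(v_t) = \frac{\gamma t^4}{2}\|\Delta v\|_2^2 + \frac{|\beta| t^2}{2}\|\nabla v\|_2^2 - \frac{\alpha t^{\sigma N}}{2(\sigma+1)}\|v\|_{2\sigma+2}^{2\sigma+2}.
\]
When $\sigma N < 2$ the negative term dominates as $t \to 0^+$, so $m(c) < 0$ for all $c > 0$; when $2 \le \sigma N < 4$ this fails, and I would set $\tilde c := \inf\{c > 0 : m(c) < 0\} \in [0,\infty)$, finite because mass-scaling a fixed profile in $S(1)$ yields $m(c) \to -\infty$.

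The concentration-compactness analysis, for minimizing sequences $(u_n) \subset S(c)$ in the regime $m(c) < 0$, proceeds as follows. \emph{Vanishing} is ruled out by the $H^2$-analogue of Lions' lemma: if $\sup_y \int_{B(y,R)}|u_n|^2\,dx \to 0$, then $u_n \to 0$ in $L^{2\sigma+2}(\R^N)$ (as $2\sigma+2$ stays Sobolev-subcritical for $H^2$), forcing $\liminf E(u_n) \ge 0$, contradiction. \emph{Dichotomy} is ruled out by the strict subadditivity $m(c_1+c_2) < m(c_1)+m(c_2)$, which reduces to strict decrease of $c \mapsto m(c)/c$ and in turn follows from the identity
\[
E(\sqrt{\theta}\,u_n) = \theta E(u_n) - \frac{\alpha(\theta^{\sigma+1}-\theta)}{2(\sigma+1)}\|u_n\|_{2\sigma+2}^{2\sigma+2}, \qquad \theta > 1,
\]
combined with a uniform lower bound $\liminf\|u_n\|_{2\sigma+2} > 0$ (otherwise $E(u_n) \ge 0$, contradicting $m(c) < 0$). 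The surviving compactness alternative then produces, after translation, a weak $H^2$-limit $u$ with $\|u\|_2^2 = c$, which is a minimizer by weak lower semicontinuity of $E$ and strong $L^{2\sigma+2}$ convergence.

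This completes the case $\sigma N < 2$. For $2 \le \sigma N < 4$, existence follows for $c > \tilde c$ as above. For $c < \tilde c$ one has $m(c) = 0$ (combining the definition of $\tilde c$ with $m(c) \le \lim_{t \to 0^+} E(v_t) = 0$); a putative minimizer $u$ would satisfy
\[
\left.\tfrac{d}{d\lambda}\right|_{\lambda=1} E(\lambda u) = -\sigma\bigl(\gamma\|\Delta u\|_2^2 + |\beta|\|\nabla u\|_2^2\bigr) < 0
\]
(derived from $E(u) = 0$), so $m(\lambda^2 c) < 0$ for $\lambda^2 c$ arbitrarily close to $c$, contradicting $c < \tilde c$. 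At the boundary $c = \tilde c$, the Pohozaev-like identity $\gamma(4-\sigma N)\|\Delta u\|_2^2 = |\beta|(\sigma N - 2)\|\nabla u\|_2^2$, obtained from $f(1) = f'(1) = 0$ with $f(t) := E(v_t)$, forces $u = 0$ when $\sigma = 2/N$ (yielding (ii)) but is compatible with $u \ne 0$ otherwise, where a continuity argument gives $m(\tilde c) < 0$ and hence (iii). As the paper itself emphasizes, the main obstacle is excluding vanishing when $c$ approaches $\tilde c$ from above, since the lower bound on $\|u_n\|_{2\sigma+2}$ degenerates and a quantitative refinement is needed. Finally, for the radial symmetry claim with integer $\sigma$, Schwarz symmetrization is incompatible with the biharmonic term, so I would instead work within $H^2_{\mathrm{rad}}(\R^N)$, exploiting its compact embedding into $L^{2\sigma+2}(\R^N)$ together with a rearrangement argument adapted to integer-power nonlinearities to match the unrestricted infimum.
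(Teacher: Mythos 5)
A point of reference first: the paper does not prove Theorem~\ref{Compact-Min-Sol} at all --- it is quoted from \cite[Theorem 1.1]{BoCaMoNa} --- so the comparison here is with the proof in that reference. Your overall architecture (coercivity via the $H^2$ Gagliardo--Nirenberg inequality, the $L^2$-preserving dilation $v_t$ to determine the sign of $m(c)$, the definition $\tilde c=\inf\{c>0:m(c)<0\}$, exclusion of vanishing from $m(c)<0$, strict subadditivity from the mass-scaling identity) matches the standard route and is sound where it applies; in particular you correctly isolate where $\beta\le 0$ enters (vanishing forces $\liminf E(u_n)\ge 0$ because the quadratic part of $E$ is nonnegative --- exactly the feature that fails for $\beta>0$ and makes the present paper's problem delicate).

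The genuine gap is in case (iii) at the endpoint $c=\tilde c$, which is the only place where (ii) and (iii) actually differ. You assert that ``a continuity argument gives $m(\tilde c)<0$''. This is false: for $c<\tilde c$ one has $0\le m(c)\le \lim_{t\to 0^+}E(v_t)=0$, so $m\equiv 0$ on $(0,\tilde c)$, and approximating any $u\in S(\tilde c)$ by $\sqrt{c_n/\tilde c}\,u\in S(c_n)$ with $c_n\uparrow\tilde c$ gives $E(u)=\lim_n E(\sqrt{c_n/\tilde c}\,u)\ge\lim_n m(c_n)=0$, hence $m(\tilde c)=0$. (Indeed you yourself use $f(1)=E(u)=0$ when deriving the Pohozaev identity at $\tilde c$, so the claim $m(\tilde c)<0$ contradicts your own computation.) Since $m(\tilde c)=0$, your only mechanism for excluding vanishing --- strict negativity of the level --- is unavailable at $c=\tilde c$, and a minimizing sequence at level $0$ can perfectly well vanish (the spreading dilations $v_t$, $t\to 0^+$, do). The case $c=\tilde c$, $\sigma\ne 2/N$ therefore needs a different argument, e.g.\ taking the minimizers $u_c$ already obtained for $c>\tilde c$, letting $c\downarrow\tilde c$, and using the near-Pohozaev identity $\gamma(4-\sigma N)\|\Delta u_c\|_2^2-|\beta|(\sigma N-2)\|\nabla u_c\|_2^2=O(|m(c)|)$ together with Gagliardo--Nirenberg to obtain a uniform lower bound on $\|u_c\|_{2\sigma+2}$, so that the limit is a nontrivial minimizer at $\tilde c$; this non-degeneracy is exactly what fails when $\sigma N=2$, consistent with (ii). A secondary weakness is the symmetry claim: minimizing over $H^2_{\mathrm{rad}}(\R^N)$ only gives the radial infimum, and the step matching it with the full infimum is precisely the nontrivial ingredient, namely the Fourier rearrangement $u\mapsto\mathcal{F}^{-1}\{(\mathcal{F}u)^{*}\}$, which for $\beta\le 0$ does not increase $\|\Delta u\|_2$ or $\|\nabla u\|_2$, preserves $\|u\|_2$, and does not decrease $\|u\|_{2\sigma+2}$ for integer $\sigma$ (Brascamp--Lieb--Luttinger); ``a rearrangement argument adapted to integer-power nonlinearities'' must be made this precise to close the proof.
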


\begin{remark} $ $ \label{minimal mass} 
Let us point out that, for $\beta = 0$, it follows that $\tilde{c} = 0$ while, for $ \beta < 0$, it holds that $\tilde{c} > 0$. The appearance of a critical mass when $\beta < 0$ and $2 \leq \sigma N < 4$ is linked to the fact that each term of $E$ behaves differently with respect to scaling. Such a phenomenon was first observed in~\cite{CoJeSq} and has now been revealed in several distinct settings, see for instance~\cite{CaDoSaSo,DoSeTi,JeLu} for related results.
%\item[b)] For $\beta = 0$, it follows that $\tilde c = 0$ while, for $\beta < 0$, it holds that $\tilde c > 0$.
\end{remark}

\medbreak
In~\cite{BoCaGoJe} the authors considered, still assuming $\beta <0$, the mass critical case $\sigma N=4$ and the mass supercritical case $ 4 < \sigma N < 4^*$.  In particular, it was shown in~\cite[Theorem 1.2]{BoCaGoJe} that standing waves do not exist if $\sigma N=4$ and that, assuming $c>0$ sufficiently small, they do exist when $ 4 < \sigma N < 4^*$, see~\cite[Theorem 1.3]{BoCaGoJe}. Note that in the mass supercritical case $4 < \sigma N < 4^{\ast}$ the functional $E$ is no longer bounded from below on $S(c)$. The critical points obtained in~\cite{BoCaGoJe} are of saddle point type. In ~\cite{BoCaGoJe}, some multiplicity results for radial solutions were also derived.
\medbreak
Very recently, the case where $\beta >0$ started to be considered in the work~\cite{LuZhZh} which is devoted to the mass subcritical case $0 < \sigma N < 4$ and the mass critical case $\sigma N =4$. We shall come back later, in some details, to this work. 

\medbreak
Here we also deal with the case $\beta >0$ and restrict ourselves to the mass subcritical case $0 < \sigma N <4$, see however Section~\ref{sect-extensions} for a result in the mass critical case $\sigma N =4$.  Our first main result reads as follows
\medbreak
\begin{thm}\label{th1}
Assume $\gamma >0$, $\beta >0$, $\alpha >0$ and  $0<\sigma N <4$. For $m(c)$ defined as in~\eqref{MinL2fixed}, there exists $c_0 \in [0, \infty)$ such that:
\begin{itemize}
\item[i)] If $c > c_0,\,$ any minimizing sequence of $m(c)$ is precompact in $H^2(\R^N)$ up to translations. In particular, $m(c)$ is achieved.
\item[ii)] If $0 < \sigma < \max \left\{\frac{4}{N+1},1\right\}$, then we have that $c_0 = 0$.
%We have $c_0 =0$ if one of the following conditions hold:
%\begin{itemize}
%\item[a)] $ 0 < \sigma< \frac{4}{N+1},$
%ºitem[b)] $ 0 < \sigma <1.$
%\end{itemize}
\end{itemize}
In addition, if $u \in S(c)$ is a minimizer of $m(c)$, the associated Lagrange multiplier $\lambda \in \R$  satisfies $\lambda > \frac{\beta^2}{4 \gamma}.$
\end{thm}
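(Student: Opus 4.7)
The strategy is to apply the Concentration-Compactness principle of P.-L. Lions to any minimizing sequence of $m(c)$, with the crucial threshold level $-\beta^2 c/(8\gamma)$ dictated by the linear spectral theory of $\gamma\Delta^2+\beta\Delta$. By Plancherel and a completion of squares in Fourier space,
\[
T(u) \,:=\, \gamma\|\Delta u\|_2^2 - \beta\|\nabla u\|_2^2 \,=\, \int_{\R^N}\bigl(\gamma|\xi|^4-\beta|\xi|^2\bigr)\,|\widehat u(\xi)|^2\,d\xi \,\geq\, -\frac{\beta^2}{4\gamma}\,c,
\]
with strict inequality for every $u\not\equiv 0$, since the symbol attains its minimum only on the measure-zero sphere $\{|\xi|^2=\beta/(2\gamma)\}$. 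Combining this with a Gagliardo-Nirenberg inequality (valid throughout the range $\sigma N<4$) shows that $E$ is bounded below on $S(c)$ and coercive in $\|\Delta u\|_2$ along minimizing sequences, while the test function $u_t(x)=t^{N/2}u(tx)$ with $t\to 0^+$ gives $m(c)<0$ for every $c>0$.

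I would then set $c_0:=\inf\{c>0 : m(c)<-\beta^2 c/(8\gamma)\}$. The dilation $u\mapsto\sqrt{\theta}\,u$ yields the elementary inequality $m(\theta c)\leq\theta\,m(c)$ for every $\theta>1$, which implies that the set appearing in this infimum is of the form $(c_0,+\infty)$, and is nonempty because $m(c)\sim-c^{\sigma+1}$ as $c\to\infty$, hence $c_0<\infty$. For $c>c_0$, \emph{vanishing} of a minimizing sequence $(u_n)$ is ruled out directly: Lions' lemma gives $u_n\to 0$ in $L^{2\sigma+2}(\R^N)$ (note $2<2\sigma+2<4^*$), whence $E(u_n)=T(u_n)/2+o(1)\geq-\beta^2 c/(8\gamma)+o(1)$, contradicting $m(c)<-\beta^2 c/(8\gamma)$. \emph{Dichotomy} is excluded via the classical strict subadditivity $m(c)<m(c_1)+m(c-c_1)$ for $c_1\in(0,c)$, which follows from the strict form $m(\theta c)<\theta m(c)$ of the scaling inequality above --- the strictness being a consequence of the fact that $\|u_n\|_{2\sigma+2}^{2\sigma+2}$ stays bounded away from $0$ along a non-vanishing minimizing sequence. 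Lions' trichotomy then yields compactness up to translation, concluding part~(i).

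For part~(ii) the task is to exhibit, for every $c>0$ under the assumption $\sigma<\max\{4/(N+1),1\}$, a test function $u\in S(c)$ with $E(u)<-\beta^2 c/(8\gamma)$. My ansatz would be the modulated profile $u_{a,\tau}(x)=a\,\phi(x/\tau)\,e^{ik\cdot x}$ with $|k|^2=\beta/(2\gamma)$ and $a^2\tau^N\|\phi\|_2^2=c$. A direct computation, exploiting the cancellation $2\gamma|k|^2-\beta=0$ at the Fourier minimum, reduces $E(u_{a,\tau})+\beta^2 c/(8\gamma)$ to a balance between a positive linear error of order $c\bigl(\tau^{-4}+\tau^{-2}\bigr)$ and a negative nonlinear term of order $c^{\sigma+1}\tau^{-N\sigma}$. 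A joint optimization over $\tau$ and the profile $\phi$ --- using a simple undilated $\phi$ when $\sigma<1$, and an anisotropic ansatz tailored to the flatness of the Fourier symbol along the sphere $\{|\xi|^2=\beta/(2\gamma)\}$ when $\sigma N\geq 2$ --- yields the desired inequality precisely in the range $\sigma<\max\{4/(N+1),1\}$. This test function construction is the most delicate step and is where I expect the main technical obstacle.

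Finally, for any minimizer $u\in S(c)$ with Lagrange multiplier $\lambda$, testing the Euler-Lagrange equation against $u$ gives $\alpha\|u\|_{2\sigma+2}^{2\sigma+2}-T(u)=\lambda c$, while $E(u)=m(c)$ gives $\alpha\|u\|_{2\sigma+2}^{2\sigma+2}=(\sigma+1)T(u)-(2\sigma+2)m(c)$. Subtracting these two identities yields
\[
\lambda c \,=\, \sigma\,T(u) \,-\, 2(\sigma+1)\,m(c).
\]
Since minimizers only exist for $c\geq c_0$ --- at which point $m(c)\leq-\beta^2 c/(8\gamma)$ by continuity --- and $T(u)>-\beta^2 c/(4\gamma)$ strictly because $u\neq 0$, these two estimates combine to give
\[
\lambda c \,>\, -\sigma\,\frac{\beta^2 c}{4\gamma} \,+\, (\sigma+1)\,\frac{\beta^2 c}{4\gamma} \,=\, \frac{\beta^2 c}{4\gamma},
\]
as claimed.
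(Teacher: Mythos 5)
Your overall architecture coincides with the paper's: the threshold $-\beta^2c/(8\gamma)$ coming from the auxiliary problem $\inf_{S(c)}\bigl(\tfrac{\gamma}{2}\|\Delta u\|_2^2-\tfrac{\beta}{2}\|\nabla u\|_2^2\bigr)$, the definition $c_0=\inf\{c:m(c)<-\beta^2c/(8\gamma)\}$, the exclusion of vanishing by Lions' lemma plus the strict inequality, and the Lagrange multiplier computation are all essentially the paper's Lemmas on $m_I$, the necessary-and-sufficient condition, and the end of the proof of Theorem 1.2. One imprecision in part (i): the ``classical strict subadditivity $m(c)<m(c_1)+m(c-c_1)$ for all $c_1\in(0,c)$'' does not follow directly from your strictness mechanism, because $m(\theta c_1)<\theta m(c_1)$ \emph{fails} whenever $c_1$ and $\theta c_1$ both lie below $c_0$ (there $m$ is exactly linear, $m(s)=-\beta^2 s/(8\gamma)$, and minimizing sequences vanish so $\|u_n\|_{2\sigma+2}\to0$). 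The statement is still true, but proving it requires either a case analysis on whether the sub-masses exceed $c_0$ (using $m(s)=-\beta^2s/(8\gamma)$ below $c_0$), or the paper's cleaner route: in the dichotomy scenario the weak limit $u$ is shown via Brezis--Lieb to be an actual minimizer at mass $c_1=\|u\|_2^2$, and strict subadditivity is invoked only at that $c_1$, where the existence of a minimizer supplies the strictness.

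The genuine gap is in part (ii), for the branch $\sigma<1$ with $N\geq4$ (the only regime where $\max\{4/(N+1),1\}=1>4/(N+1)$). Your ansatz $u_{a,\tau}=a\,\phi(x/\tau)e^{ik\cdot x}$ concentrates $\widehat{u}$ near the single point $k$ of the sphere $\{|\xi|^2=\beta/(2\gamma)\}$; the anisotropic optimization (squeezing in the radial direction, where the symbol $\gamma(|\xi|^2-\beta/(2\gamma))^2$ vanishes to second order) yields exactly the paper's first construction and the exponent $\sigma<4/(N+1)$, but no better. A ``simple undilated $\phi$'' cannot reach $\sigma<1$: for fixed profile the linear error $\|(\Delta+\tfrac{\beta}{2\gamma})(\phi e^{ik\cdot x})\|_2^2$ is a fixed positive multiple of $c$, which dominates the nonlinear gain $O(c^{\sigma+1})$ as $c\to0$; an isotropically spreading modulated bump only gives $\sigma<2/N$, since the symbol is linear (not quadratic) in the tangential--radial decomposition at a single point. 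To reach $\sigma<1$ one must spread the Fourier mass over the \emph{entire} sphere, which the paper does with the truncated generalized eigenfunction $\psi_m(x)=|x|^{-(N-2)/2}J_{(N-2)/2}(|x|)\,\phi(x/m)$ solving $(\Delta+1)\psi=0$. The decisive quantitative features are $\|\psi_m\|_2^2\sim m$ (only linear growth, because $|\psi|\sim|x|^{-(N-1)/2}$), $\|(\Delta+1)\psi_m\|_2^2\lesssim m^{-1}$, and $\|\psi_m\|_{2\sigma+2}^{2\sigma+2}\gtrsim1$, which after normalization to mass $c$ give a competition $m^{-2}$ versus $m^{-\sigma-1}$ and hence the condition $\sigma<1$. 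This construction is of a genuinely different nature from a modulated bump and is not recoverable from your ansatz, so as written your proposal does not establish part (ii) beyond $\sigma<4/(N+1)$.
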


\begin{remark} $ $ \label{stability} \label{symmetry}
\begin{itemize}
\item[a)] In the case where $\beta < 0$ and $2 \leq \sigma N < 4$, see Theorem~\ref{Compact-Min-Sol} (\cite[Theorem 1.1]{BoCaMoNa}), there exists a critical mass $\tilde{c} > 0$ such that $m(c)$ is not achieved if $c < \tilde{c}$. The situation is now different. In the case $\beta > 0$ considered in Theorem~\ref{th1}, except for $N=1$, if $\sigma \in( \frac{2}{N}, \max \left\{\frac{4}{N+1},1\right\})$ such that, for every $c > 0$, $m(c)$ is achieved. %In other words, we may find $\sigma > \frac{2}{N}$ for which there does not exist a critical mass. 
\item[b)] It is known from~\cite{Pa}, see also~\cite{BeKoSa}, that the Cauchy problem associated to~\eqref{4nlsdis} is locally well-posed in $H^2(\R^N)$ as soon as $0 < \sigma N <4^*$. Also, in the mass subcritical case $0 < \sigma N <4$ that we are considering in Theorem~\ref{th1}, the global existence for the Cauchy problem holds, see~\cite{FiIlPa, Pa}. Thus, having at hand the precompactness up to translations of any minimizing sequence, it is standard to show the orbital stability of the set of global minima following the strategy laid down in~\cite{CaLi}.
\item[c)] When $\sigma >0$ is an integer, using a very recent result of L. Bugiera, E. Lenzmann and J. Sok~\cite{BuLeSo}, it is possible to obtain symmetry properties for the global minimizer of $m(c)$. In view of Theorem~\ref{th1}, we shall benefit from these results when $N=1,2$. More details will be given in Section~\ref{sect-extensions}.
\end{itemize}
\end{remark}
 
Let us now provide some elements of the proof of Theorem~\ref{th1}. First, assuming that $0 < \sigma N <4$, it is straightforward to show that the functional $E$ is bounded from below on $S(c)$ and coercive, see Lemma~\ref{coercive}. In particular, $m(c)$ is well defined for any $c>0$. Then, using a convenient version of the Concentration Compactness principle of P.-L. Lions, 
%\cite{Li1, Li2}, 
we deduce that, for any $c>0$, either the vanishing of a minimizing sequence occurs or it is precompact up to translations, see Lemma~\ref{alternative}. Namely, the ruling out of the vanishing also exclude the possibility of dichotomy for the minimizing sequences. Then, in Lemma~\ref{lem:existence}, we show that a necessary and sufficient condition to avoid the vanishing is that 
\begin{equation}\label{strict}
m(c) < - \frac{\beta^2}{8 \gamma}c.
\end{equation}
This condition is derived through the study of an associated minimization problem, see Section~\ref{sect-associated-problem}, which has also an interest by itself. More precisely, for all $c > 0$, we consider
\begin{equation} \label{mI-intro}
m_I(c) := \inf_{u \in S(c)}I(u),
\end{equation}
where
$$I(u) :=\frac{\gamma}{2}\int_{\R^N}|\Delta u|^2\, dx -\frac{\beta}{2}\int_{\R^N}|\nabla u|^2\, dx,$$
and we show that $m_I(c) = - \frac{\beta^2}{8 \gamma}c$, that the infimum is never achieved and that any minimizing sequence is vanishing. See Lemma~\ref{Mihai-1}.

\medbreak
When $c >0$ is large, it is quite straightforward to show that~\eqref{strict} holds. However, when $c>0$ is small, the situation is surprisingly much more involved and the treatment of this case is a central part of the present work. Under the assumption $0 < \sigma < \min\{\max\{4/(N+1),1\}, 4/N\}$, we manage to check~\eqref{strict} for all $c > 0$ through the construction of suitable  sequences of test functions. We refer to Section~\ref{sect-test-functions} for more details. Our choice of test functions is inspired by the following result which provides a description of the behavior of the minima, when they exist, as $c \to 0$.

\begin{thm}\label{th2}
Assume $\gamma >0$, $\beta >0$, $\alpha >0$ and $0<\sigma N <4$. Let $\{(u_n, c_n)\} \subset S(c_n) \times \R$ be such that $c_n \to 0$ as $n \to \infty$ and $u_n \in S(c_n)$ be a minimizer of $m(c_n)$ for each $n \in \N$. Then:
\begin{itemize} 
\item[i)] There exists $\{\varepsilon_n\} \subset \R^+$ with $\varepsilon_n \to 0$ such that
$$ -\frac{\beta^2}{8\gamma} (1+\epsilon_n) c_n = (1+\epsilon_n) m_I(c_n) \leq m(c_n) \leq m_I(c_n) = - \frac{\beta^2}{8 \gamma}c_n, \quad \forall\ n \in \N.$$
%Prueba
%$$ - \big( \frac{\beta^2}{8 \gamma} + \varepsilon_n \big) c_n = - \epsilon_n c_n + m_I(c_n)  \leq m(c_n) \leq m_I(c_n) = - \frac{\beta^2}{8 \gamma}c_n, \quad \forall\ n \in \N.$$
\item[ii)]  For $\lambda_n \in \R$  the Lagrange multiplier associated to $u_n$, %there exists $\{ \mu_n\} \subset \R^{+}$ with $\mu_n \to 0$ such that
%\[ \frac{\beta^2}{4\gamma} < \lambda_n \leq \frac{\beta^2}{4\gamma} + \mu_n, \quad \forall \n 
it follows that
\begin{equation*}
\lambda_n \to \Big(\frac{\beta^2}{4 \gamma}\Big)^+ \quad \mbox{as} \quad n \to \infty.
\end{equation*}
\item[iii)] It holds that
$$ \frac{\|\Delta u_n\|_2}{\|u_n\|_2} \rightarrow \frac{\beta}{2 \gamma} \quad \mbox{and} \quad \frac{\|\nabla u_n\|_2}{\|u_n\|_2} \rightarrow \sqrt{\frac{\beta}{2 \gamma}} \quad \mbox{as} \quad n \to \infty.$$
\item[iv)]  Setting $v_n = \displaystyle \frac{u_n}{\|u_n\|_2}$ we have that $v_n \to 0$ in $L^p(\R^N)$ for any $p \in (2, 4^*)$. Also 
\begin{equation}
\label{4}
\int _{\R^N} \Big( |\xi |^2 - \frac{\beta}{2 \gamma} \Big)^2 | \wh{v_{n} }( \xi )|^2 \, d \xi \to 0 \quad \mbox{as} \quad n \to \infty.
\end{equation}
\end{itemize}
\end{thm}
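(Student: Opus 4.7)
The plan is to reduce everything to a single pivotal estimate on the nonnegative functional
\[
J(u) := I(u) + \tfrac{\beta^2}{8\gamma}\|u\|_2^2 = \tfrac{\gamma}{2}\int_{\R^N}\Bigl(|\xi|^2 - \tfrac{\beta}{2\gamma}\Bigr)^{\!2}\,|\wh{u}(\xi)|^2\, d\xi \geq 0,
\]
which vanishes precisely when $|\wh u|^2$ concentrates on the sphere $\{|\xi|^2 = \beta/(2\gamma)\}$. The heart of the proof is to show that $J(u_n) = o(c_n)$ as $c_n \to 0$; the four conclusions then follow with essentially no further work.

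For this pivotal estimate I combine three ingredients. First, since $E \leq I$ pointwise on $S(c_n)$, Lemma~\ref{Mihai-1} gives $m(c_n) \leq m_I(c_n) = -\beta^2 c_n/(8\gamma)$; substituting $m(c_n) = I(u_n) - \frac{\alpha}{2\sigma+2}\|u_n\|_{2\sigma+2}^{2\sigma+2}$ and using $I(u_n) = J(u_n) - \beta^2 c_n/(8\gamma)$ rearranges to
\[
0 \leq J(u_n) \leq \tfrac{\alpha}{2\sigma+2}\|u_n\|_{2\sigma+2}^{2\sigma+2}.
\]
Second, since $0 < \sigma N < 4$, the Gagliardo--Nirenberg inequality yields $\|u_n\|_{2\sigma+2}^{2\sigma+2} \leq C\, \|\Delta u_n\|_2^{N\sigma/2}\, c_n^{\eta/2}$ with $\eta := 2\sigma+2 - N\sigma/2$. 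Third, the Fourier definition of $J$ together with the Cauchy--Schwarz bound $\|\nabla u\|_2^2 \leq \|u\|_2\|\Delta u\|_2$ produces the quadratic inequality $(\|\Delta u_n\|_2 - \beta\sqrt{c_n}/(2\gamma))^2 \leq 2J(u_n)/\gamma$, hence $\|\Delta u_n\|_2 \leq \beta\sqrt{c_n}/(2\gamma) + \sqrt{2J(u_n)/\gamma}$. Feeding this into the previous inequalities yields a self-improving bound
\[
J(u_n) \leq C\bigl(\sqrt{c_n} + \sqrt{J(u_n)}\,\bigr)^{N\sigma/2}\, c_n^{\eta/2},
\]
and a short dichotomy (on whether $J(u_n) \leq c_n$ or not) leads to $J(u_n) \leq C\bigl(c_n^{\sigma+1} + c_n^{2\eta/(4-N\sigma)}\bigr)$. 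Both exponents strictly exceed $1$ (the first because $\sigma > 0$, the second because $2\eta - (4-N\sigma) = 4\sigma > 0$), so $J(u_n)/c_n \to 0$. This is the main technical obstacle.

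With $J(u_n) = o(c_n)$ in hand, part (i) is immediate: $I(u_n)/c_n \to -\beta^2/(8\gamma)$ by definition of $J$, and $\|u_n\|_{2\sigma+2}^{2\sigma+2} = O(c_n^{\sigma+1}) = o(c_n)$ by the same Gagliardo--Nirenberg bound, so $m(c_n) = -\beta^2 c_n/(8\gamma) + o(c_n)$, and the choice $\epsilon_n := (m_I(c_n) - m(c_n))/|m_I(c_n)| \to 0^+$ delivers the sandwich. For part (ii), testing the Euler--Lagrange equation against $u_n$ and comparing with $E(u_n) = m(c_n)$ gives the identity $\lambda_n c_n = 2\sigma I(u_n) - (2\sigma+2) m(c_n)$, whence $\lambda_n \to 2\sigma(-\beta^2/(8\gamma)) - (2\sigma+2)(-\beta^2/(8\gamma)) = \beta^2/(4\gamma)$; convergence from above is precisely the strict inequality $\lambda_n > \beta^2/(4\gamma)$ already established in Theorem~\ref{th1}.

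Finally, part (iii) is a direct computation after setting $v_n := u_n/\|u_n\|_2$, for which $J(v_n) = J(u_n)/c_n \to 0$: expanding $|\xi|^4$ and $|\xi|^2$ around $\beta/(2\gamma)$ and applying Cauchy--Schwarz to the linear deviation terms (each controlled by $\sqrt{2J(v_n)/\gamma}$) gives $\|\Delta v_n\|_2^2 \to (\beta/(2\gamma))^2$ and $\|\nabla v_n\|_2^2 \to \beta/(2\gamma)$. Part (iv) then comes for free: the same computation shows $I(v_n) \to -\beta^2/(8\gamma) = m_I(1)$, so $\{v_n\}$ is a minimizing sequence for $m_I(1)$, and Lemma~\ref{Mihai-1} asserts that every such sequence vanishes in $L^p(\R^N)$ for each $p \in (2, 4^*)$; the Fourier convergence~\eqref{4} is merely $J(v_n) \to 0$ rewritten.
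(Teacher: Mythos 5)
Your argument is correct, and it takes a genuinely different route from the paper's. The paper proceeds by first proving two preliminary claims (that $\|\Delta u_n\|_2/\|u_n\|_2$ stays bounded, via coercivity, and that $\|u_n\|_{2\sigma+2}^{2\sigma+2}/c_n \to 0$, via Gagliardo--Nirenberg), and then treats each of i)--iv) separately; in particular, for iii) it invokes a Pohozaev-type identity imported from an external reference and combines it with the expansion of $E(u_n)$ from i). You instead organize the whole proof around the single nonnegative quantity $J(u) = I(u) + \frac{\beta^2}{8\gamma}\|u\|_2^2$, whose Fourier representation makes nonnegativity and the concentration statement \eqref{4} transparent, and you prove the quantitative bound $J(u_n) \leq C\left(c_n^{\sigma+1} + c_n^{2\eta/(4-N\sigma)}\right)$ by a short bootstrap combining $J(u_n) \leq \frac{\alpha}{2\sigma+2}\|u_n\|_{2\sigma+2}^{2\sigma+2}$, Gagliardo--Nirenberg, and the pointwise bound $J(u) \geq \frac{\gamma}{2}\left(\|\Delta u\|_2 - \frac{\beta}{2\gamma}\|u\|_2\right)^2$ coming from \eqref{interpolation}; I checked the exponents and both do exceed $1$. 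This buys you two things the paper does not have: an explicit rate $J(u_n) = O(c_n^{1+\delta})$, and a proof of iii) by direct Cauchy--Schwarz on the Fourier side (expanding $|\xi|^4$ and $|\xi|^2$ about $\beta/(2\gamma)$) that dispenses with the Pohozaev identity altogether. Your treatments of ii) (testing the Euler--Lagrange equation against $u_n$ and using $\lambda_n > \beta^2/(4\gamma)$ from Theorem~\ref{th1}) and of the $L^p$ part of iv) coincide with the paper's. One small imprecision: Lemma~\ref{Mihai-1} iii) asserts vanishing of minimizing sequences in the concentration-compactness sense (the $\sup_y \int_{B_R(y)}$ sense), not directly in $L^p$; as in the paper, you still need to invoke Lions' lemma together with the $H^2$-boundedness of $\{v_n\}$ to upgrade this to $v_n \to 0$ in $L^p(\R^N)$ for $p \in (2,4^*)$. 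This is a presentational gap only, not a mathematical one.
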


\begin{remark} \label{remark test functions}$ $
\begin{itemize}
\item[a)] From i) we infer that the slope of $m(c)$ at the origin is precisely $-\frac{\beta^2}{8\gamma}$. Hence the condition~\eqref{strict} is delicate to check at $c > 0$ small. 
\item[b)] From~\eqref{4} we see that the $L^2-$norm of $\{\wh{v}_{n}\} \subset S(1)$ concentrates asymptotically on the sphere of radius $ \sqrt{\beta/2\gamma}$ % $\sqrt{\frac{\beta}{2\gamma}}$ 
centered at the origin as $n \to \infty$. This indicates one of the main features of the test functions to choose in order to prove that, under the assumption $0 < \sigma < \max\{4/(N+1),1\}$, the strict inequality~\eqref{strict} holds for every $c > 0$.
\end{itemize}
\end{remark}

Let us now turn back to the work~\cite{LuZhZh} and try to articulate our results with the ones presented in this work. 
In~\cite{LuZhZh}, in the mass subcritical case and mass critical cases, assuming that the non vanishing holds it is shown that any minimizing sequence is precompact, up to translation (and thus that the set of global minima is orbitally stable, see Remark~\ref{stability} b)). Instead of using the approach laid down by P.-L. Lions, the authors rely on a Profile Decomposition of bounded sequences in $H^2(\R^N)$ which was established in~\cite{ZhZhYa}. In~\cite{LuZhZh} is derived an explicit lower bound on $c>0$ above which the non-vanishing holds (this corresponds in~\cite{LuZhZh} to the case $\mu <0, |\mu|$ small). There are no results in~\cite{LuZhZh} when $c>0$ is small.  
\medbreak

We conclude our work, in Section~\ref{sect-extensions}, with some remarks and open problems. First we show, when $\sigma \in \N$, that properties of the minima of $m(c)$ can be obtained exploiting a result from~\cite{BuLeSo}. 
Also, it should be clear that the condition $0 < \sigma < \max\left\{4/(N+1),1 \right\}$ is the consequence of two particular trials of test functions. Nothing guarantees that we have obtained, in Theorem~\ref{th1}, the sharpest conditions for the existence of a minimizer for $c>0$ and deriving necessary and sufficient conditions to insure that it is the case is worth of study. 
Next, we indicate how our test functions also prove useful in the case where the problem is mass critical. Finally, we note that the existence of a minimizer of $m(c)$ for any $c>0$ small can be interpreted as a bifurcation result, in the $H^2(\R^N)$ norm, from the bottom of the essential spectrum of the operator 
$u \mapsto  \gamma  \Delta^2 u  + \beta \Delta u$ defined on $H^2(\R^N)$.  It would be interesting to see if such phenomena is also present for the ground states solutions obtained from the functional $\mathbb{E}$ in~\cite{BoCaMoNa, BoNa}. 
%See Section~\ref{sect-extensions} for more details. 

\medbreak
We now describe the organization of the work. In Section~\ref{preresults}, we present some preliminary results. In particular, we present the proof of Lemma~\ref{alternative} which shows that, if the vanishing do not occurs, then $m(c)$ is reached.  In Section~\ref{sect-associated-problem}, we study in details the associated minimization problem~\eqref{mI-intro}. In Section~\ref{sect-existence}, we derive the sufficient and necessary condition~\eqref{strict} which guarantees that the vanishing does not occur. Section~\ref{sect-test-functions} is devoted to the construction of our two families of test functions which permit to rule out the vanishing under the conditions on $\sigma >0$ given in Theorem~\ref{th1}. At this point the proof of Theorem~\ref{th1} is completed. In Section~\ref{sect-limit}, we give the proof of Theorem~\ref{th2}, which deals with the behaviour of the minima as $c \to 0$. Finally, in Section~\ref{sect-extensions}, we present some additional results and state some open problems. \medbreak

In the rest of the work, we assume that $N \geq 1,$ $\gamma > 0$, $\beta > 0$, $\alpha > 0$ and $0 < \sigma N < 4$.

\begin{notation}
For  $1\leq p<\infty,$ we denote by $L^p(\R^N)$ the usual Lebesgue space with norm
$$
\|u\|_p^p := \int_{\R^N}|u|^p\,dx.
$$
The Sobolev space $H^2(\R^N)$ is endowed with its standard norm
$$
\|u\|^2 := \int_{\R^N}|\Delta u|^2 + |\nabla u|^2+|u|^2 \, dx.
$$
 We denote by $'\rightarrow'$, respectively by $'\rightharpoonup'$, the strong convergence, respectively the  weak convergence in corresponding space and denote by $B_R(x)$ the ball in $\R^N$ of center $x$ and radius $R>0.$  We use the notation $o_n(1)$ for any quantity which tends to zero as $n \to \infty$.
we shall denote by $C>0$ a constant which may vary from line to line but which is not essential in the analysis. 

Eventually, we use the Fourier transform on $L^1(\R^n)$
\[ \hat{u}(\xi) = \mathcal{F}[u](\xi) = \int_{\R^N} e^{-i\xi x} u(x) \, d x.\]
\end{notation}

\section{Preliminary results} \label{preresults}

We shall make use of some inequalities that we now present. 
First, we recall   
the Gagliardo-Nirenberg inequality (see~\cite[Theorem in Lecture II]{Nir}): 
for all $0 \leq \sigma < 4/(N-4)^{+}$,  i.e. $0 \leq \sigma$ if $N \leq 4$ and $0 \leq \sigma < 4/(N-4)$ if $N \geq 5$, there exists a constant $B_{N}(\sigma) > 0$ such that
\begin{equation}\label{G-N-H2-ineq}
 \|u\|^{2\sigma+2}_{2\sigma+2}\leq B_N(\sigma)\,\|\Delta u\|^{\frac{\sigma N}{2}}_2\|u\|^{2+2\sigma-\frac{\sigma N}{2}}_2, \quad \forall\ u \in H^2(\R^N).
\end{equation}
%where $0 \leq \sigma < 4/(N-4)^{+}$,
%\begin{equation*}
%\left\{
%\begin{aligned}
%&\ 0\leq\sigma,  & \mbox{if}\ N\leq 4,\\
%&\ 0\leq\sigma<\dfrac{4}{N-4}, \hspace{0.25cm} &\mbox{if}\ N\geq 5,
%\end{aligned}
%\right.
%\end{equation*}
Having at hand~\eqref{G-N-H2-ineq}, by interpolation and using the Sobolev inequality, one may infer (see~\cite[Theorem in Lecture II]{Nir}) that, for all $0 \leq \sigma < 2/(N-2)^{+}$, namely $0 \leq \sigma$ if $N \leq 2$ and $0 \leq \sigma < 2/(N-2)$ if $N \geq 3$, there exists a constant $C_N(\sigma)>0$ such that 
\begin{equation}\label{G-N-H1-ineq}
 \|u\|^{2\sigma+2}_{2\sigma+2}\leq C_N(\sigma)\|\nabla u\|^{{\sigma N}}_2\|u\|^{2+\sigma(2-N)}_2, \quad \forall\ u \in H^2(\R^N).
\end{equation}
%where $0 \leq \sigma < 2/(N-2)^{+}$.
%\begin{align*}
 %    \begin{cases}
 %    &0\leq\sigma, \hspace{.5cm} \mbox{if}\ N\leq 2,\\
 %    &0\leq\sigma<\dfrac{2}{N-2}, \hspace{.5cm} \mbox{if}\ N \geq 3.
 %    \end{cases}
%\end{align*}
%and (see for instance~\cite[Lemma 2.1]{BoCaMoNa}),
%\begin{equation}\label{G-N-H1-ineq2}
 %\|u\|^{2\sigma+2}_{2\sigma+2}\leq C_N(\sigma)\|\nabla u\|^{N-(\sigma +1)(N-4)}_2\|\Delta u\|^{(N-2)(\sigma +1)-N}_2,
 % \quad \mbox{for} \quad u \in  H^2(\R^N),
%\end{equation}
%where
%\begin{align*}
    % \begin{cases}
    % &\dfrac{2}{N-2}\leq\sigma, \hspace{.5cm} \mbox{if}\ N = 3,4,\\
   %  &\dfrac{2}{N-2}\leq\sigma<\dfrac{4}{N-4}, \hspace{.5cm} \mbox{if}\ N \geq 5.
   %  \end{cases}
%\end{align*}
We also use the following interpolation inequality,
\begin{equation}\label{interpolation}
\int_{\R^N} |\nabla u|^2 dx \leq \left( \int_{\R^N}|\Delta u|^2 dx \right)^{\frac{1}{2}} \left( \int_{\R^N}| u|^2 dx \right)^{\frac{1}{2}}, \quad \forall\ u \in H^2(\R^N).
\end{equation}
\medskip

\begin{lem}\label{lower-bound}
For any $\gamma >0$, $\beta \in \R$ and $u \in H^2(\R^N)$, we have
\begin{equation}\label{infimum}
\inf_{u \in H^2(\R^N)} \left( \frac{\gamma \|\Delta u\|_2^2 - \beta \|\nabla u\|_2^2}{\|u\|_2^2} \right) \geq  -\frac{\beta^2}{4 \gamma}.
\end{equation}
\end{lem}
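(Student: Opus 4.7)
The plan is to move to the Fourier side and reduce the inequality to a pointwise completing-the-square observation. Using the Fourier transform convention $\hat u(\xi) = \int_{\R^N} e^{-i\xi x} u(x)\,dx$ fixed in the Notation, Plancherel's theorem gives
\begin{equation*}
\|u\|_2^2 = (2\pi)^{-N}\!\int_{\R^N}|\hat u(\xi)|^2 d\xi,\quad \|\nabla u\|_2^2 = (2\pi)^{-N}\!\int_{\R^N}|\xi|^2|\hat u(\xi)|^2 d\xi,\quad \|\Delta u\|_2^2 = (2\pi)^{-N}\!\int_{\R^N}|\xi|^4|\hat u(\xi)|^2 d\xi.
\end{equation*}
Substituting these identities into $\gamma\|\Delta u\|_2^2 - \beta\|\nabla u\|_2^2 + \tfrac{\beta^2}{4\gamma}\|u\|_2^2$ converts the desired inequality into the statement
\begin{equation*}
\int_{\R^N}\left(\gamma|\xi|^4 - \beta|\xi|^2 + \tfrac{\beta^2}{4\gamma}\right)|\hat u(\xi)|^2\, d\xi \geq 0.
\end{equation*}

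The key step is then to observe that the weight inside the integral is a perfect square in the variable $|\xi|^2$:
\begin{equation*}
\gamma|\xi|^4 - \beta|\xi|^2 + \tfrac{\beta^2}{4\gamma} = \gamma\left(|\xi|^2 - \tfrac{\beta}{2\gamma}\right)^{\!2} \geq 0.
\end{equation*}
Hence the integrand is pointwise nonnegative and the integral is $\geq 0$; dividing by $\|u\|_2^2 > 0$ yields the lemma. There is no genuine obstacle here, as the entire proof reduces to completing the square; note nevertheless that the identified weight $\gamma(|\xi|^2 - \beta/(2\gamma))^2$ will reappear naturally in Theorem~\ref{th2} iv), foreshadowing the concentration of the rescaled Fourier mass on the sphere $\{|\xi|^2 = \beta/(2\gamma)\}$ when $\beta>0$.

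As an alternative that avoids Fourier analysis, one can argue directly using the interpolation inequality~\eqref{interpolation}: the case $\beta \leq 0$ is immediate, while for $\beta > 0$,~\eqref{interpolation} gives $\beta\|\nabla u\|_2^2 \leq \beta\|\Delta u\|_2\|u\|_2$, and the quadratic identity
\begin{equation*}
\gamma\|\Delta u\|_2^2 - \beta\|\Delta u\|_2\|u\|_2 + \tfrac{\beta^2}{4\gamma}\|u\|_2^2 = \left(\sqrt{\gamma}\,\|\Delta u\|_2 - \tfrac{\beta}{2\sqrt{\gamma}}\|u\|_2\right)^{\!2} \geq 0
\end{equation*}
again closes the argument. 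Either route establishes the bound~\eqref{infimum}.
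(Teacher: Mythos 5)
Your proposal is correct, and in fact it contains the paper's own proof verbatim as your ``alternative'': the paper argues exactly by applying the interpolation inequality~\eqref{interpolation} to replace $\beta\|\nabla u\|_2^2$ by $\beta\|\Delta u\|_2\|u\|_2$ and then completing the square as $\bigl(\sqrt{\gamma}\,\|\Delta u\|_2 - \tfrac{\beta}{2\sqrt{\gamma}}\|u\|_2\bigr)^2 \geq 0$. Your explicit separation of the case $\beta \leq 0$ is a small but genuine improvement in care: as written, the paper's intermediate inequality uses $-\beta\|\nabla u\|_2^2 \geq -\beta\|\Delta u\|_2\|u\|_2$, which only follows from~\eqref{interpolation} when $\beta \geq 0$ (for $\beta<0$ the conclusion is of course trivial since the left-hand side of~\eqref{infimum} is nonnegative). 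Your primary route, via Plancherel and the pointwise identity $\gamma|\xi|^4 - \beta|\xi|^2 + \tfrac{\beta^2}{4\gamma} = \gamma\bigl(|\xi|^2 - \tfrac{\beta}{2\gamma}\bigr)^2 \geq 0$, is a genuinely different and equally valid argument; it is slightly stronger in spirit because it identifies exactly where the quadratic form degenerates, namely on the sphere $|\xi|^2 = \beta/(2\gamma)$, which is precisely the mechanism the paper exploits later: the Cauchy--Schwarz argument on the Fourier side in Lemma~\ref{Mihai-1} (whose conclusion the authors note ``is precisely~\eqref{infimum}''), and the concentration statement~\eqref{4} in Theorem~\ref{th2} iv), both rest on the same weight. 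So your Fourier proof anticipates the structure the paper develops in Sections~\ref{sect-associated-problem} and~\ref{sect-limit}, while your interpolation proof coincides with the paper's argument for this lemma.
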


\begin{proof}
It directly follows from~\eqref{interpolation} that
\begin{align*}
\gamma \|\Delta u\|_2^2 - \beta \|\nabla u\|_2^2 + \frac{\beta^2}{4 \gamma}\|u\|_2^2   
% & =  \Big[ \gamma \|\Delta u\|_2^2 - \beta \|\nabla u\|_2^2 + % \frac{\beta^2}{4 \gamma}\|u\|_2^2    \Big]  \\
& \geq  \gamma \| \Delta u \|_2^2 - \beta \|\Delta u\|_2 \|u\|_2 + \frac{\beta^2}{4\gamma} \|u\|_2^2 =   \Big(  \sqrt{\gamma} \|\Delta u\|_2 - \frac{\beta}{2 \sqrt{\gamma}}\|u\|_2 \Big)^2 \geq 0
\end{align*}
% it follows that
% $$\gamma \|\Delta u\|_2^2 - \beta \|\nabla u\|_2^2 + \frac{\beta^2}{4 \gamma}\|u\|_2^2 \geq 0.$$
getting, thus, the sought inequality.
\end{proof}

\begin{lem}\label{coercive}
Assume $\gamma >0$, $\beta \in \R$, $\alpha \in \R$ and $0<\sigma N <4$. 
The functional $E$ is coercive on $S(c)$ and in particular $m(c) > - \infty$ for any $c>0$.
\end{lem}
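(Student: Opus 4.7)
The plan is to reduce coercivity to control of $\|\Delta u\|_2$ and then bound the remaining terms of $E$ using the two inequalities already assembled at the beginning of Section~\ref{preresults}. On $S(c)$ the $L^2$-norm is fixed at $\sqrt{c}$, and the interpolation inequality~\eqref{interpolation} gives $\|\nabla u\|_2^2 \le \sqrt{c}\,\|\Delta u\|_2$. Hence on $S(c)$ the full $H^2(\R^N)$-norm is controlled by $\|\Delta u\|_2 + 1$, so $\|u\|_{H^2} \to \infty$ forces $\|\Delta u\|_2 \to \infty$, and it suffices to show that $E(u) \to +\infty$ as $\|\Delta u\|_2 \to \infty$ with $u \in S(c)$.

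For the negative contributions to $E$ I would estimate the gradient term by the same interpolation, $\tfrac{|\beta|}{2}\|\nabla u\|_2^2 \le \tfrac{|\beta|\sqrt{c}}{2}\,\|\Delta u\|_2$, and the nonlinear term by the $H^2$-form of Gagliardo--Nirenberg~\eqref{G-N-H2-ineq}:
\[
\|u\|_{2\sigma+2}^{2\sigma+2} \;\le\; B_N(\sigma)\,c^{1+\sigma - \sigma N/4}\,\|\Delta u\|_2^{\sigma N/2}.
\]
Collecting these bounds would produce a lower estimate of the shape
\[
E(u) \;\ge\; \frac{\gamma}{2}\|\Delta u\|_2^2 \;-\; C_1\sqrt{c}\,\|\Delta u\|_2 \;-\; C_2(c,\sigma,\alpha)\,\|\Delta u\|_2^{\sigma N/2},
\]
where the last term is of course only a real subtraction when the nonlinear coefficient in $E$ is positive (otherwise it helps coercivity).

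Since the hypothesis $\sigma N < 4$ gives $\sigma N/2 < 2$, both subtracted powers of $\|\Delta u\|_2$ are strictly smaller than $2$ and are therefore dominated by $\tfrac{\gamma}{2}\|\Delta u\|_2^2$ as $\|\Delta u\|_2 \to \infty$. This proves coercivity on $S(c)$, and boundedness from below---hence $m(c) > -\infty$---is an immediate consequence. I do not expect any substantive obstacle here: the only points requiring attention are to use the $H^2$-version of Gagliardo--Nirenberg~\eqref{G-N-H2-ineq} rather than the $H^1$-version~\eqref{G-N-H1-ineq} (whose right-hand side involves $\|\nabla u\|_2$, which on $S(c)$ is itself controlled only a posteriori by $\|\Delta u\|_2$ via~\eqref{interpolation}), and to notice that the strict inequality $\sigma N < 4$ is precisely what makes the nonlinear correction genuinely sub-quadratic in $\|\Delta u\|_2$.
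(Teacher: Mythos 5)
Your argument is correct and is essentially the paper's proof: the paper also combines the Gagliardo--Nirenberg inequality~\eqref{G-N-H2-ineq} with the interpolation bound~\eqref{interpolation} (as in Lemma~\ref{lower-bound}) to dominate the subtracted terms by sub-quadratic powers of $\|\Delta u\|_2$ on $S(c)$, using $\sigma N<4$. Your handling of the signs of $\beta$ and $\alpha$ and the reduction of $H^2$-coercivity to $\|\Delta u\|_2\to\infty$ are exactly the details the paper leaves implicit.
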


\begin{proof}
The claim follows directly using~\eqref{G-N-H2-ineq} and arguing for example as in Lemma~\ref{lower-bound}.
\end{proof}

Let us now introduce a scaling  useful to the rest of the work: for any $u \in S(c)$ and any $s>0$, 
\begin{equation}\label{dilatation-louis}
u_{s}(x):=s^{\frac{N}{4}}u(\sqrt{s} x).
\end{equation}
This definition is clearly motivated by the fact that $\|u_s\|_2=\|u\|_2$ for all $s > 0$. One then easily obtain that
\begin{equation}\label{functional-louis}
E(u_{s})=\frac{\gamma s^{2}}{2}\int_{\R^N}|\Delta u|^2\, dx -\frac{\beta s}{2}\int_{\R^N}|\nabla u|^2\, dx-\frac{\alpha s^{\sigma N/2}}{2\sigma+2}\int_{\R^N}|u|^{2\sigma+2}\, dx, \quad \forall\ s > 0.
\end{equation}
Having at hand this suitable rescaling, we prove several properties of $m(c)$  to rule out the dichotomy of the minimizing sequences. 
\begin{lem}\label{prop1} Assume $\gamma >0$, $\beta >0$, $\alpha >0$ and $0<\sigma N <4$. 
%\rm \textbf{(Some properties of $ c \to \pmb{m(c)}$)} \it
\begin{itemize}
\item[i)] $m(c) < 0,\ \forall\ c>0$;
\item[ii)] $m(\tau c) \leq \tau m(c),\ \forall\ \tau > 1,\ \forall\ c>0$;
\item[iii)] Assume that there exists a global minimizer $u \in S(c)$ of $m(c)$ for some $c>0$. Then $m(\tau c) < \tau m(c),\, \forall\ \tau >1$;
\item[iv)] $m(c_1 + c_2) \leq m(c_1) + m(c_2),\ \forall\ c_1, c_2 >0$;
\item[v)] Assume that there exists a global minimizer $u \in S(c_1)$ with respect to $m(c_1)$ for some $c_1 >0$ and 
let $c_2 >0$. Then $m(c_1 + c_2) < m(c_1) + m(c_2)$.
\end{itemize}
\end{lem}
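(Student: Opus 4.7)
Parts (i), (ii), (iii) all hinge on the mass-preserving scaling~\eqref{dilatation-louis} and the related mass-rescaling $u \mapsto \sqrt{\tau}u$. For (i), I would take any $u \in S(c)$, plug $u_s$ into~\eqref{functional-louis}, and observe that the three terms are of orders $s^2$, $s$ and $s^{\sigma N/2}$ with $\sigma N/2 < 2$; hence at least one of the two negative contributions (the $\beta$-term, the nonlinear term, or their sum when $\sigma N=2$) beats the positive $s^2$-term as $s \to 0^+$, giving $E(u_s)<0$ for some $s$, hence $m(c)<0$. For (ii) and (iii), a direct computation yields the identity
\begin{equation*}
E(\sqrt{\tau}\,u) \;=\; \tau\,E(u) \;-\; \frac{\alpha(\tau^{\sigma+1}-\tau)}{2\sigma+2}\,\|u\|_{2\sigma+2}^{2\sigma+2},
\end{equation*}
in which $\sqrt{\tau}\,u\in S(\tau c)$. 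Since $\tau>1$ and $\sigma>0$ force $\tau^{\sigma+1}>\tau$, the correction is non-negative. Applying the identity along a minimizing sequence for $m(c)$ and passing to the limit proves (ii); applying it to a minimizer $u$, where $\|u\|_{2\sigma+2}>0$ since $u\not\equiv 0$, makes the correction strictly positive, which yields $m(\tau c) \le E(\sqrt{\tau}\,u) < \tau\,m(c)$, i.e.\ (iii).

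For (iv), I would use the classical disjoint-support construction. Given $\epsilon>0$, pick $u_i\in S(c_i)$ with $E(u_i)<m(c_i)+\epsilon$, approximate each by a compactly supported function via a cutoff $\chi_R u_i$ and mass-renormalization $\tilde u_i := \sqrt{c_i}\,\chi_R u_i/\|\chi_R u_i\|_2 \in S(c_i)$ (continuity of $E$ on $H^2$ via the Gagliardo--Nirenberg inequality~\eqref{G-N-H2-ineq} ensures $E(\tilde u_i)\to E(u_i)$ as $R\to\infty$), and translate $\tilde u_2$ by a vector $y$ large enough that its support is disjoint from that of $\tilde u_1$. Then $\tilde u_1+\tilde u_2(\cdot-y)\in S(c_1+c_2)$ with energy exactly $E(\tilde u_1)+E(\tilde u_2)$; letting $R\to\infty$ and $\epsilon\to 0$ delivers $m(c_1+c_2)\le m(c_1)+m(c_2)$.

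Part (v) is the subtle one and the main obstacle. A naive disjoint-supports argument only produces $\le$, and trying to deduce (v) solely from (iii) with $\tau=(c_1+c_2)/c_1$ forces the inequality $\tfrac{c_2}{c_1}m(c_1)\le m(c_2)$, which (ii) provides only in the regime $c_2\le c_1$. To treat all $c_2>0$ uniformly I would introduce the quotient
\begin{equation*}
g(c) \;:=\; \frac{m(c)}{c}, \qquad c>0.
\end{equation*}
Dividing (ii) by $\tau c$ says $g$ is non-increasing on $(0,\infty)$, while (iii) applied to the minimizer at $c_1$ upgrades this to strict decrease past $c_1$: $g(c)<g(c_1)$ for every $c>c_1$. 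In particular $g(c_1+c_2)<g(c_1)$ and $g(c_1+c_2)\le g(c_2)$. Splitting
\begin{equation*}
m(c_1+c_2) \;=\; c_1\,g(c_1+c_2) \;+\; c_2\,g(c_1+c_2),
\end{equation*}
and multiplying the strict bound by $c_1>0$ and the weak bound by $c_2>0$, one obtains $m(c_1+c_2) < c_1 g(c_1) + c_2 g(c_2) = m(c_1)+m(c_2)$, which is (v).
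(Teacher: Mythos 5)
Your proposal is correct. Parts i)--iii) follow essentially the paper's own argument: the same mass-preserving scaling $u_s$ for i), and for ii)--iii) the same rescaling $u\mapsto\sqrt{\tau}\,u$ together with the identity $E(\sqrt{\tau}\,u)=\tau E(u)-\frac{\alpha(\tau^{\sigma+1}-\tau)}{2\sigma+2}\|u\|_{2\sigma+2}^{2\sigma+2}$, applied along a minimizing sequence and then at an actual minimizer to upgrade to strict inequality. Where you genuinely diverge is iv): you invoke the classical disjoint-support construction (cut off, renormalize the mass, translate far apart), whereas the paper deduces subadditivity purely algebraically from the homogeneity bound ii), writing (with $c_2\le c_1$) $m(c_1+c_2)\le\frac{c_1+c_2}{c_1}m(c_1)=m(c_1)+\frac{c_2}{c_1}m\bigl(\frac{c_1}{c_2}c_2\bigr)\le m(c_1)+m(c_2)$. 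Your route is the more robust and standard one in concentration-compactness arguments (it would survive in settings where the scaling inequality ii) is unavailable), at the cost of an $H^2$-approximation step and a continuity argument for $E$; the paper's route is shorter and needs no test-function surgery once ii) is established. For v), the paper only remarks that one should follow the proof of iii) using iv); your reformulation via $g(c)=m(c)/c$ (non-increasing by ii), strictly decreasing past $c_1$ by iii), then splitting $m(c_1+c_2)=c_1g(c_1+c_2)+c_2g(c_1+c_2)$) is a correct and cleanly organized way of making that hint precise, and it handles the two regimes $c_2\le c_1$ and $c_2>c_1$ uniformly, which is exactly the point you rightly identify as the only delicate step.
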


\begin{proof}
i) Taking an arbitrary $u \in S(c)$ and considering $u_s$ as defined in~\eqref{dilatation-louis}, we see from~\eqref{functional-louis} that $E(u_s) \to 0^-$ as $s \to 0$ and i) follows.
\smallbreak
\noindent ii) For any $\varepsilon >0$, there exists $u \in S(c)$ such that $E(u) \leq m(c) + \varepsilon$. 
Defining $\widetilde{u}(x) = \tau^{\frac{1}{2}} u(x)$ we observe that
$$ \|\widetilde{u}\|_2^2 = \tau \|u\|_2^2 = \tau c; \quad \|\Delta \widetilde{u}\|_2^2 = \tau \|\Delta u\|_2^2; \quad 
 \|\nabla \widetilde{u}\|_2^2 = \tau \|\nabla u\|_2^2 \quad \textup{ and } \quad  \| \widetilde{u}\|_{2 \sigma +2}^{2 \sigma +2} = \tau^{\sigma +1} \|u\|_{2 \sigma +2}^{2 \sigma +2}.$$ 
Hence, we have that
\begin{align}\label{equation-x}
m(\tau c) \leq E(\widetilde{u})  
& = \tau \Big[ \frac{\gamma}{2} \|\Delta u\|_2^2 - \frac{\beta}{2} \|\nabla u\|_2^2 - \frac{\alpha \tau^{\sigma }}{2 \sigma +2}  \|u\|_{2 \sigma +2}^{2 \sigma +2}  \Big]  \nonumber \\
&\ \ < \tau \Big[ \frac{\gamma}{2} \|\Delta u\|_2^2 - \frac{\beta}{2} \|\nabla u\|_2^2 - \frac{\alpha}{2 \sigma +2}  \|u\|_{2 \sigma +2}^{2 \sigma +2}  \Big] \\
&\ \  = \tau E(u) \leq \tau (m(c) +  \varepsilon). \nonumber 
\end{align}
Since $\varepsilon>0$ is arbitrary, we see that ii) holds.
\smallbreak
\noindent iii) If $m(c)$ is achieved, for example, at some $u \in S(c)$, then we can set $\varepsilon=0$ in~\eqref{equation-x} and thus the strict inequality follows.
\smallbreak
\noindent iv) Assume, without loss of generality, $0 < c_2 \leq c_1$. Then, by ii), we have that
\begin{align*}
m(c_1 + c_2) & \leq  \frac{c_1 +c_2}{c_1} m(c_1)  = m(c_1) + \frac{c_2}{c_1}m(c_1) =  m(c_1) + \frac{c_2}{c_1}m(\frac{c_1}{c_2}c_2) \\
&\leq  m(c_1) + \frac{c_2}{c_1} \frac{c_1}{c_2} m(c_2) = m(c_1) + m(c_2).
\end{align*}
% The case $0 < c_1 < c_2$ can be treated reversing the role of $c_1$ and $c_2$.
\smallbreak
\noindent v) The proof follows the lines of the one of iii) using iv) instead of i).
% Assume first that $0 < c_2 \leq c_1$. Then, using iii), observe that, if $m(c_1)$ is reached, we can write
% \begin{align*}
% m(c_1 + c_2) & <  \frac{c_1 +c_2}{c_1} m(c_1)  = m(c_1) + \frac{c_2}{c_1}m(c_1) =  m(c_1) + \frac{c_2}{c_1}m(\frac{c_1}{c_2}c_2) \\
% &\leq  m(c_1) + \frac{c_2}{c_1} \frac{c_1}{c_2} m(c_2) = m(c_1) + m(c_2).
% \end{align*}
% As in iv), the case $0 < c_1 < c_2$ can be treated reversing the role of $c_1$ and $c_2$.
\end{proof}

\begin{lem}\label{lemma5}
Let $\{u_n\} \subset H^2(\R^N)$ be a bounded sequence such that $\|u_n\|_2^2 \to c >0$ and let $\widetilde{u}_n = \displaystyle c^{\frac{1}{2}} \frac{u_n}{\|u_n\|_2}.$  Then
\begin{itemize}
\item[i)] $\widetilde{u}_n \in S(c)$, $\forall n \in \N$.
\item[ii)] $\lim_{n \to \infty} |E(u_n) - E(\widetilde{u}_n)| = 0 $.
\end{itemize}
\end{lem}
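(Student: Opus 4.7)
Part (i) is immediate from the definition of $\widetilde u_n$: one computes $\|\widetilde u_n\|_2^2 = c\,\|u_n\|_2^2/\|u_n\|_2^2 = c$, so $\widetilde u_n \in S(c)$ for every $n \in \N$.

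For part (ii), the plan is to set $\alpha_n := \sqrt{c}/\|u_n\|_2$, so that $\widetilde u_n = \alpha_n u_n$ and, by the assumption $\|u_n\|_2^2 \to c > 0$, one has $\alpha_n \to 1$ as $n \to \infty$. Exploiting the homogeneity of each of the three integrals appearing in the definition~\eqref{def:E} of $E$, I would expand
$$E(\widetilde u_n) - E(u_n) = \frac{\gamma(\alpha_n^2-1)}{2}\|\Delta u_n\|_2^2 - \frac{\beta(\alpha_n^2-1)}{2}\|\nabla u_n\|_2^2 - \frac{\alpha_n^{2\sigma+2}-1}{2\sigma+2}\|u_n\|_{2\sigma+2}^{2\sigma+2}.$$

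The next step is to check that each of the three norms on the right-hand side is uniformly bounded in $n$. The $H^2$-boundedness of $\{u_n\}$ directly controls $\|\Delta u_n\|_2$ and $\|\nabla u_n\|_2$; for $\|u_n\|_{2\sigma+2}^{2\sigma+2}$ I would invoke the Gagliardo--Nirenberg inequality~\eqref{G-N-H2-ineq}, which applies in our range $0 < \sigma N < 4$ and reduces the bound to the already-controlled quantities $\|\Delta u_n\|_2$ and $\|u_n\|_2$. Since $\alpha_n \to 1$ forces both $\alpha_n^2 - 1 \to 0$ and $\alpha_n^{2\sigma+2} - 1 \to 0$, each of the three summands in the expression above tends to zero, whence $|E(u_n) - E(\widetilde u_n)| \to 0$.

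This lemma is essentially a continuity statement for $E$ under an asymptotic $L^2$-renormalization, and I do not anticipate any real obstacle. The only point requiring minor care is that the exponent $2\sigma+2$ lies in the admissible range of the Gagliardo--Nirenberg inequality, but this is guaranteed by the standing assumption $0 < \sigma N < 4$.
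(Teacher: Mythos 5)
Your proposal is correct and follows essentially the same route as the paper: exploit the homogeneity of each term in $E$ under the rescaling $\widetilde u_n = \alpha_n u_n$ with $\alpha_n = \sqrt{c}/\|u_n\|_2 \to 1$, and use the boundedness of $\{u_n\}$ in $H^2(\R^N)$ (via Gagliardo--Nirenberg for the $L^{2\sigma+2}$ term, which the paper leaves implicit) to conclude that each difference tends to zero.
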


\begin{proof}
Point i) is immediate. On point ii), we directly observe that
$$ \|\Delta \widetilde{u}_n\|_2^2 = \frac{c}{\|u_n\|_2^2} \,  \|\Delta u_n\|_2^2; \quad 
\|\nabla \widetilde{u}_n\|_2^2 = \frac{c}{\|u_n\|_2^2} \,  \|\nabla u_n\|_2^2 \quad \textup{ and } \quad  \|\widetilde{u}_n\|_{2 \sigma +2}^{2 \sigma +2} = \Big( \frac{c}{\|u_n\|_2^2} \Big)^{\sigma +1} \,  
\| u_n\|_{2 \sigma +2}^{2 \sigma +2}.$$
% Hence, we have that
% \begin{align*}
% |E(u_n) - E(\widetilde{u}_n)| & = \left|\frac{\gamma}{2}\Big(1 - \frac{c}{\|u_n\|_2^2}\Big)\, \|\Delta u_n\|_2^2 - 
% \frac{\beta}{2}\Big(1 - \frac{c}{\|u_n\|_2^2}\Big)\, \|\nabla u_n\|_2^2   - \frac{1}{2 \sigma +2} \Big(1 - \Big(\frac{c}{\|u_n\|_2^2}\Big)^{\sigma +1} \Big) \, \| u_n\|_{2 \sigma +2 }^{2 \sigma +2}\right|
% \\
% & \leq \frac{\gamma}{2} \, \left|1 - \frac{c}{\|u_n\|_2^2}\right| \, \|\Delta u_n\|_2^2 + 
% \frac{\beta}{2}\left|1 - \frac{c}{\|u_n\|_2^2}\right| \, \|\nabla u_n\|_2^2  + \frac{1}{2 \sigma +2} \, \left|1 - \Big(\frac{c}{\|u_n\|_2^2}\Big)^{\sigma +1} \right| \, 
% \| u_n\|_{2 \sigma +2 }^{2 \sigma +2}.
% \end{align*}
Since $\lim_{n \to \infty} \frac{c}{\|u_n\|_2^2}=1$ and $\{u_n\} \subset H^2(\R^N)$ is bounded, the result follows.
\end{proof}

The next statement ensures that either a minimizing sequence is vanishing or it is precompact up to translations. In other words, the non-vanishing rules out the possibility of dichotomy.

\begin{lem}\label{alternative}
Let $c>0$. If $\{u_n\} \subset S(c)$ is a minimizing sequence with respect to $m(c)$ then one of the following alternative holds:
\begin{itemize}
\item[i)] (vanishing) For all $R >0$, $$\lim_{n \to  \infty} \sup_{y \in \R^N} \int_{B_{R}(y)} |u_n|^2 dx  =0.$$
\item[ii)] (compactness up to translations)  There exists  (up to a subsequence) $u \in S(c)$ and a family $\{y_n\} \subset \R^N$ such that 
$u_n(\cdot - y_n) \to u$ in $H^2(\R^N).$ In particular $u$ is a global minimizer.
\end{itemize}
\end{lem}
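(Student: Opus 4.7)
The plan is to apply a concentration-compactness style argument, reducing the non-vanishing alternative to strong $H^2$-convergence via the subadditivity properties already established in Lemma~\ref{prop1}. First, by coercivity (Lemma~\ref{coercive}) the minimizing sequence $\{u_n\}$ is bounded in $H^2(\R^N)$. Assuming that the vanishing alternative i) fails, a standard $L^2$-concentration lemma in the spirit of P.-L. Lions yields constants $R,\delta>0$ and translations $\{y_n\} \subset \R^N$ with $\int_{B_R(y_n)}|u_n|^2\,dx \geq \delta$ for $n$ large. Setting $\widetilde{u}_n:=u_n(\cdot - y_n)$, translation invariance of $E$ keeps $\{\widetilde{u}_n\}$ a minimizing sequence, so up to a subsequence $\widetilde{u}_n \rightharpoonup u$ weakly in $H^2$, strongly in $L^2_{\mathrm{loc}}$, and a.e. The local $L^2$-lower bound then forces $u \not\equiv 0$, and I would set $c_1:=\|u\|_2^2 \in (0,c]$ and aim to show that $c_1 = c$.

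Writing $v_n := \widetilde{u}_n - u$, one has $v_n \rightharpoonup 0$ weakly in $H^2$ and a.e. The Hilbert space orthogonality identity then gives
$$\|\Delta \widetilde{u}_n\|_2^2 = \|\Delta u\|_2^2 + \|\Delta v_n\|_2^2 + o(1),$$
and analogously for $\|\nabla \cdot\|_2^2$ and $\|\cdot\|_2^2$, while the Brezis-Lieb lemma supplies the same additive splitting for $\|\cdot\|_{2\sigma+2}^{2\sigma+2}$. Combining these identities,
$$E(\widetilde{u}_n) = E(u) + E(v_n) + o(1), \qquad \|v_n\|_2^2 = c - c_1 + o(1).$$

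The main obstacle, corresponding to Lions' dichotomy, is to exclude $0 < c_1 < c$. I would argue by contradiction: if $0 < c_1 < c$, applying Lemma~\ref{lemma5} to $\{v_n\}$ with target mass $c - c_1 > 0$ produces $\widetilde{v}_n \in S(c - c_1)$ with $E(v_n) = E(\widetilde{v}_n) + o(1) \geq m(c - c_1) + o(1)$. Since also $E(u) \geq m(c_1)$, passing to the $\liminf$ in the splitting of $E(\widetilde{u}_n)$ gives $m(c) \geq m(c_1) + m(c - c_1)$. Lemma~\ref{prop1} iv) provides the reverse inequality, hence equality holds; but then necessarily $E(u) = m(c_1)$, so $u$ is a minimizer for $m(c_1)$. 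This triggers Lemma~\ref{prop1} v), which yields the strict inequality $m(c) < m(c_1) + m(c - c_1)$, a contradiction.

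Therefore $c_1 = c$, so $u \in S(c)$ and $\|v_n\|_2 \to 0$. The interpolation inequality~\eqref{interpolation} together with the $H^2$-boundedness of $v_n$ gives $\|\nabla v_n\|_2 \to 0$, while $\sigma N < 4$ and the Gagliardo-Nirenberg inequality~\eqref{G-N-H2-ineq} give $\|v_n\|_{2\sigma+2} \to 0$. Consequently $E(v_n) = \tfrac{\gamma}{2}\|\Delta v_n\|_2^2 + o(1)$. Using $E(\widetilde{u}_n) \to m(c)$ and $E(u) \geq m(c)$ (since $u \in S(c)$), the splitting forces $\|\Delta v_n\|_2 \to 0$, hence $v_n \to 0$ in $H^2$ and $\widetilde{u}_n \to u$ in $H^2$. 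In particular $E(u) = m(c)$, which is exactly alternative ii).
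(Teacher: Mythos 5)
Your proposal is correct and follows essentially the same route as the paper's proof: coercivity gives boundedness, non-vanishing gives a nontrivial weak limit after translation, the Brezis--Lieb type splitting $E(\widetilde{u}_n)=E(u)+E(v_n)+o(1)$ combined with Lemma~\ref{lemma5} and the subadditivity/strict subadditivity of Lemma~\ref{prop1} (parts iv) and v)) rules out $0<c_1<c$, and the final strong convergence is obtained exactly as in the paper by showing $\liminf E(v_n)\geq 0$ and $\limsup E(v_n)\leq 0$. No gaps; the argument matches the paper's proof of Lemma~\ref{alternative} step for step.
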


\begin{proof}
Suppose that $\{u_n\} \subset S(c)$ is a minimizing sequence with respect to $m(c)$ which does not satisfy i). Then, there 
exists $R_0 >0$ such that  (up to a subsequence)
$$ 0 < \lim_{n \to  \infty} \sup_{y \in \R^N} \int_{B_{R_{0}}(y)} |u_n|^2 dx \leq c,$$
and  there exists (up to a subsequence) a family 
$\{y_n\} \subset \R^N$ such that
\begin{equation}\label{eq:0}
0 < \lim_{n \to  \infty} \int_{B_{R_{0}}(y_n)} |u_n(x-y_n)|^2 dx \leq c.
\end{equation}
Since $\{u_n\}$ is a minimizing sequence, by Lemma~\ref{coercive}, we deduce that $\{u_n\}$  is bounded in $H^2(\RN)$ and so (up to a subsequence) there exists $u \in H^2(\R^N)$ such that
\begin{equation}\label{convergence-1}
u_n(\cdot -y_n) \rightharpoonup u \mbox{ in } H^2(\R^N) \quad \mbox{and} \quad u_n(\cdot -y_n) \to u \mbox{ in } L_{loc}^p(\R^N), \quad \mbox{ for } 1 \leq p < \frac{2N}{(N-4)^+}.
\end{equation}
Using Rellich-Kondrashov theorem, observe that~\eqref{eq:0} implies that $u \not \equiv 0$. Now, we define $v_n := u_n(\cdot - y_n) - u$ and, by~\eqref{convergence-1}, we have that $v_n \rightharpoonup 0$ in $H^2(\R^N)$ and so, that

\begin{equation*}
\|\Delta u_n\|_2^2 = \|\Delta (u+v_n)\|_2^2 = \|\Delta u\|_2^2 + \|\Delta v_n\|_2^2 + o_n(1),
\end{equation*}
\begin{equation*}
\|\nabla u_n\|_2^2 = \|\nabla (u+v_n)\|_2^2 = \|\nabla u\|_2^2 + \|\nabla v_n\|_2^2 + o_n(1),
\end{equation*}
and 
\begin{equation}\label{eq13}
\|u_n\|_2^2 = \|u+v_n\|_2^2 = \| u\|_2^2 + \|v_n\|_2^2 + o_n(1).
\end{equation}
On the other hand, by the Brezis-Lieb lemma~\cite[Theorem 1]{BrLi},
\begin{equation*}
\|u_n\|_{2 \sigma +2}^{2 \sigma +2} = \|u+v_n\|_{2 \sigma +2}^{2 \sigma +2} = \| u\|_{2 \sigma +2}^{2 \sigma +2} +
 \|v_n\|_{2 \sigma +2}^{2 \sigma +2} + o_n(1).
\end{equation*}
Hence, we have that 
\begin{equation}\label{eq11}
E(u_n) = E(u_n(\cdot -y_n)) = E(u+v_n) = E(u) + E(v_n) + o_n(1).
\end{equation}
\bigbreak
\noindent \textbf{Claim: } $\|v_n\|_2^2 \to 0$ \textit{ as } $n \to \infty$.
\smallbreak In order to prove this, let us denote $c_1 = \|u\|_2^2 > 0$. By~\eqref{eq13}, if we show that $c_1 =c$, the claim follows. We assume by contradiction that $c_1 <c$ and we define
$$\widetilde{v}_n = \frac{(c-c_1)^{\frac{1}{2}}}{\|v_n\|_2}\,v_n .$$
By Lemma~\ref{lemma5} and~\eqref{eq11}, it follows that
$$E(u_n) = E(u) + E(v_n) + o_n(1) = E(u) + E(\widetilde{v}_n) + o_n(1) \geq E(u) + m(c - c_1) + o_n(1).$$
Hence,  by Lemma~\ref{prop1}, iv), we have that
\begin{equation}\label{eq15}
m(c) \geq E(u) + m(c- c_1) \geq m(c_1) + m(c-c_1) \geq m(c),
\end{equation}
and so, $E(u) = m(c_1)$, namely $u$ is global minimizer with respect to $c_1$. Thus, by Lemma~\ref{prop1}, v), we have that
$$ m(c) < m(c_1) + m(c-c_1),$$
which contradicts~\eqref{eq15}. Hence, the claim follows and $\|u\|_2^2 =c$. 

\medbreak 
At this point, since $\{v_n\}$ is a bounded sequence in $H^2(\RN)$, it follows from~\eqref{G-N-H2-ineq} and~\eqref{interpolation} respectively that $\|v_n\|_{2\sigma + 2}^{2\sigma + 2} \to 0$ and 
$\|\nabla v_n\|_2^2 \to 0$ as $n \to \infty$.
% and so, by~\eqref{G-N-H1-ineq}, also that $\|v_n\|_{2 \sigma +2}^{2 \sigma +2} \to 0$.
Thus, we obtain that
\begin{equation}\label{EX1}
 \liminf_{n \to \infty} E(v_n) = \liminf_{n \to \infty} \frac{\gamma}{2} \|\Delta v_n\|_2^2 \geq 0.
\end{equation}
On the other hand, since $\|u\|_2^2= c$, we deduce from~\eqref{eq11} that
$$E(u_n) = E(u) + E(v_n) + o_n(1) \geq m(c) + E(v_n) + o_n(1),$$
and so, that
\begin{equation}\label{EX2}
 \limsup_{n \to \infty} E(v_n) \leq 0.
\end{equation}
From~\eqref{EX1} and~\eqref{EX2} we deduce that $\|\Delta v_n\|_2^2 \to 0$ as $n \to \infty$ and so, that $u_n(\cdot - y_n) \to u$ in $H^2(\R^N).$
\end{proof}

\section{An associated minimization problem} \label{sect-associated-problem}

In this section we present a result we use for the proofs of Theorems~\ref{th1} and~\ref{th2} but of independent interest. %which have also, we believe, an interest by himself. 
Moreover, it enlightens the difficulty of the minimization problem for $m(c)$.  Let us introduce 
$$I(u) :=\frac{\gamma}{2}\int_{\R^N}|\Delta u|^2\, dx -\frac{\beta}{2}\int_{\R^N}|\nabla u|^2\, dx,$$
and consider the constrained minimization problem
$$m_I(c) := \inf_{u \in S(c)}I(u).$$ 

\begin{lem}\label{Mihai-1}
For all $c>0$, it follows that:
\begin{itemize}
\item[i)] $m_I(c) = - \frac{\beta^2}{8 \gamma}\, c.$
\item[ii)] The infimum is never achieved.
\item[iii)] All minimizing sequences are vanishing.
\end{itemize}
\end{lem}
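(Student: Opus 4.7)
My plan is to recast $I$ on the Fourier side, where a single completion of the square makes the whole lemma transparent. Using Plancherel's theorem (with the convention fixed in the Notation), one obtains the identity
\begin{equation*}
2\,I(u) + \frac{\beta^2}{4\gamma}\|u\|_2^2 \;=\; \frac{\gamma}{(2\pi)^{N}} \int_{\R^N} \Bigl(|\xi|^2 - \tfrac{\beta}{2\gamma}\Bigr)^{2} |\wh{u}(\xi)|^2 \, d\xi \;\geq\; 0,
\end{equation*}
which is nothing but the Fourier incarnation of Lemma~\ref{lower-bound} and immediately gives $m_I(c) \geq -\frac{\beta^2}{8\gamma}\,c$. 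The strategy exploits the fact that the right-hand integrand vanishes precisely on the sphere $\Sigma := \{\xi \in \R^N : |\xi|^2 = \beta/(2\gamma)\}$, a set of zero Lebesgue measure in $\R^N$.

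For the matching upper bound in (i), I would construct test functions $\{v_\eps\}$ whose Fourier mass concentrates on $\Sigma$. Fix any $\xi_0 \in \Sigma$ and a cutoff $\chi \in C^\infty_c(\R^N)$ with $\chi \not\equiv 0$, and define $v_\eps$ through $\wh{v_\eps}(\xi) := \chi\bigl((\xi - \xi_0)/\eps\bigr)$. A change of variable gives $\|v_\eps\|_2^2 = O(\eps^N)$, while on the support of $\wh{v_\eps}$ one has $|\xi|^2 - \beta/(2\gamma) = O(\eps)$, so the right-hand side of the identity is $O(\eps^{N+2})$. Dividing yields $I(v_\eps)/\|v_\eps\|_2^2 \to -\beta^2/(8\gamma)$, and since both $I$ and the $L^2$-norm are $2$-homogeneous, a trivial rescaling produces elements of $S(c)$ with the same ratio. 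This proves (i). Point (ii) is then immediate: a minimiser $u \in S(c)$ would force $(|\xi|^2 - \beta/(2\gamma))\wh{u}(\xi) = 0$ a.e., so $\wh{u}$ is supported on $\Sigma$, hence $u \equiv 0$, contradicting $\|u\|_2^2 = c > 0$.

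For (iii), let $\{u_n\} \subset S(c)$ be minimising. The identity gives $\|{-}\Delta u_n - \tfrac{\beta}{2\gamma} u_n\|_2 \to 0$, and together with $\|u_n\|_2 = \sqrt{c}$ the triangle inequality bounds $\|\Delta u_n\|_2$, so $\{u_n\}$ is bounded in $H^2(\R^N)$ (using~\eqref{interpolation} for the gradient term). Suppose toward a contradiction that $\{u_n\}$ does not vanish; then there exist $R>0$, $\delta>0$ and $\{y_n\}\subset \R^N$ with $\int_{B_R(y_n)}|u_n|^2\,dx \geq \delta$ for $n$ large. Up to a subsequence, $u_n(\cdot - y_n) \rightharpoonup u$ weakly in $H^2$ with $u \not\equiv 0$ (by Rellich-Kondrashov). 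Since the relation $-\Delta u_n - \tfrac{\beta}{2\gamma} u_n \to 0$ in $L^2$ is translation-invariant, passing to the weak $L^2$-limit yields $-\Delta u = \tfrac{\beta}{2\gamma} u$; Fourier-transforming this eigenvalue equation forces $\wh{u}$ to be supported on $\Sigma$, hence $u \equiv 0$, a contradiction.

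I expect the main obstacle to be the test-function construction of (i): one must genuinely exploit that $\Sigma$ has zero Lebesgue measure while still producing honest $L^2$ functions, so the ansatz has to live at a continuum of scales. No single-scale bump (e.g.\ a rescaled Gaussian) would deliver the correct limit ratio. Once this Fourier-concentration picture is in place, both (ii) and (iii) fall out of the same quadratic identity with essentially no extra work.
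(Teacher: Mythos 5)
Your argument is correct, and for parts (i) and (ii) it is essentially the paper's: both proofs reduce everything to the Fourier-side observation that the quadratic form $\gamma|\xi|^4-\beta|\xi|^2+\tfrac{\beta^2}{4\gamma}$ is a perfect square vanishing only on the sphere $\Sigma=\{|\xi|^2=\beta/2\gamma\}$, and both obtain the upper bound in (i) by concentrating $\wh{u}$ at a single nonzero frequency (the paper first minimizes along the ray $u_s$ to reduce to the quotient $R(u)=\|\nabla u\|_2^4/(\|\Delta u\|_2^2\|u\|_2^2)$ and then concentrates $\wh{u}_n$ at an arbitrary point, letting the optimal $s$ place the concentration on $\Sigma$; you complete the square once and concentrate directly at $\xi_0\in\Sigma$, which is marginally more direct). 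Where you genuinely diverge is (iii): the paper rules out non-vanishing minimizing sequences by re-running the concentration--compactness dichotomy argument of Lemma~\ref{alternative} for $I$, concluding that non-vanishing would produce a minimizer of $m_I(c_1)$ for some $c_1\in(0,c]$, contradicting (ii). You instead observe that the quadratic identity forces $-\Delta u_n-\tfrac{\beta}{2\gamma}u_n\to 0$ in $L^2$, so any nontrivial weak limit of translates would be an $L^2$ eigenfunction of $-\Delta$, which cannot exist since its Fourier transform would be supported on the null set $\Sigma$. This is more elementary and self-contained: it avoids the Brezis--Lieb splitting and the strict subadditivity machinery entirely, at the price of being specific to the quadratic functional $I$ (the paper's route recycles a lemma it needs anyway for $E$). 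One small caveat: your closing remark that ``no single-scale bump would deliver the correct limit ratio'' and that the ansatz ``has to live at a continuum of scales'' is misleading --- your own $v_\eps$ is a single-scale bump in Fourier variables, and the paper's later test functions in Lemma~\ref{function-test-1} are precisely rescaled Gaussians centered at $e_1$ on the Fourier side; what fails is only physical-space dilation, which concentrates $\wh{u}$ at the origin. This does not affect the validity of the proof.
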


\begin{proof}
First observe that if, for some $c>0$, $u$ is a minimizer of $m_I(c)$, then, for any $c_1 >0$, $\left(\frac{c_1}{c}\right)^{1/2}u$ is a minimizer of $m_{I}(c_1)$. Hence, if a minimizer exists for some $c_0 >0$, it exists for any $c>0$. 

\medbreak
Let $c>0$ arbitrary but fixed, for $u \in S(c)$ arbitrary, let us first minimize $I$ along the ray defined by $u_s$. From the definition~\eqref{dilatation-louis} we see that the restriction of $I$ is given by 
\begin{align} \label{functional}
I(u_{s})=\frac{\gamma s^{2}}{2}\int_{\R^N}|\Delta u|^2\, dx -\frac{\beta s}{2}\int_{\R^N}|\nabla u|^2\, dx.
\end{align}
Then, computing the minimum of the function on $s$, one easily gets that 
\begin{equation*} 
 \inf_{ s >0}I(u_{ s}) = -\frac{\beta^2}{8\gamma} \frac{\|\nabla u\|_2^4}{\|\Delta u\|_2^2} 
\end{equation*}
%  -  \dfrac{\beta^2}{8 \gamma}\,  \frac{\left( \int_{\R^N}|\nabla u|%^2 \, dx\right)^2}{\int_{\R^N}|\Delta u|^2 \, dx}.
Thus, we deduce that
\begin{equation}\label{expressionduminimumdeI}
m_I(c) =  \inf_{u \in S(c)} \left[-\frac{\beta^2}{8\gamma} \frac{\|\nabla u\|_2^4}{\|\Delta u\|_2^2}  \right],
\end{equation}
or equivalently
\begin{equation*}
\begin{aligned}
m_{I}(c) & = \inf_{u \in S(c)} \left[ -\frac{\beta^2}{8\gamma}\, c\,  \frac{\|\nabla u\|_2^4}{\|\Delta u\|_2^2 \|u\|_2^2} \right]  = \inf_{ u \in H^2(\RN) \setminus \{0\}} \left[ -\frac{\beta^2}{8\gamma}\, c\,  \frac{\|\nabla u\|_2^4}{\|\Delta u\|_2^2 \|u\|_2^2} \right] = -\frac{\beta^2}{8\gamma}\, c \sup_{  u \in H^2(\RN) \setminus \{0\}} R(u),
\end{aligned}
\end{equation*}
%\begin{align*}
%m_I(c) & = \inf_{u \in S(c)} \Big[ - \frac{\beta^2}{8 \gamma} \, c \, \frac{(\int_{\R^N}|\nabla u|^2 \, dx)^2}{(\int_{\R^N}|\Delta u|^2 \, dx) (\int_{\R^N}|u|^2 \, dx)}  \Big] \\
%& = \inf_{u \in H^2(\R^N)\backslash \{0\}} \Big[ - \frac{\beta^2}{8 %\gamma} \, c \, \frac{(\int_{\R^N}|\nabla u|^2 \, dx)^2}{(\int_{\R^N}|\Delta u|^2 \, dx) (\int_{\R^N}|u|^2 \, dx)}  \Big] 
%\\
%& =   - \frac{\beta^2}{8 \gamma} \, c \, \sup_{u \in H^2(\R^N) %\setminus \{0\}} R(u)
%\end{align*}
where we have set 
\begin{equation*}
R(u) :=  \frac{\|\nabla u\|_2^4}{\|\Delta u\|_2^2 \|u\|_2^2} =  \frac{\left(\int_{\R^N}|\nabla u|^2 \, dx\right)^2}{\left( \int_{\R^N}|\Delta u|^2 \, dx\right) \left( \int_{\RN} |u|^2 dx \right)}.
\end{equation*}
At this point we deduce that $u$ is a minimizer of $m_I(c)$ if and only if it is a maximizer of
\begin{equation*}
 \sup_{u \in H^2(\R^N)\backslash \{0\}} R(u).
\end{equation*}
Let us then show that this supremum is equal to $1$ and never achieved. Using the Fourier transform, we get
\begin{equation*}
R(u) = \frac{\left(\int_{\R^N}|\xi|^2 |\hat{u}(\xi)|^2 \, d \xi\right)^2}{\left(\int_{\R^N}|\xi|^4 |\hat{u}(\xi)|^2 \, dx\right) \left(\int_{\R^N}|\hat{u}(\xi)|^2\, dx\right)}.
\end{equation*}
By Cauchy-Schwarz inequality, 
% \begin{equation}\label{C-S}
% \left( \int_{\R^N} |fg| \, d \xi \right)^2 \leq \left( \int_{\R^N}|f|^2 \, d \xi\right) \left( \int_{\R^N}|g|^2 \, d \xi \right)
% \end{equation}
it follows that $R(u) \leq 1$ (note that this information is precisely~\eqref{infimum}). Let us now construct a sequence $\{u_n\} \subset H^2(\R^N)$ such that $R(u_n) \to 1$. This will prove that 
$$m_I(c) = - \frac{\beta^2}{8 \gamma} \, c.$$
For $\phi \in C_0^{\infty}(\R^N)\backslash \{0\}$ we define the sequence $\phi_n(x) = n^{\frac{N}{2}}\phi(n(x-1))$ and we note that
$\|\phi_n\|_2^2 = \|\phi\|_2^2$, for all $n \in \N$. Now, we define $\{u_n\}$ as  $\hat{u}_n(\xi) = \phi_n(\xi)$.
%$u_n(x) = \mathcal{F}^{-1}(\phi_n(x))$,
and we get that
\begin{equation*}
R(u_n) = \frac{\left(\int_{\R^N} |1 + \frac{y}{n}|^2 \phi^2(y)\, dy\right)^2}{\left(\int_{\R^N} |1 + \frac{y}{n}|^4 \phi^2(y)\, dy\right) \|\phi\|_2^2}.
\end{equation*}
By dominated convergence, we obtain $R(u_n) \to 1$ as $n \to \infty$. Hence, $\{u_n\}$ is the desired sequence and i) follows.
% Note also, still by direct calculations, that
%\begin{equation*}
%u_n(x) = n^{- \frac{N}{2}} \int_{\R^N} e^{ix(1 + \frac{y}{n})} \phi(y) \, dy
%\end{equation*}
%and thus
%\begin{equation*}
%|u_n(x)| \leq n^{- \frac{N}{2}} \|\phi\|_1 \to 0
%\end{equation*}
%proving that $\{u_n\}$ is vanishing. \medskip

\medbreak
Next, let us show that $R(u)=1$ never holds. If we assume by contradiction that there exists a $u \in H^2(\R^N)$ such that $R(u)=1$, this corresponds to the equality case in Cauchy-Schwarz and thus 
\begin{equation*}
|\xi|^4 \,  |\hat{u}(\xi)|^2 = \omega \, |\hat{u}(\xi)|^2 \mbox{ a. e. for some } \omega >0.
\end{equation*}
This is a contradiction with the fact that $\hat{u} \in L^2(\R^N) \backslash \{0\}$.

\medbreak
At this point, i) and ii) have been established. To conclude the proof it remains to show that any minimizing sequence of $m_I(c)$ is vanishing. Indeed, if for some $c>0$, we assume that there exists a non-vanishing minimizing sequence, then, following the proof of Lemma~\ref{alternative}, we get that there exists a $0 < c_1 \leq c$ such that $m_I(c_1)$ is reached. By ii) we know this cannot happen. Hence, the vanishing always holds.
\end{proof}

\begin{remark}\label{bottom-spectre} \label{bellazzini-frank-visciglia} $ $
\begin{itemize}
\item[a)] Let us consider the self-adjoint operator $S = \gamma \Delta^2 + \beta \Delta$ defined in $H^2(\RN)$. As a consequence of Lemma~\ref{Mihai-1}, see also Lemma~\ref{lower-bound}, we deduce that the infimum of the spectrum of $S$ is given by $-\beta^2/4\gamma$ and that it belongs to the essential spectrum. 
\item[b)] The proof of i) and ii) of the previous lemma relies on the fact that there does not exist a maximizer for the interpolation inequality~\eqref{interpolation}. This fact was already observed in~\cite[Example 2.1]{BeFaVi}. 
\end{itemize}
\end{remark}

\section{A Necessary and sufficient condition for the existence of a minimizer} \label{sect-existence}

The aim of this section is to give a necessary and sufficient condition for the existence of a minimizer of $m(c)$. In particular, this condition will show that a minimizer always exists if $c>0$ is sufficiently large. Defining
\begin{equation} \label{c0}
c_0 := \inf\left\{ c>0 : m(c) < - \frac{\beta^2}{8 \gamma}c\right\},
\end{equation}
we have the following result.

\begin{lem}\label{lem:existence} $ $
\begin{itemize}
\item[i)] $c_0 < \infty$.
\item[ii)] If $c_0 =0$, then $m(c) <- \frac{\beta^2}{8 \gamma}c$ and it is reached for any $c>0$.
\item[iii)] If $c_0 >0$, then:
\begin{itemize}
\item[a)] $m(c) = - \frac{\beta^2}{8 \gamma} c$ and it is not reached for any $c \in (0, c_0]$.
\item[b)] $ m(c) < - \frac{\beta^2}{8 \gamma} c$  and it is reached for any $c > c_0$.
\end{itemize}
\end{itemize}
\end{lem}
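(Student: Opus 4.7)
The plan is to establish the equivalence
\begin{equation*}
m(c) < -\frac{\beta^2}{8\gamma}\,c \iff m(c) \text{ is attained},
\end{equation*}
and deduce the three items from it together with continuity of $c \mapsto m(c)$ and the monotonicity statements of Lemma~\ref{prop1}. I record two preliminary facts: (a) $m(c) \leq m_I(c) = -\beta^2 c/(8\gamma)$ for every $c > 0$, since $E \leq I$ on $H^2(\R^N)$ (because $\alpha > 0$) and Lemma~\ref{Mihai-1} identifies $m_I$; (b) $c \mapsto m(c)$ is continuous on $(0,\infty)$, via the rescaling $u \mapsto \sqrt{c'/c}\,u$ between $S(c)$ and $S(c')$ together with Lemma~\ref{lemma5}.

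For the equivalence, ``strict inequality $\Rightarrow$ attained'' is handled by Lemma~\ref{alternative}: given a minimizing sequence $\{u_n\} \subset S(c)$, the vanishing alternative combined with Lions' lemma and \eqref{G-N-H2-ineq} forces $\|u_n\|_{2\sigma+2} \to 0$, so $\liminf E(u_n) = \liminf I(u_n) \geq -\beta^2 c/(8\gamma)$ by Lemma~\ref{lower-bound}, contradicting the hypothesis; hence precompactness up to translations and attainment. For the reverse direction ``attained $\Rightarrow$ strict inequality'', applied to a minimizer $u \in S(c)$, Lemma~\ref{prop1}(iii) gives $m(\tau c) < \tau m(c) \leq -\beta^2 \tau c/(8\gamma)$ for $\tau > 1$, and Lemma~\ref{prop1}(v) combined with fact (a) is then used to transfer the strict inequality down to $c$ itself.

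Item i) follows from a test function computation: for $\phi \in C_0^\infty(\R^N) \setminus \{0\}$ and $u_c = \sqrt{c/\|\phi\|_2^2}\,\phi \in S(c)$,
\begin{equation*}
E(u_c) = \frac{c\,I(\phi)}{\|\phi\|_2^2} - \frac{\alpha}{2\sigma+2}\,\frac{c^{\sigma+1}\,\|\phi\|_{2\sigma+2}^{2\sigma+2}}{\|\phi\|_2^{2\sigma+2}},
\end{equation*}
and since $\sigma > 0$ the $c^{\sigma+1}$ term dominates as $c \to \infty$, yielding $E(u_c) < -\beta^2 c/(8\gamma)$ for $c$ large, so $c_0 < \infty$. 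Items ii) and iii) then follow by reading off consequences. If $c_0 = 0$, for any $c > 0$ one picks $c' \in (0,c)$ with $m(c') < -\beta^2 c'/(8\gamma)$ and combines Lemma~\ref{prop1}(iv) with $m(c-c') \leq -\beta^2(c-c')/(8\gamma)$ to obtain strict inequality at $c$, hence attainment via the equivalence. If $c_0 > 0$, the definition of $c_0$ and fact (a) give $m(c) = -\beta^2 c/(8\gamma)$ on $(0, c_0)$; continuity extends this equality to $c = c_0$; non-attainment on $(0, c_0]$ follows from the equivalence; and for $c > c_0$, strict inequality at a point $c^* \in (c_0, c)$ is propagated to $c$ via Lemma~\ref{prop1}(iv)--(v), yielding attainment.

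The hardest step will be the ``attained $\Rightarrow$ strict inequality'' part of the equivalence. For $c \in (0, c_0)$, Lemma~\ref{prop1}(iii) with $\tau \in (1, c_0/c)$ gives $m(\tau c) < -\beta^2 \tau c/(8\gamma)$ while $\tau c < c_0$, contradicting the definition of $c_0$. At $c = c_0$, however, the $\tau > 1$ rescaling only probes masses above $c_0$ and standard scalings of a hypothetical minimizer at $c_0$ tend to produce test functions at $c < c_0$ with energy at or above $-\beta^2 c/(8\gamma)$; ruling out a minimizer at $c_0$ therefore requires a subtler argument, for instance combining the strict sub-additivity of Lemma~\ref{prop1}(v) with continuity and with the Lagrange multiplier information (expected to exceed $\beta^2/(4\gamma)$ in light of Theorem~\ref{th1}) in order to contradict the characterization of $c_0$ as the infimum of the strict-inequality set.
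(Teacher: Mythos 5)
Your overall architecture (the equivalence ``$m(c)<-\frac{\beta^2}{8\gamma}c$ $\Leftrightarrow$ $m(c)$ attained'', the large-$c$ test function for $c_0<\infty$, and the propagation of the strict inequality to all $c>c_0$ via Lemma~\ref{prop1}) coincides with the paper's, and the direction ``strict inequality $\Rightarrow$ attained'' is proved exactly as in the paper: vanishing plus Lions' lemma and \eqref{G-N-H2-ineq} would force $\liminf E(u_n)=\liminf I(u_n)\geq m_I(c)$, so the strict inequality excludes vanishing and Lemma~\ref{alternative} gives compactness. Your explicit use of the continuity of $c\mapsto m(c)$ to get $m(c_0)=-\frac{\beta^2}{8\gamma}c_0$ is a point the paper leaves implicit, and your treatment of non-attainment for $c\in(0,c_0)$ via Lemma~\ref{prop1}(iii) (a minimizer at $c<c_0$ would put $\tau c<c_0$ into the strict-inequality set, contradicting the definition of $c_0$) is clean and correct.

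The genuine gap is the endpoint $c=c_0$ in item iii)a), and you have identified it yourself without closing it. Everything you propose there --- Lemma~\ref{prop1}(iii), Lemma~\ref{prop1}(v), the Lagrange multiplier bound $\lambda>\frac{\beta^2}{4\gamma}$ --- only produces information at masses \emph{above} the mass at which a minimizer is assumed to exist, so none of it can contradict the definition of $c_0$ as an infimum; your sentence ``for instance combining \dots in order to contradict the characterization of $c_0$'' is a wish, not an argument. The paper closes $(0,c_0]$ by a different mechanism that does not pass through ``attained $\Rightarrow$ strict'' at all: it observes that $m(c)=m_I(c)$ on this whole interval, argues that a minimizing sequence of $m(c)$ is then a minimizing sequence of $m_I(c)$, and invokes Lemma~\ref{Mihai-1}(iii) (\emph{all} minimizing sequences of $m_I(c)$ vanish) to conclude that $m(c)$ cannot be reached --- a vanishing sequence cannot converge, up to translations, to a nonzero limit, and a minimizer would furnish a constant, non-vanishing minimizing sequence. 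You should either adopt this route (which requires justifying why a minimizing sequence for $E$ with $E(u_n)\to m_I(c)$ must satisfy $\|u_n\|_{2\sigma+2}\to 0$, the step linking the two problems) or find a genuinely new argument at $c=c_0$; as written, your proof establishes iii)a) only on the open interval $(0,c_0)$.
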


\begin{proof}
Let us first prove that $c_0 < \infty$. We fix an arbitrary $u \in S(1)$ and we define $u^{\tau} = \tau^{\frac{1}{2}} u(x)$ with $\tau >0$. Then, we have that $u^{\tau} \in S(\tau)$ and
$$ E(u^{\tau}) = \tau \left[ \frac{\gamma}{2}\|\Delta u\|_2^2 - \frac{\beta}{2}\|\nabla u\|_2^2 \right] - \frac{\alpha \tau^{\sigma +1} }{2 \sigma +2} \|u\|_{2 \sigma +2}^{2 \sigma +2}.$$
Since $\sigma +1 >1$, we easily deduce that 
$$ m(\tau) \leq E(u^{\tau}) <  - \frac{\beta^2}{8 \gamma} \tau$$
for $\tau >0$ large enough and so, that $c_0 < \infty$.

\medbreak
Now observe that, if for some $c>0$ the vanishing occurs for a minimizing sequence $\{u_n\} \subset S(c)$, then, by \cite[Lemma I.1]{LiII}, we have that $\|u_n\|_{2\sigma + 2}^{2\sigma+2} \to 0$ as $n \to \infty$ and so, that $m(c) \geq m_{I}(c)$. Hence, the strict inequality $m(c) < m_{I}(c) = -\frac{\beta^2}{8\gamma}c$ guarantees that the vanishing does not happen. Applying then Lemma \ref{alternative}, we deduce that $m(c) < m_{I}(c)$ implies the existence of a global minimizer.

\medbreak
To end the proof just observe that Lemma \ref{prop1}, ii) guarantees that $m(c) < - \frac{\beta^2}{8 \gamma} c$ for any $c >c_0$. On the other hand, if $c \in [0,c_0]$, then $m(c) = m_{I}(c)$ and so, any minimizing sequence of $m(c)$ is a minimizing sequence of $m_{I}(c)$. Thus, by Lemma \ref{Mihai-1}, it must be vanishing and $m(c)$ is not reached.
\end{proof}

\section{Some classes of testing functions} \label{sect-test-functions}

This section is devoted to find sufficient conditions on $\sigma > 0$ ensuring that $c_0 = 0$ in Lemma~\ref{lem:existence}, i.e. guaranteeing that a minimizer exists for any $c > 0$.

\medbreak
We start with an observation which, although not essential in our proofs, simplifies some computations. To that end, let us introduce the constrained minimization problem 
\[ m_{\Phi}(c):= \inf_{u \in S(c)} \Phi(u),\]
where
\[ \Phi(u) = \|(\Delta +1)u \|_2^2 - \frac{\alpha}{2\sigma +2} \|u\|_{2\sigma+2}^{2\sigma+2},\]
and $S(c)$ is given in~\eqref{de:Mmu}.

Let us introduce
\begin{equation} \label{value-tilde-c}
\widetilde{E}(u):= \|\Delta u \|_2^2 - 2 \|\nabla u\|_2^2 - \frac{\alpha}{2\sigma+2} \|u\|_{2\sigma+2}^{2\sigma+2} 
\end{equation} 
and 
\[ 
\tau = \Big( \frac{8 \gamma}{\beta^2}\Big)^{\frac{1}{\sigma}} \Big( \frac{\beta}{2 \gamma}\Big)^{\frac{N}{2}}. 
\]
% As a first step we prove that, 
For a given $c > 0$, the problem of minimizing $E$ on $S(c)$ is equivalent to minimize $\widetilde{E}$ on $S(\tau c)$. Indeed, let $v(x) := bu(ax)$ with 
\[  a = \left( \frac{2\gamma}{\beta} \right)^{\frac{1}{2}} \quad \textup{ and } \quad b = \left( \frac{8\gamma}{\beta^2} \right)^{\frac{1}{2\sigma}},
\]
we obtain that
\[ E(u) = b^{-2-2\sigma} a^N \left[ \|\Delta v\|_2^2 - 2 \|\nabla v\|_2^2 - \frac{\alpha}{2\sigma + 2} \|v\|_{2\sigma+2}^{2\sigma+2} \right] = b^{-2-2\sigma} a^N \widetilde{E}(v),\]
and 
\[ \tau \|u\|_2^2 = b^2 a^{-N} \|u\|_2^2 = \|v\|_2^2.\]
% Hence, the mentioned equivalence follows. This being proved, 
we can assume without loss of generality that $\gamma = 2$ and $\beta = 4$, namely that
\[E(u) = \|\Delta u \|_2^2-2\|\nabla u\|_2^2 - \frac{\alpha}{2\sigma+2} \|u\|_{2\sigma+2}^{2\sigma+2}.\]
The we recognize that $\Phi(u) = E(u) + \|u\|_2^2$. 
\begin{prop} \label{problem-equivalent}
Let $c_0 \in [0, +\infty)$ given in~\eqref{c0}. We have that $c_0 = 0$ if and only if $m_{\Phi}(c) < 0$ for all $c > 0$.
%$E$ has a minimizer on $S(c)$ for any $c > 0$
%it follows that $m(c) < -\frac{\beta^2}{8\gamma}c$ if and only if $m_J(\tau c) < 0$.
\end{prop}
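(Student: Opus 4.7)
The plan is to unpack the functional $\Phi$ on $S(c)$ and observe that it differs from $E$ by exactly a constant, so that $m_{\Phi}(c)$ is just a shift of $m(c)$.

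First, I would carry out the elementary expansion of $\|(\Delta + 1)u\|_2^2$. Writing it as $\int_{\R^N} ((\Delta u)^2 + 2(\Delta u) u + u^2)\, dx$ and integrating by parts in the cross term gives
\[
\|(\Delta + 1)u\|_2^2 = \|\Delta u\|_2^2 - 2\|\nabla u\|_2^2 + \|u\|_2^2, \qquad \forall\, u \in H^2(\R^N).
\]
Since the normalization $\gamma = 2$, $\beta = 4$ has reduced $E$ to $E(u) = \|\Delta u\|_2^2 - 2\|\nabla u\|_2^2 - \tfrac{\alpha}{2\sigma+2}\|u\|_{2\sigma+2}^{2\sigma+2}$, the identity above immediately yields
\[
\Phi(u) = E(u) + \|u\|_2^2, \qquad \forall\, u \in H^2(\R^N).
\]

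Next, I would restrict to the constraint $u \in S(c)$, where $\|u\|_2^2 = c$ is constant, so $\Phi(u) = E(u) + c$. Taking the infimum over $S(c)$ produces
\[
m_{\Phi}(c) = m(c) + c, \qquad \forall\, c > 0.
\]
With the normalization $\gamma = 2$, $\beta = 4$ we have $\beta^2/(8\gamma) = 1$, so the threshold inequality $m(c) < -\beta^2 c/(8\gamma)$ defining $c_0$ in~\eqref{c0} reads exactly $m(c) + c < 0$, that is, $m_{\Phi}(c) < 0$.

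Finally, I would invoke Lemma~\ref{lem:existence} to translate the condition $c_0 = 0$ into a statement valid for every $c > 0$: according to that lemma, $c_0 = 0$ is equivalent to $m(c) < -\beta^2 c/(8\gamma)$ holding for all $c > 0$, which by the identity just established is equivalent to $m_{\Phi}(c) < 0$ for all $c > 0$. This yields both implications of the proposition. No step is a serious obstacle: the entire argument is the observation that, after the normalization $\gamma = 2$, $\beta = 4$, the functional $\Phi$ is simply $E$ plus the $L^2$ mass, so $m_{\Phi}$ and $m$ are affine translates and the natural threshold for one becomes the zero threshold for the other.
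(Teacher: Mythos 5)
Your proposal is correct and follows essentially the same route as the paper: after the normalization $\gamma=2$, $\beta=4$ justified by the rescaling $v(x)=bu(ax)$ discussed just before the proposition, one has $\Phi(u)=E(u)+\|u\|_2^2$, hence $m_\Phi(c)=m(c)+c$ on $S(c)$, and the threshold $m(c)<-\tfrac{\beta^2}{8\gamma}c$ from Lemma~\ref{lem:existence} becomes exactly $m_\Phi(c)<0$. Your version merely makes explicit the integration by parts in $\|(\Delta+1)u\|_2^2$ and the appeal to Lemma~\ref{lem:existence} that the paper leaves implicit.
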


\begin{proof}
The condition $m(c) < - \frac{\beta^2}{8 \gamma}c$ derived in Lemma~\ref{lem:existence} corresponds to $m_\Phi(c) < 0$. 
\end{proof}

Now, we give sufficient conditions on $\sigma > 0$ ensuring that $m_{\Phi}(c) < 0$ for all $c > 0$. Then, as a consequence of the previous proposition, these conditions guarantee that $c_0 = 0$ in Lemma~\ref{lem:existence}.

\begin{prop} \label{prop mphi}
Assume that $ 0 < \sigma < \max \left\{\frac{4}{N+1},1 \right\}$. Then $m_{\Phi}(c) < 0$ for all $c > 0$.
\end{prop}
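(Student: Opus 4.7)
By Proposition~\ref{problem-equivalent}, we must exhibit, for every $c>0$, a $u\in S(c)$ with $\Phi(u)<0$. Via Plancherel,
\[
\Phi(u) \;=\; (2\pi)^{-N}\!\int_{\R^N}(1-|\xi|^2)^2\,|\widehat u(\xi)|^2\,d\xi \;-\; \frac{\alpha}{2\sigma+2}\,\|u\|_{2\sigma+2}^{2\sigma+2},
\]
so the strategy suggested by Remark~\ref{remark test functions} is to pick $\widehat u$ supported in a thin neighborhood of the unit sphere $S^{N-1}\subset\R^N$: this makes the quadratic term of order $(\text{thickness})^2$, and one then has to arrange a lower bound on $\|u\|_{2\sigma+2}$ that overwhelms it. The plan is to use two distinct trials --- a Knapp-type one giving $\sigma<4/(N+1)$ and a radial shell one giving $\sigma<1$ --- that together cover $0<\sigma<\max\{4/(N+1),1\}$.

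\textbf{Trial 1 (Knapp-type).} Fix $\psi\in C_c^\infty(\R^N)$ non-negative, not identically zero, supported in the unit cube. For small $\eta>0$ set
\[
\widehat u_\eta(\xi) := A_\eta\,\psi\!\left(\tfrac{\xi_1-1}{\eta^2},\,\tfrac{\xi'}{\eta}\right),\qquad \xi=(\xi_1,\xi')\in\R\times\R^{N-1},
\]
with $A_\eta>0$ chosen so that $\|u_\eta\|_2^2 = c$; Plancherel forces $A_\eta^2\asymp c\,\eta^{-(N+1)}$. The support of $\widehat u_\eta$ is a curved box tangent to $S^{N-1}$ at $e_1$ of dimensions $\eta^2\times\eta\times\cdots\times\eta$; on it $|1-|\xi|^2|\lesssim \eta^2$, hence $\|(\Delta+1)u_\eta\|_2^2\lesssim c\,\eta^4$. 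Inverting the Fourier transform and then rescaling the integral $\int|u_\eta|^{2\sigma+2}\,dx$ yields
\[
\|u_\eta\|_{2\sigma+2}^{2\sigma+2}\;\asymp\; c^{\sigma+1}\,\eta^{(N+1)\sigma}.
\]
Therefore $\Phi(u_\eta)\lesssim c\eta^4 - K c^{\sigma+1}\eta^{(N+1)\sigma}$, which is negative for small $\eta$ whenever $(N+1)\sigma<4$.

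\textbf{Trial 2 (Radial shell).} Fix $\chi\in C_c^\infty(\R)$ non-negative with $\chi\equiv 1$ near $0$, supported in $(-1,1)$, and define the radial function
\[
\widehat u_\delta(\xi) := B_\delta\,\chi\!\left(\tfrac{|\xi|-1}{\delta}\right),
\]
normalized by $\|u_\delta\|_2^2=c$, giving $B_\delta^2\asymp c/\delta$. The shell support yields $|1-|\xi|^2|\le 3\delta$, so $\|(\Delta+1)u_\delta\|_2^2\lesssim c\delta^2$. For the lower bound on $\|u_\delta\|_{2\sigma+2}$, which is the less routine point, I argue pointwise near the origin: a direct integration gives $u_\delta(0)\asymp B_\delta\delta$, and the crude bound $\|\nabla u_\delta\|_\infty\le(2\pi)^{-N}\int|\xi||\widehat u_\delta|\,d\xi\lesssim B_\delta\delta$ shows that $u_\delta$ varies by at most $O(B_\delta\delta)$ on the unit ball, hence $|u_\delta|\gtrsim B_\delta\delta$ on a ball of fixed radius around $0$. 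Consequently
\[
\|u_\delta\|_{2\sigma+2}^{2\sigma+2}\;\gtrsim\; (B_\delta\delta)^{2\sigma+2}\;\asymp\; c^{\sigma+1}\,\delta^{\sigma+1},
\]
so $\Phi(u_\delta)\lesssim c\delta^2 - Kc^{\sigma+1}\delta^{\sigma+1}$, which is negative for small $\delta$ whenever $\sigma<1$.

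Combining the two trials covers the stated range of $\sigma$ and proves the proposition. The delicate step I anticipate is the lower bound $\|u_\delta\|_{2\sigma+2}^{2\sigma+2}\gtrsim c^{\sigma+1}\delta^{\sigma+1}$ in Trial 2, handled above by the pointwise argument near the origin; the Knapp-type computation in Trial 1 is a clean application of anisotropic rescaling. A final bookkeeping check is needed to ensure that all the constants $K$ are independent of $c$ so that $\eta$ (resp.\ $\delta$) may be chosen small enough for each fixed $c>0$; this follows because every estimate is homogeneous in $c$ of the correct degree.
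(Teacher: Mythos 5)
Your proposal is correct, and its overall architecture coincides with the paper's: two families of test functions, an anisotropic one concentrating at a single point of the unit sphere in frequency (yielding $\sigma<4/(N+1)$) and an isotropic one spread over the whole sphere (yielding $\sigma<1$). Your Trial 1 is essentially Lemma~\ref{function-test-1} of the paper: the paper uses an anisotropic Gaussian centred at $e_1$ with radial width $\tau^2$ and tangential width $\tau$, you use a compactly supported bump with the same parabolic scaling; the computations of $\|(\Delta+1)u\|_2^2\lesssim c\,\eta^4$ and $\|u\|_{2\sigma+2}^{2\sigma+2}\asymp c^{\sigma+1}\eta^{(N+1)\sigma}$ are identical in substance. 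Your Trial 2, however, is the Fourier--dual of the paper's Lemma~\ref{function-test-2}. The paper works in physical space: it takes $\psi(x)=|x|^{-(N-2)/2}J_{(N-2)/2}(|x|)$, an exact solution of $(\Delta+1)\psi=0$ (and, up to constants, the inverse Fourier transform of the surface measure on $S^{N-1}$), truncates it at radius $m$, and estimates $\|\psi_m\|_2^2\lesssim m$, $\|(\Delta+1)\psi_m\|_2^2\lesssim m^{-1}$ (via the commutator of $\Delta+1$ with the cutoff and Bessel asymptotics) and $\|\psi_m\|_{2\sigma+2}^{2\sigma+2}\gtrsim 1$. You instead thicken the sphere directly in frequency to a shell of width $\delta\sim 1/m$; the resulting exponents ($c\delta^2$ for the quadratic part versus $c^{\sigma+1}\delta^{\sigma+1}$ for the nonlinear part) match the paper's after the substitution $\delta=1/m$. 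Your version buys a more elementary treatment of the quadratic term (no Bessel asymptotics, no cutoff computation), at the price of having to produce the lower bound on $\|u_\delta\|_{2\sigma+2}$ by hand; your pointwise argument (value $\asymp B_\delta\delta$ at the origin plus the Lipschitz bound $\|\nabla u_\delta\|_\infty\lesssim B_\delta\delta$, giving $|u_\delta|\gtrsim B_\delta\delta$ on a fixed ball) is sound and plays exactly the role of the paper's Step 3, where the lower bound comes from $\int_{B_1}|\psi|^{2\sigma+2}\,dx>0$. One cosmetic remark: your shell construction works for every $N\geq 1$ and needs no case distinction, whereas the paper restricts its second lemma to $N\geq 4$ only because for $N\leq 3$ it adds nothing to the first; the union of your two trials covers $0<\sigma<\max\{4/(N+1),1\}$ exactly as claimed.
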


Let us split the proof into two lemmas. The proof of both lemmas consists in finding a suitable test function $u \in S(c)$ such that $\Phi(u) < 0$. As mentioned in Remark~\ref{remark test functions}, we will look for functions such that the $L^2-$norm of their Fourier transform concentrates around the unit sphere. In our first construction, we consider a function whose Fourier transform is a perturbed Gaussian centered at $e_1 = (1,0,\ldots,0)$.

\begin{lem}\label{function-test-1}
Assume that $0 < \sigma < \frac{4}{N+1}$. Then $ m_{\Phi}(c) < 0 $ for all $ c > 0 $. 
\end{lem}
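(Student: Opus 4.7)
The plan is to exhibit, for every $c > 0$, a test function $v \in S(c)$ with $\Phi(v) < 0$; by Proposition~\ref{problem-equivalent} this is precisely what is required. Following Remark~\ref{remark test functions}\,b), I would concentrate $\wh v$ near a point of the unit sphere of $\R^N$, say $e_1 = (1,0,\dots,0)$. Writing $\xi = (\xi_1,\xi')\in \R\times\R^{N-1}$, one has
\[ |\xi|^2 - 1 = 2(\xi_1-1) + (\xi_1-1)^2 + |\xi'|^2, \]
so the defect $(|\xi|^2-1)^2$ controlling $\|(\Delta+1)v\|_2^2$ through Plancherel vanishes \emph{quadratically} in the radial direction and \emph{quartically} in the tangential ones. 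The main obstacle is to exploit this asymmetry: a naive isotropic concentration would only yield the weaker range $\sigma N<2$.

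Accordingly, I would use an anisotropic test function with two scales. Fix non-trivial $\phi_1 \in C_c^\infty(\R)$ and, if $N\geq 2$, $\phi_2\in C_c^\infty(\R^{N-1})$; for small parameters $\epsilon, \delta > 0$ set
\[ \wh{u}_{\epsilon,\delta}(\xi) := \phi_1\!\Big(\tfrac{\xi_1-1}{\epsilon}\Big)\,\phi_2\!\Big(\tfrac{\xi'}{\delta}\Big) \]
(with no $\phi_2$ factor when $N=1$), let $u_{\epsilon,\delta}$ be its inverse Fourier transform, and put $v_{\epsilon,\delta} := \mu\, u_{\epsilon,\delta}$ with $\mu>0$ enforcing $v_{\epsilon,\delta}\in S(c)$.

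Routine changes of variable then give $\|u_{\epsilon,\delta}\|_2^2 \sim \epsilon\delta^{N-1}$ (so $\mu^2\sim(\epsilon\delta^{N-1})^{-1}$), and plugging the expansion above into Plancherel, dominating the cross term $2(\xi_1-1)|\xi'|^2$ by $\epsilon^2+\delta^4$ via Young's inequality, produces
\[ \|(\Delta+1)v_{\epsilon,\delta}\|_2^2 \leq C(\epsilon^2+\delta^4). \]
On the physical side, $|u_{\epsilon,\delta}(x)| = c_N\,\epsilon\delta^{N-1}\bigl|\check\phi_1(\epsilon x_1)\check\phi_2(\delta x')\bigr|$, so after normalization $\|v_{\epsilon,\delta}\|_{2\sigma+2}^{2\sigma+2} = C'\,\epsilon^\sigma\delta^{(N-1)\sigma}$, hence
\[ \Phi(v_{\epsilon,\delta}) \leq C(\epsilon^2+\delta^4) - C'\,\epsilon^\sigma\delta^{(N-1)\sigma}. \]

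The remaining step is a one-parameter optimization. Setting $\delta=\epsilon^\alpha$ with $\alpha>0$ (and simply $\delta=1$ in the case $N=1$, where the tangential factor is absent), negativity of the right-hand side as $\epsilon \to 0^+$ reduces to the two linear constraints $\sigma(1+(N-1)\alpha)<2$ and $\sigma(1+(N-1)\alpha)<4\alpha$. An elementary computation shows that the resulting admissible window for $\alpha$ is non-empty iff $\sigma<4/(N+1)$, which is our hypothesis (for $N=1$ this collapses to $\sigma<2$). Picking any such $\alpha$ and letting $\epsilon\to 0^+$ then delivers $v_{\epsilon,\delta}\in S(c)$ with $\Phi(v_{\epsilon,\delta})<0$, completing the proof.
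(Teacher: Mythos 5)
Your construction is correct and is essentially the paper's own: the paper also concentrates the Fourier transform anisotropically at $e_1$, using a Gaussian with width $\tau^2$ in the radial direction and $\tau$ in the tangential ones, which is exactly the choice $\alpha=\tfrac12$ at which your two constraints $\sigma(1+(N-1)\alpha)<2$ and $\sigma(1+(N-1)\alpha)<4\alpha$ coincide and yield the full range $\sigma<4/(N+1)$. The only differences are cosmetic (compactly supported bumps and a free exponent $\alpha$ versus explicit Gaussians with the optimal scaling hard-wired), so the argument matches the paper's proof.
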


\begin{proof} We fix an arbitrary $ c > 0 $ and, for any $ \tau > 0$, we define
\[  u_{\tau}(x) = \pi^{-\frac{N}{4}} c^{\frac 12} \tau^{\frac{N+1}{2}}  e^{ix_1} e^{- \tau^4 x_1^2 + \tau^2 x_2^2 + \cdots \tau^2 x_N^2}.\]
It is clear that $u_{\tau} \in \mathcal{S} ( \R^N )$ and an easy computation gives
$$
\| u_{\tau} \|_{2}^2 =  \pi^{ - \frac N2} c \tau^{N+1} \int_{\R^N} e^{- \tau^4 x_1^2 + \tau^2 x_2^2 + \cdots \tau^2 x_N^2} \, dx 
=  \pi^{ - \frac N2} c \int_{\R^N} e ^{ - |y |^2 } \, dy = c.
$$
Also, we have that
\begin{equation} \label{tf11}
\begin{aligned}
\displaystyle 
\|u_{\tau}\|_{2 \sigma +2}^{2 \sigma +2}
& =  \pi^{ - \frac{N(\sigma + 1)}{2}} c ^{ \sigma + 1} \tau^{(N+1)( \sigma + 1)} \int_{\R^N} e^{- (\sigma+1)(\tau^4 x_1^2 + \tau^2 x_2^2 + \cdots \tau^2 x_N^2)} \, dx 
\\
\displaystyle
& =  \pi^{ - \frac{N(\sigma + 1)}{2}} (\sigma + 1)^{- \frac N2}  c ^{ \sigma + 1} \tau^{(N +1) \sigma} \int_{\R^N} e^{-  | y|^2 }\, dy
=  \pi^{ - \frac{\sigma N }{2}} (\sigma + 1)^{- \frac N2}  c ^{ \sigma + 1} \tau^{(N+1) \sigma}.
\end{aligned}
\end{equation}
It is well-known that the Fourier transform of $f(x) = e^{-\alpha^2|x|^2}$ is given by
$$\wh{f}(\xi) = \Big(\frac{\pi}{\alpha^2}\Big)^{\frac{N}{2}} e^{- \frac{|\xi|^2}{4 \alpha^2}}.$$
Using this fact and the basic properties of the Fourier's transform we get
\[ \wh{u_{\tau}}(\xi) = 2^{\frac{N}{2}}\pi^{\frac{N}{4}} c^{\frac{1}{2}} \tau^{-\frac{N+1}{2}} e^{ - \frac{ \left(\frac{\xi_1-1}{\tau} \right)^2 + \xi_2^2 + \cdots + \xi_N^2   }{2\tau^2}}. \]
Hence, by Plancherel's formula, it follows that 
$$
\begin{aligned}
\displaystyle 
\|\Delta u_{\tau}\|_2^2-2\|\nabla u_{\tau}\|_2^2 + \|u_{\tau}\|_2^2 
%\int_{\R^N} |\Delta u_{\tau} |^2 -2  |\nabla u_{\tau} |^2 + |  u_{\tau} |^2 \, dx
& = \frac{1}{(2 \pi)^N} \int _{\R^N} \big( |\xi |^4 - 2 |\xi|^2  +1 \big) | \wh{u_{\tau} }( \xi )|^2 \, d \xi \\
& = \pi^{ - \frac N2} c \tau^{-(N+1)}  \int _{\R^N} ( |\xi |^2 - 1) ^2  e^{ - \frac{ \left(\frac{\xi_1-1}{\tau} \right)^2 + \xi_2^2 + \cdots + \xi_N^2   }{\tau^2}} d \xi.
\end{aligned}
$$
Now, using the changes of variables
\[ \xi_1 = 1 + \tau^2 \eta_1, \qquad \xi_j = \tau \eta_j,\ j = 2, \ldots, N,\]
we obtain that
\begin{equation} \label{tf12}
\begin{aligned}
\|\Delta u_{\tau}\|_2^2-2\|\nabla u_{\tau}\|_2^2 + \|u_{\tau}\|_2^2 
% \int_{\R^N} |\Delta u_{\tau} |^2 -2  |\nabla u_{\tau} |^2 + | u_{\tau} |^2 \, dx  
& = \pi^{-\frac{N}{2}} c \int_{\R^N} ( \tau^4 \eta_1^2 + 2\tau^2 \eta_1 + \tau^2 \eta_2^2 + \cdots + \tau^2 \eta_N^2)^2 e^{-|\eta|^2} d\eta \\
& = \pi^{-\frac{N}{2}} c \tau^4 \int_{\R^N} ( \tau^2 \eta_1^2 + 2 \eta_1 +   \eta_2^2 + \cdots +  \eta_N^2)^2 e^{-|\eta|^2} d\eta \\
& \leq A \tau^4, \qquad \forall\ \tau \in (0,1],
\end{aligned}
\end{equation}
for some constant $A > 0$ (independent of $\tau$). Then, by~\eqref{tf11} and~\eqref{tf12}, it follows that
\[
\begin{aligned}
 \Phi(u_{\tau})  & \leq A \tau^4 - \frac{\alpha}{2\sigma + 2} \pi^{-\frac{\sigma N}{2}} (\sigma + 1)^{-\frac{N}{2}} c^{\sigma + 1 }  \tau^{\sigma(N+1)} =: A \tau^4 - B \tau^{\sigma(N+1)},
\end{aligned}
\]
with $B$ positive constant (also independent of $\tau$).
Since $ 0 < \sigma(N+1)  < 4$, we may choose $\tau \in (0, 1]$ sufficiently small so that  $  A \tau^4 - B \tau ^{(N+1) \sigma} < 0 $
and then 
$ m_{\Phi}(c) \leq \Phi( u_{\tau} ) \leq   A \tau^4 - B \tau ^{(N+1) \sigma} < 0$. 
\end{proof}

%\begin{remark}
%If one does not want to assume that $\gamma = 2$ and $\beta = 4$, it just have to modify the test functions accordingly. More precisely, if we consider 
%\begin{equation}\label{def:E}
%E (u):=\frac{\gamma}{2}\int_{\R^N}|\Delta u|^2\, dx -\frac{\beta}{2}\int_{\R^N}|\nabla u|^2\, dx-\frac{\alpha}{2\sigma+2}\int_{\R^N}|u|^{2\sigma+2}\, dx
%\end{equation}
%the appropriate choice of test functions is given by
%\[ u_{\tau}(x) = \pi^{-\frac{N}{4}} c^{\frac{1}{2}} \tau^{\frac{N+1}{2}} e^{i \sqrt{\frac{\beta}{2\gamma}} x_1} e^{- \frac{\tau^4 x_1^2 + \tau^2 x_2^2 + \cdots \tau^2 %x_N^2}{2}},\]
%Then the result that $m(c) < - \frac{\beta^2}{8 \gamma}c$, for all $c>0$ if $\sigma \in (0,\frac{4}{N+1})$  follows arguing exactly in the same way.
%\end{remark}

Now, using a different construction based on the fact that
\begin{equation} \label{psi}
\psi(x) = |x|^{-\frac{N-2}{2}} J_{\frac{N-2}{2}} (|x|),
\end{equation} 
satisfies 
\begin{equation} \label{eq-psi}
(\Delta + 1)\psi = 0, \quad \textup{ in } \RN,
\end{equation}
we enlarge the range on $\sigma > 0$ obtained in Lemma~\ref{function-test-1}. Here $J_{\nu}$ denotes the Bessel function of the first kind and order $\nu$ and we refer for instance to~\cite[Appendix B.4]{Gr} for a proof of~\eqref{eq-psi}. 

\medbreak
Since the asymptotic behavior of $\psi$ will play an important role in our construction, we describe it in the following lemma.
% some immediate consequences of the asymptotic behaviour of $J_{\nu}$.

\begin{lem} \label{asymptotic behavior psi}
Assume that $N \geq 2$ and let $\psi$ as defined in~\eqref{psi}. Then:
\begin{itemize}
\item[i)] $\displaystyle \psi(x) \sim \left( \frac{1}{2} \right)^{\frac{N-2}{2}} \frac{1}{\Gamma(\frac{N}{2})}$ as $|x| \to 0$.
\medbreak
\item[ii)] $\displaystyle \psi(x) \sim |x|^{-\frac{N-1}{2}} \cos \left( |x|-\frac{(N-1)\pi}{4} \right)$ as $|x| \to +\infty$.
\end{itemize}
\end{lem}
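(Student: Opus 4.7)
The plan is to reduce both asymptotics to the classical behavior of the Bessel function $J_{\nu}$ of the first kind at $0$ and at $+\infty$, applied with the specific index $\nu = (N-2)/2$. These asymptotics are standard and can be found in the same reference already cited for~\eqref{eq-psi}, namely~\cite[Appendix B.4]{Gr}.

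For part i), I would start from the power series representation
\[
J_{\nu}(r) = \Big(\frac{r}{2}\Big)^{\nu}\sum_{k=0}^{\infty}\frac{(-1)^{k}}{k!\,\Gamma(\nu+k+1)}\Big(\frac{r}{2}\Big)^{2k},
\]
which gives $J_{\nu}(r) \sim (r/2)^{\nu}/\Gamma(\nu+1)$ as $r \to 0^{+}$. Taking $r=|x|$ and $\nu = (N-2)/2$ and inserting into the definition~\eqref{psi} of $\psi$, the factor $|x|^{-(N-2)/2}$ exactly cancels the $|x|^{(N-2)/2}$ coming from $(r/2)^{\nu}$, leaving the constant $(1/2)^{(N-2)/2}/\Gamma(N/2)$. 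This is essentially a one-line computation.

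For part ii), I would invoke the large-argument asymptotic
\[
J_{\nu}(r) = \sqrt{\frac{2}{\pi r}}\,\cos\!\Big(r - \frac{\nu\pi}{2} - \frac{\pi}{4}\Big) + O(r^{-3/2}), \qquad r \to +\infty,
\]
again specializing to $\nu=(N-2)/2$. The phase shift then becomes
\[
\frac{\nu\pi}{2} + \frac{\pi}{4} = \frac{(N-2)\pi}{4} + \frac{\pi}{4} = \frac{(N-1)\pi}{4}.
\]
Multiplying by the prefactor $|x|^{-(N-2)/2}$ from~\eqref{psi} collapses the powers of $|x|$ into $|x|^{-(N-1)/2}$, yielding the claimed oscillatory decay (up to the explicit constant $\sqrt{2/\pi}$, which is absorbed in the ``$\sim$'').

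There is essentially no obstacle here: both statements are direct consequences of well-known Bessel function asymptotics, and the only bookkeeping is to track how the factor $|x|^{-(N-2)/2}$ combines with the leading-order behaviour of $J_{(N-2)/2}(|x|)$ at the two ends of the real line. I would therefore keep the proof to a few lines, quoting~\cite{Gr} for the two Bessel asymptotics and performing the two elementary substitutions.
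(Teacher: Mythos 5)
Your proposal is correct and follows essentially the same route as the paper, which simply quotes the standard small- and large-argument asymptotics of $J_{\nu}$ (citing Lebedev's formula (5.16.1)) and specializes to $\nu=(N-2)/2$; your explicit bookkeeping of the power $|x|^{-(N-2)/2}$ and of the phase $(N-1)\pi/4$ matches what is implicit there, including the convention that the constant $\sqrt{2/\pi}$ is absorbed into the ``$\sim$''.
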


\begin{proof}
The result immediately follows from the asymptotic behavior of $J_{\nu}(t)$ for $\nu \geq 0$ and $t \geq 0$. We refer for instance to~\cite[Formula $(5.16.1)$ page 134]{Le}.
\end{proof}

\begin{lem}\label{function-test-2}
Assume that $N \geq 4$ and $0 < \sigma < 1$. Then $m_{\Phi}(c) < 0$ for all $c > 0$. %We have that $ m_J(c ) < 0 $, for all $ c > 0 $, if one of the conditions below hold.
%\begin{itemize}
%\item[a)] $N=1$ and $\sigma <2$;
%\item[b)] $N=2$ and $\sigma \leq 1$;
%\item[c)] $N \geq 3$ and $\sigma <1$.
%\end{itemize}
\end{lem}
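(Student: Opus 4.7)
The plan is to build test functions by truncating $\psi$, exploiting the fact that $(\Delta+1)\psi \equiv 0$ while $\psi$ decays only like $|x|^{-(N-1)/2}$ at infinity. Concretely, I would fix a radial $\chi \in C_c^{\infty}(\R^N)$ with $\chi \equiv 1$ on $B_1$ and $\chi \equiv 0$ outside $B_2$, and, writing $\chi_R(x) := \chi(x/R)$, set
\[ u_R(x) := \chi_R(x)\,\psi(x), \qquad v_R := \sqrt{c}\, u_R/\|u_R\|_2 \in S(c), \]
for $R > 1$. The goal is to show that $\Phi(v_R) < 0$ once $R$ is sufficiently large, which, by Proposition~\ref{problem-equivalent}, is precisely what we need.

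The heart of the argument consists of three asymptotic estimates as $R \to \infty$, based on Lemma~\ref{asymptotic behavior psi}(ii) together with the companion expansion $\psi'(r) \sim -r^{-(N-1)/2} \sin(r - (N-1)\pi/4)$ for the radial derivative (from the standard WKB asymptotic of Bessel functions). First, from $|\psi|^2 \sim r^{-(N-1)}\cos^2(r - (N-1)\pi/4)$ and the fact that the mean of $\cos^2$ over a period is $1/2$, integration in spherical coordinates gives $\|u_R\|_2^2 \sim R$. Second, since $(\Delta+1)\psi = 0$, the pointwise identity $(\Delta+1)u_R = 2 \nabla \chi_R \cdot \nabla \psi + (\Delta \chi_R)\,\psi$ shows that $(\Delta+1)u_R$ is supported in the annulus $R \le |x| \le 2R$; using $|\nabla \chi_R| \le C/R$, $|\Delta \chi_R| \le C/R^2$ and $|\psi| + |\nabla \psi| \le C r^{-(N-1)/2}$ there yields $\|(\Delta+1)u_R\|_2^2 \le C R^{-1}$. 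Third, since the integrand $|\psi|^{2\sigma+2}$ behaves like $r^{-(N-1)(\sigma+1)}|\cos|^{2\sigma+2}$ at infinity, $\|u_R\|_{2\sigma+2}^{2\sigma+2}$ is bounded below by a positive constant times $R^{1-(N-1)\sigma}$ when $(N-1)\sigma < 1$, by $\log R$ when $(N-1)\sigma = 1$, and by a positive constant when $(N-1)\sigma > 1$.

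After normalization, $\lambda_R^2 := c/\|u_R\|_2^2 \sim R^{-1}$, hence $\|(\Delta+1) v_R\|_2^2 \le C R^{-2}$ and $\|v_R\|_{2\sigma+2}^{2\sigma+2} \ge c_0 R^{-\gamma}$, with $\gamma = \sigma+1$ if $(N-1)\sigma \ge 1$ and $\gamma = \sigma N$ if $(N-1)\sigma < 1$. The conclusion will follow from $\gamma < 2$, which is verified under the hypotheses $0 < \sigma < 1$ and $N \ge 4$: in the first regime $\gamma = \sigma + 1 < 2$, while in the second $\sigma < 1/(N-1) \le 1/3$, so $\sigma N < N/(N-1) \le 4/3 < 2$. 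Thus $\Phi(v_R) \le A R^{-2} - B R^{-\gamma} < 0$ for $R$ large enough.

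The main technical obstacle I anticipate is the lower bound on $\|u_R\|_2^2$ and on $\|u_R\|_{2\sigma+2}^{2\sigma+2}$: extracting a uniformly positive average from $\cos^{2k}(r - (N-1)\pi/4)$ requires integrating over enough periods and keeping honest control of the $O(r^{-(N+1)/2})$ remainder in the Bessel asymptotic. Everything else—the upper bounds and the localized estimate of $(\Delta+1)u_R$ in the annulus $R \le |x| \le 2R$—reduces to routine integrations.
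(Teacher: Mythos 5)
Your construction is the same as the paper's: the test function is $\psi$ cut off at scale $R$, the key point is that $(\Delta+1)\psi=0$ forces $(\Delta+1)u_R$ to live in the annulus $R\le|x|\le2R$ with $\|(\Delta+1)u_R\|_2^2\le CR^{-1}$, and your exponent bookkeeping at the end is correct. The only real divergence is in what you demand of the norms, and it is exactly where you flag the ``main technical obstacle''. You ask for two-sided asymptotics, $\|u_R\|_2^2\sim R$ together with a case-dependent lower bound on $\|u_R\|_{2\sigma+2}^{2\sigma+2}$, both of which require averaging the Bessel oscillation with control of the $O(r^{-(N+1)/2})$ remainder. The paper arranges the computation so that none of this is needed: writing
$\Phi(\widetilde\psi_R)=\|\psi_R\|_2^{-2}\bigl[c\,\|(\Delta+1)\psi_R\|_2^2-\tfrac{\alpha c^{\sigma+1}}{2\sigma+2}\,\|\psi_R\|_2^{-2\sigma}\,\|\psi_R\|_{2\sigma+2}^{2\sigma+2}\bigr]$,
it uses only the \emph{upper} bound $\|\psi_R\|_2^2\le D_1R$ (so that $\|\psi_R\|_2^{-2\sigma}\ge D_1^{-\sigma}R^{-\sigma}$), the upper bound $\|(\Delta+1)\psi_R\|_2^2\le D_2R^{-1}$, and the crude lower bound $\|\psi_R\|_{2\sigma+2}^{2\sigma+2}\ge\int_{B_1}|\psi|^{2\sigma+2}\,dx>0$ coming from Lemma~\ref{asymptotic behavior psi}, i), near the origin; the bracket is then at most $cD_2R^{-1}-BR^{-\sigma}<0$ for $R$ large, precisely because $\sigma<1$, and only a trivial positive lower bound on $\|\psi_R\|_2^2$ is needed for the prefactor. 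So the oscillatory lower bounds can be skipped entirely. Your version does go through if you carry those estimates out, but note that even within your own scheme the case distinction on $(N-1)\sigma$ is superfluous: the constant lower bound on $\|u_R\|_{2\sigma+2}^{2\sigma+2}$ already yields $\|v_R\|_{2\sigma+2}^{2\sigma+2}\gtrsim R^{-(\sigma+1)}$ with $\sigma+1<2$ in all cases.
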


\begin{remark}$ $
\begin{itemize}
\item[a)] The construction we do here may also be used for $N \leq 3$. %and $0 < \sigma \leq \frac{1}{N-1}$. 
Nevertheless, since, for $N \leq 3$, the results we are able to obtain do not improve the ones contained in Lemma~\ref{function-test-1}, we focus on $N \geq 4$. %and $\frac{1}{N-1} < \sigma < 1$.
\item[b)] Note that, for $N \geq 4$, we cover the full mass subcritical range $0 < \sigma N < 4$. 
\end{itemize}

\end{remark}

\begin{proof}
First of all observe that, for $N \geq 4$, $\frac{1}{N-1} < \frac{4}{N+1}$. Hence, having at hand Lemma~\ref{function-test-1}, we can assume without loss of generality that $\sigma > \frac{1}{N-1}$. Then, for all $m \in \N$, we define 
\begin{equation}
\psi_m(x) = \psi(x) \phi \left( \frac{x}{m} \right)
\end{equation}
where $\psi$ is given in~\eqref{psi} and $\phi \in C^{\infty}(\RN)$ is such that $\phi(x) = 1$ if $|x| \leq 1$, $\phi(x) = 0$ if $|x| \geq 2$ and $0 \leq \phi(x)\leq 1$, for all $x \in \RN$. Note that, in this proof, for any $\delta > 0$, $B_{\delta}$ denotes the ball $B_{\delta}(0)$. We now  split the rest of the proof into several steps. 

\medbreak
\noindent \textbf{Step 1: } \textit{There exist $m_1 \in \N$ and $D_1 > 0$ such that, for all $m \geq m_1$, it follows that $\|\psi_m\|_2^2 \leq D_1 m.$} 
\smallbreak
First observe that, by Lemma~\ref{asymptotic behavior psi}, ii), there exists $R \in \N$ such that, for all $x \in \RN$ with $|x| \geq R$,
\[|\psi(x)| \leq C \, |x|^{-\frac{N-1}{2}},\]% \quad \textup{ if }\quad |x|\geq R,\]
and so, that, for all $m \geq R$,
\[ \|\psi_m\|_2^2 = \int_{\RN} \big| \psi(x) \phi \big( \frac{x}{m} \big)\, \big|^2 dx \leq \int_{B_{2m}} |\psi(x)|^2 dx = \int_{B_R} |\psi(x)|^2 dx + \int_{ B_{2m} \setminus B_R} |\psi(x)|^2 dx  \leq C_1 + C_2 (2m -R).\]
Hence, there exist $m_1 \geq R$ and $D_1 > 0$ such that, for all $m \geq m_1$, 
\begin{equation*} 
%\label{d1m}
 \|\psi_m\|_2^2 \leq D_1 m.
\end{equation*}
\medbreak
\noindent \textbf{Step 2: } \textit{There exists $m_2 \in \N$ and $D_2 > 0$ such that, for all $m \geq m_2$, it follows that $\| (\Delta + 1) \psi_m \|_2^2 \leq D_2 m^{-1}$.}
\smallbreak
First of all, using~\eqref{eq-psi}, one can easily check that 
\[ (\Delta +1) \psi_m(x) = \frac{1}{m^2} \Delta \phi \left( \frac{x}{m} \right) \psi(x) + \frac{2}{m} \nabla \phi\left(\frac{x}{m} \right) \cdot \nabla \psi(x),\]
and so, that
\begin{equation} \label{I0}
\begin{aligned}
\|(\Delta+1)\psi_m\|_2^2 & = \frac{1}{m^4} \int_{\RN} \left| \Delta \phi \left( \frac{x}{m} \right) \psi(x) \right|^2 dx + \frac{4}{m^2} \int_{\RN} \left| \nabla \phi\left(\frac{x}{m} \right) \cdot \nabla \psi(x) \right|^2 dx \\  
& \hspace{4.35cm}+ \frac{4}{m^3} \int_{\RN} \left| \Delta \phi \left( \frac{x}{m} \right) \psi(x) \right| \left| \nabla \phi\left(\frac{x}{m} \right) \cdot \nabla \psi(x) \right| dx \\
& \leq  \frac{1}{m^4} \int_{\RN} \left| \Delta \phi \left( \frac{x}{m} \right) \psi(x) \right|^2 dx + \frac{4}{m^2} \int_{\RN} \left| \nabla \phi\left(\frac{x}{m} \right) \cdot \nabla \psi(x) \right|^2 dx \\
& \hspace{4.35cm} + \frac{4}{m^3} \left( \int_{\RN} \left| \Delta \phi \left( \frac{x}{m} \right) \psi(x) \right|^2 dx \right)^{\frac{1}{2}} \left( \int_{\RN} \left| \nabla \phi\left(\frac{x}{m} \right) \cdot \nabla \psi(x) \right|^2 dx \right)^{\frac{1}{2}} \\
& =: \frac{1}{m^4} I_1 + \frac{4}{m^2} I_2 + \frac{4}{m^3} (I_1)^{\frac{1}{2}} (I_2)^{\frac{1}{2}}.
\end{aligned}
\end{equation}
Then, arguing as in Step 1, we obtain that, for all $m \geq R$,
\begin{equation} \label{I1}
I_1 \leq \|\Delta \phi \|_{\infty}^2 \int_{B_{2m}\setminus B_m} |\psi(x)|^2 dx \leq C_1 m.
\end{equation}
Now, since
\begin{equation*} 
J'_{\nu}(t) = - J_{\nu+1}(t) + \frac{\nu}{t} J_{\nu}(t),
\end{equation*}
for all $\nu \geq 0$ and all $t \geq 0$, see for instance~\cite[$(4)$ page 45]{Wa}, we obtain that
\[ \nabla \psi(x) = 
% - \frac{N-2}{2} |x|^{-\frac{N+2}{2}} \,x\, J_{\frac{N-2}{2}} (|x|) + |x|^{-\frac{N-2}{2}} \frac{x}{|x|} \left( -J_{\frac{N}{2}}(|x|) + \frac{N-2}{2} |x|^{-1} J_{\frac{N-2}{2}}(|x|) \right), \quad \forall\ x \in \RN \setminus \{0\},
-\frac{x}{|x|}|x|^{-\frac{N-2}{2}}J_{\frac{N}{2}}(|x|) , \quad \forall\ x \in \R^N \setminus \{0\},
\]
and so, similarly as in Lemma~\ref{asymptotic behavior psi}, there exists $R_2 \in \N$ such that, for all $x \in \RN$ with $|x| \geq R_2,$
\[ |\nabla \psi(x)| \leq C |x|^{-\frac{N-1}{2}}.\]
Hence, we deduce that, for all $m \geq R_2$,
\begin{equation} \label{I2}
I_2 \leq \|\nabla \psi\|_{\infty}^2 \int_{B_{2m} \setminus B_m} |\nabla \psi(x)|^2 dx \leq C  \|\nabla \psi\|_{\infty}^2 \int_{B_{2m} \setminus B_m} |x|^{-(N-1)} dx \leq C_2 m.
\end{equation}
Gathering~\eqref{I0}-\eqref{I2}, we obtain that
\[ \|(\Delta+1)\psi_m\|_2^2 \leq C_1 m^{-3} + 4 C_2 m^{-1} + 4 C_1^{1/2} C_2^{1/2} m^{-2},\]
and so, that there exist $m_2 \geq \max\{R,R_2\}$ and $D_2 > 0$ such that, for all $m \geq m_2$,
\[ \|(\Delta + 1)\psi_m\|_2^2 \leq D_2 m^{-1}.\]
\medbreak
\noindent \textbf{Step 3: } \textit{There exists $D_3 > 0$ such that, for all $m \in \N$, it follows that $\|\psi_m\|_{2\sigma+2}^{2\sigma+2} \geq D_3$.}
\medbreak
Directly observe that, for all $m \in \N$, 
\[ \|\psi_m\|_{2\sigma+2}^{2\sigma+2} = \int_{\RN} \left| \psi(x) \phi\left( \frac{x}{m} \right) \right|^{2\sigma+2} dx \geq \int_{B_1} |\psi(x)|^{2\sigma+2} dx.\]
The claim then follows directly from Lemma~\ref{asymptotic behavior psi}, i).
\medbreak
\noindent \textbf{Step 4: }\textit{Conclusion.}
\medbreak
We fix an arbitrary $c > 0$ and, for any $m \in \N$, we define $\widetilde{\psi}_m = c^{\frac{1}{2}} \frac{\psi_m}{\|\psi_m\|_2}$. It is clear that $\widetilde{\psi}_m \in S(c)$ for all $m \in \N$. Then, by Steps 1, 2 and 3, we deduce that there exists $m_3 \geq \max\{m_1,m_2\}$ such that, for all $m \geq m_3$, 
\begin{equation*}
\begin{aligned}
\Phi(\widetilde{\psi}_m) & = \frac{c}{\|\psi_m\|_2^2} \|(\Delta +1) \psi_m\|_2^2 - \frac{\alpha}{2\sigma+2} \frac{c^{\sigma+1}}{\|\psi_m\|_2^{2\sigma+2}} \|\psi_m\|_{2\sigma+2}^{2\sigma+2}  \\
& \leq \frac{c}{\|\psi_m\|_2^2} D_2 m^{-1} - \frac{\alpha}{2\sigma+2} \frac{c^{\sigma+1}}{\|\psi_m\|_2^{2\sigma+2}} D_3 \\
& = \frac{1}{\|\psi_m\|_2^2} \left[ c D_2 m^{-1} - \frac{\alpha D_3 c^{\sigma+1}}{2\sigma+2} \frac{1}{\|\psi_m\|_2^{2\sigma}} \right] \\
& \leq \frac{1}{\|\psi_m\|_2^2} \left[ c D_2 m^{-1} - \frac{\alpha D_3 c^{\sigma+1}}{(2\sigma+2)D_1^{\sigma}} m^{-\sigma} \right] =: \frac{1}{\|\psi_m\|_2^2} \left[ Am^{-1} - B m^{-\sigma}\right],
\end{aligned}
\end{equation*}
with $A$ and $B$ positive constants independent of $m$. Arguing as in Step 3, one can easily see that $\|\psi_m\|_2^2 \geq D_4 > 0$ for all $m \in \N$. Thus, since $0 < \sigma < 1$, we may choose $m \geq m_3$ sufficiently large so that $A m^{-1} - B m^{-\sigma} < 0$ and then $m_{\Phi}(c) \leq \Phi( \widetilde{\psi}_m) \leq A m^{-1} - B m^{-\sigma } < 0$. 
\end{proof}

\begin{proof}[\textbf{Proof of Proposition~\ref{prop mphi}}]
It follows directly from Lemmas~\ref{function-test-1} and~\ref{function-test-2}.
\end{proof}

At this point we can give the proof of our first main result.

\begin{proof}[\textbf{Proof of Theorem~\ref{th1}}]
The existence part of Theorem~\ref{th1} is a direct consequence of Lemma~\ref{lem:existence} and Propositions~\ref{problem-equivalent} and~\ref{prop mphi}.
%\ref{function-test-1} and~\ref{function-test-2} and of the Proposition~\ref{problem-equivalent}. 
Hence, to conclude we just need to obtain the lower bound on the Lagrange multiplier. We recall that if
$u \in S(c)$ is a global minimizer of $m(c)$ (or more generally a constrained critical point), there exists a $\lambda \in \R$ such that $E'(u) = - \lambda u$, namely a $\lambda \in \R$ such that
\begin{equation}\label{E6}
\gamma \Delta^2 u + \beta \Delta u - \alpha |u|^{2 \sigma}u = - \lambda u.
\end{equation}
Multiplying~\eqref{E6} by $u$ and integrating it follows that
\begin{equation}\label{E7}
- \lambda c =   \gamma  \|\Delta u\|_2^2 - \beta \|\nabla u\|_2^2 - \alpha \|u\|_{2 \sigma +2}^{2 \sigma +2}.
\end{equation}
Then, from~\eqref{E7} and Lemma~\ref{Mihai-1}, we deduce that
\begin{align*}
- \lambda c & 
 =    2 E(u) - \frac{2 \alpha \sigma}{2 \sigma +2}\|u\|_{2 \sigma +2}^{2 \sigma +2}  \\
& =  2 m(c) - \frac{2 \alpha \sigma}{2 \sigma +2}\|u\|_{2 \sigma +2}^{2 \sigma +2}  
 < 2 m(c) \leq 2 m_I(c) = - \frac{\beta^2}{4 \gamma}c,
\end{align*}
and thus $\lambda > \displaystyle \frac{\beta^2}{4 \gamma}$.
\end{proof}
\begin{remark}
 At this point of the analysis, we can emphasize that in dimension $N=1$ both sets of test functions provides $\sigma<2$. Indeed, in \textbf{Step 3} of the proof of Lemma~\ref{function-test-2} the lower bound improves to $\|\psi_m\|_{2\sigma+2}^{2\sigma+2} \geq D_3 m$. In the case $N=2$, it improves only to $\|\psi_m\|_{2\sigma+2}^{2\sigma+2} \geq D_3 \log(m+1)$. 

In dimension $3$, both test sets provide the same bound. 
 
Notice that the usual unitary scaling argument provides $\sigma<\frac{2}{N}$ which also coincides with the previous bounds when $N=1$.
\end{remark}

\section{Behavior of the minimizers as $c \to 0$, proof of Theorem~\ref{th2}} \label{sect-limit}

In the section we give the proof of Theorem~\ref{th2}. Recall that $\{(u_n, c_n)\} \subset S(c_n) \times \R$ is such that $c_n \to 0$ and, for each $n \in \N$, $u_n \in 
S(c_n)$ is a minimizer of $m(c_n)$ and $\lambda_n \in \R$ is the associated  Lagrange multiplier. Hence, without loss of generality we may assume that $c_n \leq 1$.

\begin{proof}[\textbf{Proof of Theorem~\ref{th2}}]
Let us start with some preliminary observations. 
\medbreak
\noindent \textbf{Claim 1:} \textit{ $\displaystyle \left\{ \frac{\|\Delta u_n\|_2}{\|u_n\|_2} \right\}$ remains bounded. }
\medbreak
Indeed, using~\eqref{G-N-H2-ineq},~\eqref{interpolation} and Lemma~\ref{prop1}, i), we have that
\begin{equation}\label{E2}
0 \geq 2 E(u_n) \geq  \gamma \|\Delta u_n\|_2^2 - \beta \|\Delta u_n\|_2 \|u_n\|_2 - \frac{\alpha}{\sigma +1}B_N(\sigma)\|\Delta u_n\|^{\frac{\sigma N}{2}} \|u_n\|_2^{2 + 2 \sigma - \frac{\sigma N}{2}},
\end{equation}
and so, 
\begin{equation}\label{E3}
\|\Delta u_n\|_2^2  \leq \frac{\beta}{\gamma} \|\Delta u_n\|_2 \|u_n\|_2 + C \|\Delta u_n\|^{\frac{\sigma N}{2}} \|u_n\|_2^{2 + 2 \sigma - \frac{\sigma N}{2}}.
\end{equation}
Dividing~\eqref{E3} by $\|u_n\|_2^2$, it follows that
\begin{equation}\label{E4}
\left( \frac{\|\Delta u_n\|_2}{\|u_n\|_2}\right)^2 \leq \frac{\beta}{\gamma} \frac{\|\Delta u_n\|_2}{\|u_n\|_2} + C \left( \frac{\|\Delta u_n\|_2}{\|u_n\|_2}\right)^{\frac{\sigma N}{2}} \|u_n\|_2^{2 \sigma} \leq \frac{\beta}{\gamma} \frac{\|\Delta u_n\|_2}{\|u_n\|_2} + C \left( \frac{\|\Delta u_n\|_2}{\|u_n\|_2}\right)^{\frac{\sigma N}{2}}
\end{equation}
Since $0< \sigma N <4$, from~\eqref{E4}, the boundedness of the left hand side follows and thus the claim is proved.
\medbreak
\noindent \textbf{Claim 2:}\textit{ $ \displaystyle \frac{\|u_n\|_{2 \sigma +2}^{2 \sigma +2}}{\|u_n\|_2^2} \to 0$ as $n \to \infty$.
}
\medbreak
By~\eqref{G-N-H2-ineq} we know that
$$ \|u_n\|_{2 \sigma +2}^{2 \sigma +2} \leq B_N(\sigma) \|\Delta u_n\|_2^{\frac{\sigma N}{2}} \|u_n\|_2^{2 + 2 \sigma - \frac{\sigma N}{2}}.$$
Then, dividing by $\|u_n\|_2^2$ one gets
$$ 
\frac{\|u_n\|_{2 \sigma +2}^{2 \sigma +2}}{\|u_n\|_2^2} \leq B_N(\sigma) \left( \frac{\|\Delta u_n\|_2}{\|u_n\|_2}\right)^{\frac{\sigma N}{2}} \|u_n\|_2^{ 2 \sigma} = B_N(\sigma) \left( \frac{\|\Delta u_n\|_2}{\|u_n\|_2}\right)^{\frac{\sigma N}{2}} c_n^{\sigma}
$$
From Claim 1 and the fact that $c_n \to 0$ as $n \to \infty$, the claim follows. Now, we split the rest of the proof into several steps:
\medbreak
\noindent \textbf{Step 1: } \textit{Proof of} i).
\medbreak
It just remains to prove the first inequality. The other statements have already been established, see Section~\ref{sect-existence}. Directly observe that
\[ m(c_n) = E(u_n) = I(u_n) - \frac{\alpha}{2\sigma+2} \|u_n\|_{2\sigma+2}^{2\sigma+2} \geq m_I(c_n) - \frac{\alpha}{2\sigma+2} \|u_n\|_{2\sigma+2}^{2\sigma+2} = m_I(c_n) - \frac{\alpha}{2\sigma+2} \frac{\|u_n\|_{2\sigma+2}^{2\sigma+2}}{\|u_n\|_2^2} c_n.\]
The desired inequality then follows from Claim 2.
\medbreak
\noindent \textbf{Step 2: } \textit{Proof of } ii).
\medbreak
By Theorem~\ref{th1} we know that $\lambda_n > \frac{\beta^2}{4\gamma}$. Now, recalling that
\begin{equation} \label{eqbuena}
- \lambda_n c_n =   \gamma  \|\Delta u_n\|_2^2 - \beta \|\nabla u_n\|_2^2 - \alpha \|u_n\|_{2 \sigma +2}^{2 \sigma +2},
\end{equation}
and using Lemma~\ref{Mihai-1} and Claim 2, one can write
$$ - \lambda_n c_n \geq 2 m_I(c_n) - \alpha \|u_n\|_{2 \sigma +2}^{2 \sigma +2} = - \frac{\beta^2}{4 \gamma} c_n - \frac{\|u_n\|_{2\sigma+2}^{2\sigma+2}}{\|u_n\|_2^2} c_n =  - \frac{\beta^2}{4 \gamma} c_n- \mu_n c_n$$
for a sequence $\mu_n \to 0$ as $n \to \infty.$ Thus, we have that 
$$\frac{\beta^2}{4\gamma} < \lambda_n \leq \frac{\beta^2}{4 \gamma} + \mu_n$$
for a sequence $\mu_n \to 0$ and ii) follows.
\medbreak
\noindent \textbf{Step 3: } \textit{Proof of }iii) 
\medbreak
By~\cite[Lemma 2.1]{BoCaGoJe-2}, we have that $u_n \in S(c_n)$ satisfies the Pohozaev type identity
\begin{equation}\label{E8}
\gamma \|\Delta u_n\|_2^2 - \frac{\beta}{2}\|\nabla u_n\|_2^2 - \frac{\sigma N}{2 (2 \sigma +2)}\|u_n\|_{2 \sigma +2}^{2 \sigma +2}=0.
\end{equation}
Dividing~\eqref{E8} by $\|u_n\|_2^2$ and using Claim 2 we get that
\begin{equation}\label{E9}
\gamma \frac{\|\Delta u_n\|_2^2 }{\|u_n\|_2^2 } - \frac{\beta}{2} \frac{\|\nabla u_n\|_2^2 }{\|u_n\|_2^2 } \to 0.
\end{equation}
Also, by i), we know there exists $\mu_n \to 0$ as $n \to \infty$ such that
\begin{equation*}
E(u_n) = \frac{\gamma}{2} \|\Delta u_n\|_2^2 - \frac{\beta}{2} \|\nabla u_n\|_2^2 - \frac{\alpha}{2 \sigma +2} \|u_n\|_{2 \sigma +2}^{2 \sigma +2} =  - \frac{\beta^2}{8 \gamma}\|u_n\|_2^2 + \mu_n \|u_n\|_2^2 - \frac{\alpha}{2 \sigma +2} \|u_n\|_{2 \sigma +2}^{2 \sigma +2}.
\end{equation*}
Then, using again Claim 2, we deduce that
\begin{equation}\label{E11}
\frac{\gamma}{2} \frac{\|\Delta u_n\|_2^2 }{\|u_n\|_2^2 }  - \frac{\beta}{2} \frac{\|\nabla u_n\|_2^2 }{\|u_n\|_2^2 } \to - \frac{\beta^2}{8 \gamma}.
\end{equation}
Having at hand~\eqref{E9} and~\eqref{E11} one easily deduce iii).
\medbreak
\noindent \textbf{Step 4: } \textit{Proof of }iv).
\medbreak
Let us define $v_n = \frac{u_n}{\|u_n\|_2}$ for all $n \in \N$. Since $u_n \in S(c_n)$ satisfies~\eqref{eqbuena} and by ii) we know that $\lambda_n \to \frac{\beta^2}{4 \gamma}$, as $n \to \infty$, we get
\begin{equation}\label{E12}
\gamma \|\Delta v_n\|_2^2 - \beta \|\nabla v_n\|_2^2 + \frac{\beta^2}{4 \gamma} \|v_n\|_2^2 = \alpha \frac{\|u_n\|_{2 \sigma +2}^{2 \sigma +2}}{\|u_n\|_2^2} + \mu_n \|v_n\|_2^2
\end{equation}
for some $\mu_n \to 0$. On the other hand, by Plancherel's formula, it follows that
\begin{equation}\label{E13}
\gamma \|\Delta v_n\|_2^2 - \beta \|\nabla v_n\|_2^2 + \frac{\beta^2}{4 \gamma} \|v_n\|_2^2 = \frac{1}{(2 \pi)^N}
\int _{\R^N} \Big( \gamma |\xi |^4 - \beta |\xi|^2 +  \frac{\beta^2}{4 \gamma} \Big)\, | \wh{v_{n} } ( \xi )|^2 \, d \xi .
\end{equation}
Gathering~\eqref{E12} and~\eqref{E13} we have that,
\begin{equation}\label{E14}
\frac{1}{(2 \pi)^N} \int _{\R^N} \Big( \sqrt{\gamma} |\xi |^2 - \frac{\beta}{2 \sqrt{\gamma}} \Big)^2 | \wh{v_{n} } ( \xi )|^2 \, d \xi = \alpha \frac{\|u_n\|_{2 \sigma +2}^{2 \sigma +2}}{\|u_n\|_2^2} + \mu_n  
\end{equation}
where $\mu_n \to 0$ as $n \to \infty$. Using Claim 2 we see that the right hand side goes to $0$ as $n \to \infty$. This proves that~\eqref{4} holds. Now, from~\eqref{4} and~\eqref{E13}, we deduce that
$$ I(v_n) = \frac{\gamma}{2} \|\Delta v_n\|_2^2 - \frac{\beta}{2} \|\nabla v_n\|_2^2 \to - \frac{\beta^2}{8 \gamma}.$$
In view of Lemma~\ref{Mihai-1}, 1), we have that $\{v_n\} \subset S(1)$ is a minimizing sequence for $m_I(1)$. Then, by Lemma~\ref{Mihai-1}, iii), it follows that $\{v_n\}$ is a vanishing sequence. Applying then~\cite[Lemma I.1]{LiII}, we deduce that $v_n \to 0$ in $L^p(\RN)$ for all $p \in (2,4^{\ast})$. This completes the proof of the theorem.
\end{proof}

\section{Some extensions and related problems.} \label{sect-extensions}

In this last section we make some additional remarks and discuss possible extensions of our results.

\subsection{Symmetry of the minimizers in Theorem~\ref{th1}} $ $
\medbreak
As a consequence of the arguments developed in the preprint~\cite{BuLeSo}, when $\sigma  \in \N$, we can obtain the following description of the minima obtained in Theorem~\ref{th1}.  

\begin{prop}\label{Lenzmann-input}
%Assume  that $\gamma >0,$ $\beta >0$, $\alpha >0$, $0 <\sigma N < 4$ and l
Let $\sigma \in \N$ and, for any arbitrary $c>0$ such that $m(c)$ is reached, let $Q$ be a  minimizer of $m(c)$. Then, it follows that
$$Q(x) = e^{i \tau} Q^{\bullet}(x+x_0),$$
with some constants $\tau \in \R$ and $x_0 \in \R^N$. Here, $Q^{\bullet} : \R^N \to \Cc$ is a smooth bounded and positive definitive function in the sense of Bochner. As a consequence, it holds that
$$ Q^{\bullet}(-x) = \overline{Q^{\bullet}(x)} \quad \mbox{and} \quad Q^{\bullet}(0) \geq |Q^{\bullet}(x)| \quad \mbox{for all} \quad x \in \R^N.$$
\end{prop}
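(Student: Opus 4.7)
The plan is to apply a Fourier rearrangement (or Fourier symmetrization) argument in the spirit of the recent work of Bugiera, Lenzmann and Sok~\cite{BuLeSo}. Given a minimizer $Q \in S(c)$ of $m(c)$, the first step is to define its Fourier symmetrization $Q^\bullet := \mathcal{F}^{-1}\bigl[|\widehat{Q}|\bigr]$, so that by construction $\widehat{Q^\bullet} = |\widehat{Q}| \geq 0$. By Plancherel's identity the quadratic parts of the energy are then automatically preserved,
\[
\|Q^\bullet\|_2 = \|Q\|_2, \qquad \|\nabla Q^\bullet\|_2 = \|\nabla Q\|_2, \qquad \|\Delta Q^\bullet\|_2 = \|\Delta Q\|_2,
\]
so in particular $Q^\bullet \in S(c)$.

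The key step is to establish the Fourier rearrangement inequality
\[
\|Q^\bullet\|_{2\sigma+2}^{2\sigma+2} \geq \|Q\|_{2\sigma+2}^{2\sigma+2},
\]
which is available precisely because $k := \sigma+1$ is an integer. The argument is to write $\|Q\|_{2k}^{2k} = \|Q^k\|_2^2$ and use that the Fourier transform of $Q^k$ is (up to a normalization constant) the $k$-fold convolution of $\widehat{Q}$. Plancherel then recasts the $L^{2k}$-norm as a quantity of the form $\|\widehat{Q}^{\,*k}\|_2^2$, and the pointwise bound $|\widehat{Q}^{\,*k}(\xi)| \leq (|\widehat{Q}|)^{*k}(\xi) = \widehat{Q^\bullet}^{\,*k}(\xi)$, obtained by moving the modulus inside the convolution via the triangle inequality, gives the claim. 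Combining with the fact that $E$ carries a minus sign in front of the $L^{2\sigma+2}$-term, I get $E(Q^\bullet) \leq E(Q) = m(c)$, and since $Q^\bullet \in S(c)$ this forces $E(Q^\bullet) = m(c)$. Hence $Q^\bullet$ is itself a minimizer, and equality must hold throughout the above rearrangement inequality.

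The most delicate part, which is where I would invoke the substantive content of~\cite{BuLeSo}, is the resulting rigidity: equality in $|\widehat{Q}^{\,*k}| \leq (|\widehat{Q}|)^{*k}$ forces the argument of $\widehat{Q}(\xi)$ to be an affine function of $\xi$ almost everywhere, i.e. of the form $\tau + x_0 \cdot \xi$ for some $\tau \in \R$ and $x_0 \in \R^N$. Translating this back via the modulation/translation duality of the Fourier transform yields the announced representation $Q(x) = e^{i\tau} Q^\bullet(x + x_0)$. This rigidity step is the main obstacle, since the non-strict triangle inequality is delicate to analyze and has to be promoted from a pointwise statement to a global structural one.

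Finally, the stated properties of $Q^\bullet$ follow from $\widehat{Q^\bullet} \geq 0$. Smoothness and boundedness are inherited from $Q$, which solves the Euler--Lagrange equation in~\eqref{Pc} and is therefore classical. The identity $Q^\bullet(-x) = \overline{Q^\bullet(x)}$ is immediate since $\widehat{Q^\bullet}$ is real-valued. The bound $Q^\bullet(0) \geq |Q^\bullet(x)|$ follows from the elementary estimate $|Q^\bullet(x)| = (2\pi)^{-N}\bigl|\int e^{i\xi\cdot x}\widehat{Q^\bullet}(\xi)\, d\xi\bigr| \leq (2\pi)^{-N}\int \widehat{Q^\bullet}(\xi)\, d\xi = Q^\bullet(0)$. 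The positive definiteness in the sense of Bochner is exactly the characterization of functions whose Fourier transform is a non-negative finite measure, which is the case here.
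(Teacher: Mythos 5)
Your proposal follows essentially the same route as the paper: Fourier rearrangement $Q^{\bullet}=\cF^{-1}(|\cF Q|)$, preservation of the quadratic terms by Plancherel, the convolution argument for $\|Q^{\bullet}\|_{2\sigma+2}\geq\|Q\|_{2\sigma+2}$ when $\sigma+1\in\N$ (this is exactly \cite[Lemma 2.1]{BuLeSo}), the conclusion that $Q^{\bullet}$ is again a minimizer, and then the rigidity analysis of \cite{BuLeSo} for the representation $Q(x)=e^{i\tau}Q^{\bullet}(x+x_0)$.

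There is, however, one genuine omission. The rigidity step you defer to \cite{BuLeSo} --- promoting equality in $|\widehat{Q}^{\,*k}|\leq(|\widehat{Q}|)^{*k}$ to the statement that $\arg\widehat{Q}(\xi)$ is affine in $\xi$ --- is carried out in \cite[Lemmata 2.2 \& 4.1]{BuLeSo} under the hypothesis that $\widehat{Q}$ is real-analytic, which is what rules out $\widehat{Q}$ vanishing on a set of positive measure and allows the phase to be propagated globally; without this, the case of equality in the pointwise triangle inequality gives no control wherever $\widehat{Q}=0$. Analyticity of $\widehat{Q}$ is obtained from exponential decay of $Q$, i.e. $e^{a|\cdot|}Q\in L^2(\R^N)$ for some $a>0$, and in the present setting this decay is not free: the paper derives it from \cite[Theorem 3.10]{BoCaMoNa} precisely because Theorem~\ref{th1} guarantees the strict lower bound $\lambda>\beta^2/4\gamma$ on the Lagrange multiplier of any minimizer. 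Your write-up never verifies (or even mentions) this prerequisite, so as it stands the invocation of the rigidity lemma is not justified; inserting the decay step via the Lagrange multiplier bound closes the gap. (The same issue touches your final paragraph: the bound $|Q^{\bullet}(x)|\leq(2\pi)^{-N}\int\widehat{Q^{\bullet}}\,d\xi$ and the Bochner characterization need $\widehat{Q^{\bullet}}\in L^1$, which again follows from the decay/regularity of $Q$ rather than merely from $Q$ solving the Euler--Lagrange equation.)
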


A function $f:\R^N \to\Cc$ is positive definite in the sense of Bochner if for any
$(x_1,\ldots,x_n)$ in $(\R^N)^n$ the matrix 
$(f(u_i-u_j))_{1\leq i,j\leq n}$ 
is definite positive.

\begin{remark}
Our proof is a essentially a consequence of~\cite[Theorem 2]{BuLeSo}. We provide some details for the reader's sake in trying to keep the notation introduced by the authors in~\cite{BuLeSo}.
\end{remark}

\begin{proof}
% First note that our operator $u \mapsto \gamma \Delta^2 u + \beta \Delta u$ falls within the class of pseudo-differential operators considered in~\cite{BuLeSo}. Indeed, our symbol, which is given by $p(\xi) = \gamma  |\xi|^4-\beta |\xi|^2$, satisfies the \textbf{Assumption 2} of~\cite{BuLeSo} with $s = 2$.
% \medbreak
% Now, l
Let $Q$ be a minima for $m(c)$ and let $\lambda \in \R$ be the associated Lagrange multiplier. By Theorem~\ref{th1}, we know that $\lambda > \beta^2/4\gamma$. Hence, by~\cite[Theorem 3.10]{BoCaMoNa}, we have that $e^{a \, |\, \cdot\,|} Q \in L^2(\RN)$ for some $a > 0$. Note that such decay may be also obtained as in~\cite[Theorem 3]{BuLeSo}. Now, defining
\[ Q^{\bullet}:= \cF^{-1} (|\cF Q|),\] 
we observe, from~\cite[Lemma 2.1]{BuLeSo}, that
\[ \|\Delta Q^{\bullet}\|_2 = \|\Delta Q\|_2, \quad \|\nabla Q^{\bullet}\|_2 = \|\nabla Q\|_2, \quad \|Q^{\bullet}\|_2 = \|Q\|_2 \quad \textup{ and } \quad \|Q^{\bullet}\|_{2\sigma+2} \geq \|Q\|_{2\sigma+2}.\]
Thus, we easily deduce that $Q^{\bullet}$ is also a minima for $m(c)$. Having at hand the suitable exponential decay of $Q$ and the fact that $Q^{\bullet}$ is also a minima for $m(c)$, the rest of the proof follows repeating almost verbatim the proofs of~\cite[Lemmata 2.2 \& 4.1]{BuLeSo}.
% and the first part of~\cite[Theorem 2]{BuLeSo}.
\end{proof}

\begin{remark}
The conclusions of Proposition~\ref{Lenzmann-input} hold for any $c>0$,  if $N=1$ or if $N=2$ and $\sigma =1$. In particular, we cover the physical relevant case $N=2$ for the Kerr nonlinearity. 

As mentionned in~\cite[Remark after Lemma 2.1]{BuLeSo}, the method does not extend immediately to non integer~$\sigma$.
\end{remark}

\subsection{Optimal range of $\pmb{\sigma >0}$}$ $ 
\medbreak
The condition $0 < \sigma < \max\left\{4/(N+1),1 \right\}$ is the consequence of two particular trials of test functions. It would be nice to put in light an optimal upper bound on $\sigma >0$,  which permits a minimizer to exists for every $c > 0$. See Figure~\ref{figure}.

{\centering
\begin{figure}[h!] 
\begin{tikzpicture}[line width = 0.45mm, scale = 1.25, >=stealth]
      \draw[->] (1,0) -- (6.55,0) node[right] {$N$};
      \draw[->] (1,0) -- (1,4.4) node[above] {$\sigma$};
  %    \draw[scale=1,domain=1:5,smooth,variable=\x,blue!80!black] plot ({\x},{2/\x});
     % \draw [blue!80!black](5, 2/5) node[right]{2/N};
      \draw[scale=1,domain=1:6.5,smooth,variable=\x,black,line width = 0.8mm] plot ({\x},{4/\x});
      \draw [black](6.5, 4/6.5) node[right]{4/N};
      \draw[scale=1,domain=1:3,smooth,variable=\x,green!45!black,line width = 0.8mm] plot ({\x},{4/(\x+1)});
       \draw[-, green!45!black,line width = 0.8mm] (3,1) -- (4,1); 
    %  \draw [orange!75!black](5, 4/6) node[right]{4/(N+1)};     
    %  \draw[-, pink!75!black] (1,2) -- (5,2);
   %   \draw (5,2)[pink!75!black] node[right]{2};
      \draw (4.12,1.12)[green!45!black] node[right]{$\max\left\{\frac{4}{N+1},1\right\}$};      
    %  \draw[dashed, purple, line width = 0.35mm] (1,0)--(1,4);
      \draw (1,-0.1) node[below] {$N=1$};
      \draw[dashed, purple, line width = 0.35mm] (2,0)--(2,2);
      \draw (2,-0.1) node[below] {$N=2$};
      \draw[dashed, purple, line width = 0.35mm] (3,0)--(3,1);
      \draw (3,-0.1) node[below] {$N=3$};
      \draw[dashed, purple, line width = 0.35mm] (4,0)--(4,1);
      \draw (4,-0.1) node[below] {$N=4$};
      \draw[dashed, purple, line width = 0.35mm] (5,0)--(5,4/5);
      \draw (5,-0.1) node[below] {$N=5$};
      \draw[dashed, purple, line width = 0.35mm] (6,0)--(6,2/3);
      \draw (6,-0.1) node[below] {$N=6$};
      
     % \draw[blue!50!black, dashed, line width = 0.35mm] (1,2)--(1.5,4/1.5);

\fill[green, opacity = 0.2] (1,0) -- (6.5,0) -- (6.5,4/6.5) -- (5,4/5) -- (4,1) -- (3,1) -- (2.5, 4/3.5)-- (2,4/3) -- (1.5, 4/2.5) -- (1,2) -- cycle;
\fill[cyan, dashed, opacity = 0.2] (1,2) -- (1.5,4/2.5) -- (2, 4/3) -- (2.5, 4/3.5) -- (3,1) -- (4,1) -- (3.75, 4/3.75) -- (3.5,4/3.5) -- (3.25, 4/3.25) -- (3,4/3) -- (2.75,4/2.75) -- (2.5, 4/2.5) -- (2.25, 4/2.25) -- (2,2) -- (1.75, 4/1.75) -- (1.5,4/1.5) -- (1.25, 4/1.25) -- (1,4) -- cycle;

%\fill[color=gray!15] (0,0) -- (1,0) -- (0.75,0.18) -- (0.5,0.38) -- (0.25,0.55)  -- (0,0.65) -- cycle   ;      
     
      \draw [-] (0.95,1)--(1.05,1);
      \draw [-] (0.95,2)--(1.05,2);
      \draw [-] (0.95,3)--(1.05,3);
      \draw [-] (0.95,4)--(1.05,4);
      
      \draw (0.94,1) node[left] {1};  
      \draw (0.94,2) node[left] {2};
      \draw (0.94,3) node[left] {3};
      \draw (0.94,4) node[left] {4};
\end{tikzpicture} 
\caption{The existence of a global minimizer on $S(c)$ for $c > 0$ small is open in the smaller region.} 
\label{figure}
\end{figure}
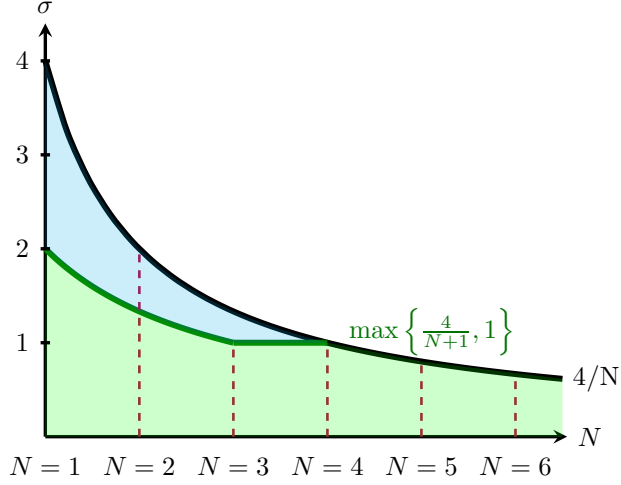
}
%\medbreak
More generally, the question to consider seems to be the following : Let $ 0< \sigma N < 4^*$ and define
$$A(1) := \{u \in S(c) : \|\Delta u\|_2^2 \leq 1\}.$$
Which condition on $\sigma >0$ guarantees that, for any $c>0$,
\begin{equation*}
%\label{strict-local}
\inf_{u \in S(c) \cap A(1) }I(u) <  \inf_{u \in S(c) \cap A(1) }E(u) \, \, ?
\end{equation*}

\subsection{The mass critical case} $ $
\medbreak

As already mentioned, the results we have derived in the mass subcritical case are also useful in the mass critical case. First, we note the following

\begin{lem} \label{thm1}
Assume $\gamma >0,$ $\beta >0,$ $\alpha >0,$ $\sigma N =4$ and $N \geq 1$. There exists $c^{*}_N >0$ such that
$m(c) \in (- \infty, 0)$ if $c \in (0, c^*_N)$ and $m(c) = - \infty$ if $c \geq c^*_N$. Actually  
\begin{align} \label{cn}
c^{*}_N=  \left(\frac{\gamma}{\alpha} C(N) \right)^{\frac N4} \quad \mbox{where} \quad
C(N):=\frac{N+4}{N B_N (\frac 4N) },
\end{align}
and $B_N (\sigma)$ is the smallest constant satisfying ~\eqref{G-N-H2-ineq}. In addition, $E$ is coercive on $S(c)$ if $c \in (0, c^*_N)$. 
\end{lem}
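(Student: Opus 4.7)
The plan is to work from the sharp Gagliardo--Nirenberg inequality \eqref{G-N-H2-ineq} specialized to $\sigma = 4/N$. Applied to $u \in S(c)$ it yields
\[
\tfrac{\alpha}{2\sigma+2}\|u\|_{2\sigma+2}^{2\sigma+2} \le \tfrac{\alpha N}{2(N+4)}B_N(4/N)\,c^{4/N}\,\|\Delta u\|_2^2.
\]
Plugging this into $E$ gives, for every $u \in S(c)$,
\[
E(u)\ \ge\ \tfrac{1}{2}\Bigl[\gamma - \tfrac{\alpha N B_N(4/N)}{N+4}\,c^{4/N}\Bigr]\|\Delta u\|_2^2 \,-\, \tfrac{\beta}{2}\|\nabla u\|_2^2.
\]
The bracket is nonnegative precisely when $c^{4/N}\le \gamma\,C(N)/\alpha$, i.e. $c\le c^*_N$, with equality at $c=c^*_N$; this identifies the critical mass announced in \eqref{cn}.

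For $c \in (0,c^*_N)$ the bracket is strictly positive, so combining the previous bound with the interpolation inequality \eqref{interpolation} (which gives $\|\nabla u\|_2^2 \le c^{1/2}\|\Delta u\|_2$) I obtain a bound of the form $E(u)\ge A_c\|\Delta u\|_2^2 - B_c\|\Delta u\|_2$ with $A_c>0$. This bound is immediately coercive on $S(c)$ and yields $m(c)>-\infty$. To show $m(c)<0$, I pick an arbitrary $u \in S(c)$ and use the mass-preserving scaling $u_s$ from \eqref{dilatation-louis}. In the critical case $\sigma N=4$, formula \eqref{functional-louis} reads
\[
E(u_s)=\tfrac{\gamma s^2}{2}\|\Delta u\|_2^2 - \tfrac{\beta s}{2}\|\nabla u\|_2^2 - \tfrac{\alpha s^2}{2\sigma+2}\|u\|_{2\sigma+2}^{2\sigma+2},
\]
which is dominated by $-\tfrac{\beta s}{2}\|\nabla u\|_2^2<0$ as $s\to 0^+$, so $E(u_s)\to 0^-$ and $m(c)<0$.

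For $c\ge c^*_N$ the strategy is to exhibit a family in $S(c)$ along which $E\to -\infty$. Let $Q\in H^2(\R^N)$ be an optimizer (or, failing that, a near-optimizing sequence) for the sharp biharmonic Gagliardo--Nirenberg inequality \eqref{G-N-H2-ineq} with $\sigma=4/N$; after rescaling in amplitude we may assume $\|Q\|_2^2=c$, so that
\[
\tfrac{\gamma}{2}\|\Delta Q\|_2^2-\tfrac{\alpha}{2\sigma+2}\|Q\|_{2\sigma+2}^{2\sigma+2}=\tfrac{\alpha N B_N(4/N)}{2(N+4)}\|\Delta Q\|_2^2\bigl[(c^*_N)^{4/N}-c^{4/N}\bigr]\le 0.
\]
Applying again the scaling $Q_s$ we get
\[
E(Q_s)=s^2\Bigl[\tfrac{\gamma}{2}\|\Delta Q\|_2^2-\tfrac{\alpha}{2\sigma+2}\|Q\|_{2\sigma+2}^{2\sigma+2}\Bigr]-\tfrac{\beta s}{2}\|\nabla Q\|_2^2.
\]
For $c>c^*_N$ the bracket is strictly negative and $E(Q_s)\to -\infty$ as $s\to+\infty$; for $c=c^*_N$ the bracket vanishes and the linear term $-\tfrac{\beta s}{2}\|\nabla Q\|_2^2$ still drives $E(Q_s)\to-\infty$. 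In either case $m(c)=-\infty$, completing the dichotomy.

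The main technical point is invoking a function $Q$ that saturates (or nearly saturates) the biharmonic Gagliardo--Nirenberg inequality; existence of optimizers for \eqref{G-N-H2-ineq} is a classical fact (see e.g.\ the standard biharmonic NLS literature such as Fibich--Ilan--Papanicolaou), but should the argument be desired in a self-contained form, one may use a minimizing sequence $\{Q_n\}$ for the Gagliardo--Nirenberg ratio and perform the scaling argument on each $Q_n$, taking $n$ large enough that the bracket becomes nonpositive (and, for $c>c^*_N$, strictly negative). The algebraic identity $C(N)=(N+4)/(NB_N(4/N))$ is simply the book-keeping of these constants.
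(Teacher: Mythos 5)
Your proof is correct and takes essentially the same route as the paper: the sharp Gagliardo--Nirenberg inequality at $\sigma=4/N$ plus the interpolation inequality \eqref{interpolation} give coercivity and $m(c)>-\infty$ for $c<c^*_N$, the scaling $u_s$ as $s\to 0^+$ gives $m(c)<0$, and scaling an $L^2$-renormalized optimizer of \eqref{G-N-H2-ineq} as $s\to+\infty$ gives $m(c)=-\infty$ for $c\ge c^*_N$. The paper likewise relies on the existence of that optimizer (citing Boulenger--Lenzmann and Bellazzini--Frank--Visciglia), so your near-optimizer fallback is unnecessary; note it would anyway need extra care at the borderline $c=c^*_N$, where a near-optimizer leaves the $s^2$-coefficient slightly positive.
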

\begin{proof}
First note that, when $\sigma N =4$, one has
\begin{equation}\label{ajout2}
\frac{ \alpha N}{N+4}\|u\|_{2 +\frac{8}{N}}^{2 +\frac{8}{N}} \leq  \left(\frac {c}{c^*_N}\right)^{\frac 4N}  \gamma \|\Delta u\|_2^2,  \quad \forall\ u \in S(c).  %\mbox{for every} \quad u \in H^2(\R^N).
\end{equation}
Indeed,~\eqref{ajout2} follows from the Gagliardo-Nirenberg inequality~\eqref{G-N-H2-ineq} using the definition  of $c_N^*$ given in~\eqref{cn}. Now, using~\eqref{interpolation} and~\eqref{ajout2}, we have that
\begin{align*} 
%\label{none}
\begin{split}
E(u)& \geq \frac {\gamma}{2} \|\Delta u\|_2^2 - \frac{\beta}{2} \|\Delta u\|_2 \|u\|_2 
     - \frac{\alpha N}{2N+8} \|u\|_{2 + \frac 8N}^{2 + \frac 8N}  \\
    & \geq \frac {\gamma}{2} \left(1-\left(\frac{c}{c_N^*}\right)^{\frac 4N}\right) \|\Delta u\|_2^2 - \frac{\beta }{2} c^{\frac{1}{2}} \|\Delta u\|_2,  \quad \qquad \forall\ u \in S(c).
\end{split}
\end{align*}
Therefore, we deduce that $E$ is coercive if $c < c^*_N$ and then, in particular, that $m(c) > - \infty$. The fact that $m(c) <0$ when $c \in (0,c_N^*)$ follows directly from 
\eqref{functional-louis} letting, for an arbitrary $u \in S(c)$, $s \to 0$. 
\medbreak
Now, let us prove that $m(c) = - \infty$ for $c \geq c^*_N$. It follows 
from~\cite{BoLe}, see also~\cite{BeFaVi}, that the best constant $B_N(\frac 4N)$ in~\eqref{G-N-H2-ineq} is achieved, i.e. there exists $U \in H^2(\R^N)$ satisfying
\begin{align} \label{u}
\|U\|_{2 + \frac 8N}^{2 + \frac 8N}=B_N\big(\frac 4N\big)\,\|U\|_2^{\frac 8N}\|\Delta U\|_2^{2}.
\end{align}
Choosing
\begin{align*} 
%\label{defw}
w:=c^{\frac 12} \frac{U}{\|U\|_2} \in S(c),
\end{align*}
and taking~\eqref{functional-louis} and~\eqref{u} into account, we get
\begin{equation} \label{key10}
\begin{aligned}
E(w_s) &=\frac {c}{2\|U\|_2^2} s^2 \gamma \|\Delta U\|_2^2
     - \frac {c}{2\|U\|_2^2} s \beta  \|\nabla U\|_2^2 
     -\frac{N}{2N+8} \left(\frac {c^{\frac 12}}{\|U\|_2}\right)^{2 + \frac 8N} s^2 \alpha \|U\|_{ 2 + \frac 8N}^{ 2 + \frac 8N} \\
     &= \frac {c}{2\|U\|_2^2} \gamma \left(1-\left(\frac{c}{c_N^*}\right)^{\frac 4N}\right) s^2
     \|\Delta U\|_{2}^2
     - \frac {c}{2\|U\|_2^2} s \beta \|\nabla U\|_2^2  \\
		 & \leq - \frac {c}{2\|U\|_2^2} s \beta \|\nabla U\|_2^2 
\end{aligned}
\end{equation}
which implies that $ E(w_s) \rightarrow -\infty$ as $s \rightarrow \infty$ for any $c \geq c^*_N$.  
\end{proof}

In view of Lemma~\ref{thm1} we obtain the following result on the line of Theorem~\ref{th1}.
\begin{thm}\label{th11}
Assume $\gamma >0$, $\beta >0$, $\alpha >0$, $\sigma N =4$ and $N \geq 5.$ Then for any $c \in (0, c_N^*)$,  any minimizing sequence of $m(c)$ is precompact in $H^2(\R^N)$ up to translations. In particular, $m(c)$ is achieved. In addition, if $u \in S(c)$ is a minimizer of $m(c)$, the associated Lagrange multiplier $\lambda \in \R$  satisfy $\lambda > \frac{\beta^2}{4 \gamma}.$
\end{thm}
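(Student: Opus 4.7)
The plan is to mirror, step by step, the proof of Theorem~\ref{th1}, observing that each ingredient of the mass subcritical analysis has a direct counterpart in the present mass critical setting provided we restrict to $c \in (0, c_N^*)$, the range where coercivity and $m(c) < 0$ are guaranteed by Lemma~\ref{thm1}.

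First, I would verify that the preliminary lemmas of Section~\ref{preresults}, in particular Lemma~\ref{prop1} (the scaling and subadditivity properties of $m(c)$), Lemma~\ref{lemma5} (the renormalization lemma), and Lemma~\ref{alternative} (vanishing vs.\ compactness dichotomy), carry over with essentially the same proofs provided one keeps the mass parameters inside $(0, c_N^*)$. The key observation is that the proof of Lemma~\ref{prop1} only uses the scaling~\eqref{functional-louis} together with $m(c) < 0$, and in Lemma~\ref{alternative} the boundedness of any minimizing sequence in $H^2(\R^N)$ comes from the coercivity provided by Lemma~\ref{thm1}. When splitting $\|u_n\|_2^2 \to c$ into $c_1 + (c - c_1)$, both $c_1$ and $c - c_1$ remain in $(0, c_N^*)$ since $c < c_N^*$, so the strict subadditivity argument still applies.

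Next, I would observe that Lemma~\ref{lem:existence} has the same statement and proof in this regime: if a minimizing sequence vanishes then, by~\cite[Lemma I.1]{LiII}, the $L^{2\sigma+2}$ norm tends to zero and hence $m(c) \geq m_I(c) = -\tfrac{\beta^2}{8\gamma}c$; conversely, the strict inequality $m(c) < -\tfrac{\beta^2}{8\gamma}c$ rules out vanishing and, combined with the dichotomy, yields a minimizer. It remains to produce test functions verifying this strict inequality for every $c \in (0, c_N^*)$. Since $\sigma N = 4$ and $N \geq 5$ yields $\sigma = 4/N \leq 4/5 < 1$ and $N \geq 4$, the hypotheses of Lemma~\ref{function-test-2} are satisfied, and the very same truncated Bessel function construction produces, for each $c > 0$, an element of $S(c)$ achieving $m_\Phi(c) < 0$, which translates (via Proposition~\ref{problem-equivalent}, whose equivalence is a purely algebraic rescaling independent of the subcritical assumption) into $m(c) < -\tfrac{\beta^2}{8\gamma}c$. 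Hence every minimizing sequence is precompact in $H^2(\R^N)$ up to translations, and $m(c)$ is attained.

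Finally, the bound $\lambda > \beta^2/(4\gamma)$ on the Lagrange multiplier follows verbatim from the end of the proof of Theorem~\ref{th1}: writing the Euler–Lagrange equation, testing with $u$, and combining with Lemma~\ref{Mihai-1} and the strict inequality above gives
\[ -\lambda c = 2 m(c) - \tfrac{2\alpha\sigma}{2\sigma+2}\|u\|_{2\sigma+2}^{2\sigma+2} < 2 m(c) \leq 2 m_I(c) = -\tfrac{\beta^2}{4\gamma}c. \]
The main obstacle I anticipate is to check carefully that the subadditivity and the strict subadditivity of $c \mapsto m(c)$ survive the restriction to $(0, c_N^*)$ and suffice to drive the dichotomy argument of Lemma~\ref{alternative}; everything else is essentially a quotation of the tools developed in Sections~\ref{preresults}--\ref{sect-test-functions}.
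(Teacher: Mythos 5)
Your proposal is correct and follows essentially the same route as the paper: the paper's proof simply invokes the coercivity of $E$ on $S(c)$ for $c \in (0, c_N^*)$ from Lemma~\ref{thm1}, notes that the arguments of the proof of Theorem~\ref{th1} then remain valid, and observes that $N \geq 5$ forces $\sigma = 4/N < 1$ so that Lemma~\ref{function-test-2} rules out vanishing. Your write-up merely makes explicit the checks (subadditivity on $(0,c_N^*)$, the Lagrange multiplier computation) that the paper leaves implicit.
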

\begin{proof}
We know, from Lemma~\ref{thm1}, that $E$ is coercive on $S(c)$ for any  $c \in (0, c_N^*)$. Thus, the arguments developed in the proof of Theorem~\ref{th1} remain valid. Note also that, when $N \geq 5$, we have $\sigma <1$ and it guarantees that the minimizing sequences do not vanish.
\end{proof}

\begin{remark}
It should be possible, when $N \leq 4$, to derive a lower bound $ \tilde{c}_N > 0$ such that, for any $c \in (\tilde{c}_N, c_N^*)$,
 the conclusions of Theorem~\ref{th11} holds. We refer to~\cite{LuZhZh} for elements in that direction.
\end{remark}

\subsection{Bifurcation from the infimum of the essential spectrum} $ $
\medbreak
First observe that, by Theorem~\ref{th2}, iii), we know that, when the minimizers for $m(c)$ exist, they converge to $0$ in the $H^2(\RN)$ norm as $c \to 0$. Also, by Theorem~\ref{th2}, ii), we know that the associated Lagrange multipliers converge to $\beta^2/4\gamma$. Hence, in view of Lemma~\ref{Mihai-1} and  Remark~\ref{bottom-spectre},  we may consider a bifurcation phenomenon from the bottom of the essential spectrum of the operator $u \mapsto \gamma \Delta^2 u + \beta \Delta u$. 
\medbreak
In~\cite[Theorem 1.2]{BoCaMoNa}, see also~\cite{BoNa}, the authors show that when $0 < \sigma N < 4^*$ the equation
\begin{equation}\label{eq:free}
\gamma \Delta^2 u + \beta \Delta u + \lambda u = |u|^{2 \sigma} u, 
\end{equation}
admits a ground state solution  $u_{\lambda} \in H^2(\R^N)$ whenever $\beta < 2 \sqrt{\gamma \lambda}$, namely if $\lambda > \frac{\beta^2}{4 \gamma}$. By a ground state it is intend here a least energy solution for the free functional
$$\E(u)  = \frac{\gamma}{2} \|\Delta u\|_2^2 - \frac{\beta}{2}\|\nabla u\|_2^2 + \frac{\lambda}{2}
\|u\|_{2}^{2}  - \frac{1}{2 \sigma +2} \|u\|_{2 \sigma +2}^{2 \sigma +2}.$$
Note also that, when $\lambda < \frac{\beta^2}{4 \gamma}$, it is expected that~\eqref{eq:free} has no solutions in $H^2(\R^N)$, see~\cite{BoCaMa} in that direction. Worth of interest, in our opinion, would  be to investigate under which conditions on $\sigma \in (0, 4^*/N)$,  the ground states solutions $u_{\lambda}$ to~\eqref{eq:free} satisfy $u_{\lambda} \to 0$ in $H^2(\R^N)$ as $\lambda \to \beta^2/4\gamma$ from the right, thus presenting a bifurcation phenomenon.
\medbreak
%From Theorem~\ref{th2}, iii) we see that, when the minimizers exist, they converge to $0$ in the $H^2(\R^N)$ norm as $c \to 0$.  Also, by Theorem~\ref{th2}, ii) the associated Lagrange multipliers converge to $ \frac{\beta^2}{4 \gamma}$ from the right.  In view of Lemma~\ref{bottom-spectre}   can then speak of a bifurcation phenomena from the bottom of the essential spectrum of the operator $u \to  \gamma \Delta^2 u  + \beta \Delta u$.
This kind of questions were first addressed in the 80's by C. A. Stuart~\cite{St-88} for equations whose model is given by
\begin{equation}\label{stuart}
 - \Delta u - \lambda u = |u|^{2\sigma} u, \quad u \in H^1(\R^N)\,.
\end{equation}
Here the bottom of the essential spectrum is $\lambda =0$. 
For~\eqref{stuart}, the existence of a sequence of solutions $(u_n, \lambda_n) \subset H^1(\R^N) \times (0, + \infty)$ with $u_n \to 0$ in $H^1(\R^N)$ and $\lambda_n \to 0$ was established under the condition that $0 < \sigma N <2$.
 Note that this condition is somehow optimal since, for~\eqref{stuart}, it corresponds to the range for which the associated functional is coercive and no local minima structure is present if $\sigma N >2$. C. A. Stuart analysis relies on the control of the ground state level by the use of suitable test functions. We conjecture that such bifurcation phenomenon will take place for~\eqref{eq:free} not only when $\sigma N <4$, under 
the conditions ensuring the existence of a global minimizer for $m(c)$ when $c>0$ is small, but also when $\sigma N >4$ at least when $\sigma \in (\frac{4}{N}, 1)$. In this second case our conjecture is supported by what was observed in~\cite{BeBoJeVi}.

\bigskip

\noindent \textbf{\emph{Acknowledgements}} : The authors thank very warmly Mihai Mari\c{s}. Without several of his key remarks the work would not have been the same. Part of this work was done during a visit of the second author to the Universit\'e Paul Sabatier (Toulouse). He thanks his hosts for the warm hospitality and the financial support.

\bibliographystyle{plain}
\bibliography{Bibliography}
\vspace{0.25cm}
\end{document}